\renewcommand{\PrintDOI}[1]{\href{http://dx.doi.org/\detokenize{#1}}{doi: \detokenize{#1}}}
\setlist[enumerate,1]{label=\textup{(\arabic*)}}
\tikzset{node distance=2cm, auto}
\tikzset{cd/.style=matrix of math nodes,row sep=2em,column sep=2em, text height=1.5ex, text depth=0.5ex}
\tikzset{cdar/.style=->,auto}
\tikzset{mid/.style={anchor=mid}} 
\tikzset{narrowfill/.style={inner sep=1pt, fill=white}}
\numberwithin{equation}{section}
\theoremstyle{plain}
\newtheorem{theorem}[equation]{Theorem}
\newtheorem{lemma}[equation]{Lemma}
\newtheorem{proposition}[equation]{Proposition}
\newtheorem{corollary}[equation]{Corollary}
\theoremstyle{definition}
\newtheorem{definition}[equation]{Definition}
\theoremstyle{remark}
\newtheorem{remark}[equation]{Remark}
\newtheorem{example}[equation]{Example}
\newtheorem{notation}[equation]{Notation}
\DeclareMathOperator{\Ind}{Ind}
\DeclareMathOperator{\Aut}{Aut}
\DeclareMathOperator{\cspn}{\overline{span}}
\DeclareMathOperator{\Prim}{Prim}
\newcommand*{\nb}{\nobreakdash}
\newcommand*{\Star}{\(^*\)\nobreakdash-}
\newcommand*{\C}{\mathbb C}
\newcommand*{\Z}{\mathbb Z}
\newcommand*{\N}{\mathbb N}
\newcommand*{\Bound}{\mathbb B}
\newcommand*{\Comp}{\mathbb K}
\newcommand*{\Mat}{\mathbb M}
\newcommand*{\red}{\mathrm r}
\newcommand*{\Cred}{\mathrm{C^*_r}} 
\newcommand*{\alg}{\mathrm{alg}} 
\newcommand*{\Sect}{\mathfrak S}
\newcommand*{\cstar}{\texorpdfstring{\(C^*\)\nobreakdash-\hspace{0pt}}{*-}}
\newcommand*{\Cst}{\textup C^*}
\newcommand*{\Wst}{\textup W^*}
\newcommand*{\Mult}{\mathcal M}
\newcommand*{\Cont}{\textup C}
\newcommand*{\Contb}{\textup C_\textup b}
\newcommand*{\Contc}{\Cont_\textup c} 
\newcommand*{\Borel}{\textup B}
\newcommand*{\prid}{\mathfrak{p}} 
\newcommand*{\bid}[1]{#1''}
\newcommand*{\barotimes}{\mathbin{\bar\otimes}}
\newcommand*{\barrtimes}{\mathbin{\bar\rtimes}}
\newcommand{\idealin}{\mathrel{\triangleleft}} 
\newcommand*{\Id}{\textup{Id}}
\newcommand*{\Ad}{\textup{Ad}}
\newcommand*{\Hils}{\mathcal H}
\newcommand*{\Hilm}[1][H]{\mathcal #1}
\newcommand*{\B}{\mathcal B}
\newcommand*{\defeq}{\mathrel{\vcentcolon=}}
\newcommand*{\congto}{\xrightarrow\sim}
\newcommand*{\norm}[1]{\lVert#1\rVert}
\newcommand*{\abs}[1]{\lvert#1\rvert}
\newcommand*{\braket}[2]{\langle#1{\mid}#2\rangle}
\newcommand*{\BRAKET}[2]{\langle\!\langle#1{\mid}#2\rangle\!\rangle}
\newcommand*{\s}{s}
\newcommand*{\rg}{r}
\DeclareMathOperator*{\stlim}{s-lim}
\newcommand*{\into}{\rightarrowtail}
\newcommand*{\onto}{\twoheadrightarrow}
\begin{document}
\title[Reduced C*-algebras of Fell bundles over inverse semigroups]{Reduced C*-algebras of Fell bundles\\over inverse semigroups}

\author{Alcides Buss}
\email{alcides@mtm.ufsc.br}

\author{Ruy Exel}
\email{exel@mtm.ufsc.br}

\address{Departamento de Matem\'atica\\
 Universidade Federal de Santa Catarina\\
 88.040-900 Florian\'opolis-SC\\
 Brazil}

\author{Ralf Meyer}
\email{rmeyer2@uni-goettingen.de}

\address{Mathematisches Institut\\
 Georg-August-Universit\"at G\"ottingen\\
 Bunsenstra\ss e 3--5\\
 37073 G\"ottingen\\
 Germany}

\begin{abstract}
  We construct a weak conditional expectation from the section
  C*\nb-algebra of a Fell bundle over a unital inverse semigroup to
  its unit fibre.  We use this to define the reduced C*-algebra of the
  Fell bundle.  We study when the reduced C*-algebra for an inverse
  semigroup action on a groupoid by partial equivalences coincides
  with the reduced groupoid C*-algebra of the transformation groupoid,
  giving both positive results and counterexamples.
\end{abstract}
\subjclass[2010]{46L55, 20M18, 22A22}
\thanks{Supported by CNPq/CsF (Brazil).}
\maketitle

\section{Introduction}
\label{sec:introduction}

Let~\(S\)
be a unital inverse semigroup.  It may act on a space~\(X\)
by partial homeomorphisms, that is, homeomorphisms \(U\congto V\)
for open subsets \(U,V\)
in~\(X\).
This induces an \(S\)\nb-action
on the \(\Cst\)\nb-algebra~\(\Cont_0(X)\)
by partial isomorphisms, that is, isomorphisms between ideals.  We may
describe the \(S\)\nb-action
on~\(X\)
through a transformation groupoid \(X\rtimes S\),
which is an étale, locally compact groupoid, possibly non-Hausdorff.
The full inverse semigroup crossed product \(\Cont_0(X)\rtimes S\)
is canonically isomorphic to the full groupoid \(\Cst\)\nb-algebra
\(\Cst(X\rtimes S)\)
(see~\cite{Exel:Inverse_combinatorial}*{Theorem~8.5}
or~\cite{Buss-Meyer:Actions_groupoids}*{Corollary~5.6}). There is an
analogous isomorphism \(\Cont_0(X)\rtimes_\red
S\cong \Cst_\red(X\rtimes S)\) for reduced
crossed products, which follows from
\cite{BussExel:Fell.Bundle.and.Twisted.Groupoids}*{Theorem~4.11} or
from one of our main results (see
Corollary~\ref{cor:exact=>iterated-crossed-product}).  Here we
are going to consider more general versions of these reduced crossed
product decompositions, allowing a groupoid instead of the space~\(X\).

The notion of an action of~\(S\)
on a locally compact groupoid~\(G\)
by partial equivalences is defined
in~\cite{Buss-Meyer:Actions_groupoids}.  Such an action also has a
transformation groupoid~\(G\rtimes S\),
and it induces an \(S\)\nb-action
on~\(\Cst(G)\)
by ``partial Morita--Rieffel equivalences.''  A partial
Morita--Rieffel equivalence is the same as a Hilbert bimodule (not
necessarily full), so we speak of actions by Hilbert bimodules from
now on.

Actions of~\(S\)
on \(\Cst\)\nb-algebras
by Hilbert bimodules are equivalent to saturated Fell bundles~\((A_t)_{t\in S}\)
over~\(S\).
Here the unit fibre~\(A\defeq A_1\)
is the \(\Cst\)\nb-algebra
on which the action takes place.  The other fibres~\(A_t\)
are Hilbert bimodules over~\(A\)
which, together with the multiplication maps
\(A_t\otimes_{A} A_u \to A_{tu}\),
describe the action of~\(S\).
The full section \(\Cst\)\nb-algebra \(C^*((A_t)_{t\in S})\)
of the Fell bundle plays the role of the full crossed product for the
action and is also denoted by \(A\rtimes S\).

Let~\(S\)
act on a locally compact groupoid~\(G\) by partial equivalences as in~\cite{Buss-Meyer:Actions_groupoids}.
The full section \(\Cst\)\nb-algebra
of the Fell bundle over~\(S\)
that describes the induced action on~\(\Cst(G)\)
is identified in~\cite{Buss-Meyer:Actions_groupoids} with
\(\Cst(G\rtimes S)\),
the groupoid \(\Cst\)\nb-algebra
of the transformation groupoid.  Briefly,
\[
\Cst(G)\rtimes S\cong \Cst(G\rtimes S).
\]
Is there a version of this for reduced \(\Cst\)\nb-algebras?

The reduced \(\Cst\)\nb-algebra \(A\rtimes_\red S\defeq
C^*_\red((A_t)_{t\in S})\)
of a Fell bundle over an inverse semigroup is defined
in~\cite{Exel:noncomm.cartan}, and should be the analogue of the
reduced crossed product for an action of a group or groupoid.  The
idea is to induce representations of~\(A\)
to representations of \(A\rtimes S\)
and use only these induced representations to define the
\(\Cst\)\nb-norm
for \(A\rtimes_\red S\).
If~\(S\) is a group, the induction functor comes from a conditional
expectation \(E\colon A\rtimes S\to A\).
Similarly, such a conditional expectation describes induction of
representations for actions of a
\emph{Hausdorff}, locally compact groupoid.  For a
non-Hausdorff groupoid, however, the conditional
expectation takes values in a larger algebra, where
we adjoin certain central projections in the
enveloping \(\Wst\)\nb-algebra~\(\bid A\).
For the reduced groupoid \(\Cst\)\nb-algebra
\(\Cred(G)\),
this is worked out in~\cite{Khoshkam-Skandalis:Regular}.  The
representation of \(\Cred(G) = \Cont_0(G^0)\rtimes_\red G\)
obtained by inducing a faithful representation of~\(\Cont_0(G^0)\)
need not be faithful any more, unlike in the Hausdorff case.

Inverse semigroup actions behave in many ways like actions of étale
groupoids that are possibly non-Hausdorff, so similar problems occur.
In~\cite{Exel:noncomm.cartan}, induction is only defined for
irreducible representations of the coefficient algebra~\(A\);
these induced representations are used to define the reduced crossed
product~\(A\rtimes_\red S\).
When \(A=\Cred(G)\)
for a locally compact groupoid, it is much more convenient to work
with the family of regular representations of~\(G\)
on \(L^2(G^x,\lambda^x)\).
Thus, to compare the reduced crossed product
\(\Cred(G)\rtimes_\red S\)
with \(\Cred(G\rtimes S)\)
for an inverse semigroup action on a groupoid, we want to extend the
induction process in~\cite{Exel:noncomm.cartan} to arbitrary
representations of~\(A\).

We do this by constructing a \emph{weak} conditional expectation~\(E\)
from~\(A\rtimes S\)
to the bidual \(\bid{A}\supseteq A\).
This produces a \(\Cst\)\nb-correspondence
from~\(A\rtimes S\)
to~\(\bid{A}\).
Any representation of~\(A\)
extends uniquely to a normal representation of~\(\bid{A}\),
which we may tensor with the \(\Cst\)\nb-correspondence
to get a representation of~\(A\rtimes S\).
This is the \emph{induction functor} from the category of
representations of~\(A\)
to that of~\(A\rtimes S\).
We let \(A\rtimes_\red S\)
be the quotient of~\(A\rtimes S\)
that is defined by the \(\Cst\)\nb-seminorm
coming from the family of all induced representations; equivalently,
we may induce the universal representation of~\(A\),
which gives a faithful representation of~\(\bid{A}\).

Inducing the regular representation~\(\Lambda_x\)
of~\(\Cst(G)\)
on~\(L^2(G^x)\)
to \(\Cst(G)\rtimes S\cong \Cst(G\rtimes S)\)
gives the regular representation of~\(G\rtimes S\)
on~\(L^2((G\rtimes S)^x)\).
Hence \(\Cred(G\rtimes S)\)
is the image of \(\Cst(G\rtimes S)\)
under the induced representation of \(\bigoplus_{x\in G^0} \Lambda^x\).
This always gives a representation of \(\Cred(G)\rtimes_\red S\),
so there is a quotient map
\(\Cred(G)\rtimes_\red S\to \Cred(G\rtimes S)\).
We give examples where this representation of \(\Cred(G)\rtimes_\red
S\) is not faithful, that is,
\(\Cred(G)\rtimes_\red S\neq \Cred(G\rtimes S)\).

We prove that a representation~\(\pi\) of~\(A\) induces a faithful
representation of~\(A\rtimes_\red S\) if the canonical extension
of~\(\pi\) to the \(\Cst\)\nb-subalgebra of~\(\bid{A}\) generated by
the image of the weak conditional expectation \(E\colon A\rtimes
S\to \bid{A}\) remains faithful.  As a consequence, our new
definition of~\(A\rtimes_\red S\) using \emph{all} induced
representations is equivalent to the original definition
in~\cite{Exel:noncomm.cartan}.  And \(\Cred(G)\rtimes_\red S\cong
\Cred(G\rtimes S)\) if~\(G\) is closed in~\(G\rtimes S\) or if~\(G\)
is ``inner exact''; this exactness property has been studied
recently in~\cite{AnantharamanDelaroch:Weak_containment}.  We also
prove \(C^*_\red(G,\B)\rtimes_\red S\cong C^*_\red(G\rtimes S,\B)\)
for a Fell bundle~\(\B\) over~\(G\rtimes S\) under similar
conditions.

\section{Inverse semigroup actions on C*-algebras}
\label{sec:act_Cstar-Wstar}

Let~\(S\) be a unital inverse semigroup.  That is, \(S\) is a monoid
and for each \(t\in S\) there is a unique \(t^*\in S\) with \(t t^*
t = t\) and \(t^* t t^* = t^*\).  The map \(t\mapsto t^*\) is
involutive and satisfies \((t u)^* = u^* t^*\).  An element~\(e\)
of~\(S\) is \emph{idempotent} if \(e^2 = e\).  The following results
on inverse semigroups are proved, for instance,
in~\cite{Lawson:InverseSemigroups}.  Idempotent elements satisfy
\(e=e^*\) and commute with each other.  Any inverse semigroup is
partially ordered by \(t\le u\) if there is an idempotent element
\(e\in S\) with \(t = u e\) or, equivalently, if there is an
idempotent element \(e\in S\) with \(t = e u\).  This happens for
some \(e \in S\) if and only if \(t = u t^* t\), if and only if \(t
= t t^* u\).

A \emph{Hilbert \(A,B\)-bimodule} is an \(A,B\)-bimodule~\(\Hilm\)
with a left, \(A\)\nb-valued inner product \(\BRAKET{\xi}{\eta}\)
and a right, \(B\)\nb-valued inner product \(\braket{\xi}{\eta}\)
for \(\xi,\eta\in\Hilm\), so that~\(\Hilm\) is a right Hilbert
\(B\)\nb-module and a left Hilbert \(A\)\nb-module, and the two
inner products are linked by \(\zeta\cdot \braket{\xi}{\eta} =
\BRAKET{\zeta}{\xi}\cdot\eta\) for all \(\zeta,\xi,\eta\in\Hilm\).
Hilbert bimodules are interpreted
in~\cite{Buss-Meyer:Actions_groupoids} as partial Morita--Rieffel
equivalences.  We write~\(\Hilm^*\) for the Hilbert \(B,A\)-module
associated to a Hilbert \(A,B\)-module~\(\Hilm\) by exchanging the
left and right structure.  If \(\xi\in\Hilm\), we denote the
corresponding element of~\(\Hilm^*\) by \(\xi^*\in\Hilm^*\).  Let
\(\rg(\Hilm)\) and \(\s(\Hilm)\) be the ideals in~\(A\) generated by
the left and right inner products of vectors in~\(\Hilm\),
respectively.  Thus~\(\Hilm\) is a
\(\rg(\Hilm),\s(\Hilm)\)-imprimitivity bimodule.

\begin{definition}[\cite{Buss-Meyer:Actions_groupoids}]
  \label{def:S_action_Cstar}
  An \emph{action} of~\(S\)
  on a \(\Cst\)\nb-algebra~\(A\)
  \emph{by Hilbert bimodules} consists of Hilbert
  \(A\)\nb-bimodules~\(\Hilm_t\)
  for \(t\in S\)
  and Hilbert bimodule isomorphisms
  \(\mu_{t,u}\colon \Hilm_t\otimes_A \Hilm_u\congto \Hilm_{tu}\)
  for \(t,u\in S\), such that
  \begin{enumerate}[label=\textup{(A\arabic*)}]
  \item \label{enum:AHB3} for all \(t,u,v\in S\),
    the following diagram commutes (associativity):
    \[
    \begin{tikzpicture}[baseline=(current bounding box.west)]
      \node (1) at (0,1) {\((\Hilm_t\otimes_A \Hilm_u) \otimes_A \Hilm_v\)};
      \node (1a) at (0,0) {\(\Hilm_t\otimes_A (\Hilm_u \otimes_A \Hilm_v)\)};
      \node (2) at (5,1) {\(\Hilm_{tu} \otimes_A \Hilm_v\)};
      \node (3) at (5,0) {\(\Hilm_t\otimes_A \Hilm_{uv}\)};
      \node (4) at (7,.5) {\(\Hilm_{tuv}\)};
      \draw[<->] (1) -- node[swap] {ass} (1a);
      \draw[cdar] (1) -- node {\(\mu_{t,u}\otimes_A \Id_{\Hilm_v}\)} (2);
      \draw[cdar] (1a) -- node[swap] {\(\Id_{\Hilm_t}\otimes_A\mu_{u,v}\)} (3);
      \draw[cdar] (3.east) -- node[swap] {\(\mu_{t,uv}\)} (4);
      \draw[cdar] (2.east) -- node {\(\mu_{tu,v}\)} (4);
    \end{tikzpicture}
    \]
  \item \label{enum:AHB1} \(\Hilm_1\)
    is the identity Hilbert \(A,A\)-bimodule~\(A\);
  \item \label{enum:AHB2} \(\mu_{t,1}\colon \Hilm_t\otimes_A
    A\congto \Hilm_t\) and \(\mu_{1,t}\colon A\otimes_A
    \Hilm_t\congto \Hilm_t\) for \(t\in S\), are the maps defined by
    \(\mu_{1,t}(a\otimes\xi)=a\cdot\xi\) and \(\mu_{t,1}(\xi\otimes
    a) = \xi\cdot a\) for \(a\in A\), \(\xi\in\Hilm_t\).
  \end{enumerate}
  If~\(S\)
  has no unit, then we define an \(S\)\nb-action
  by the same data, subject only to condition~\ref{enum:AHB3}.
\end{definition}

\begin{remark}
  \label{rem:extend_action_unit}
  Let~\(S\)
  be an inverse semigroup, possibly without unit.  Let~\(S^+\)
  be the inverse semigroup obtained by adding a new unit element~\(1\)
  to~\(S\).
  An action of~\(S\)
  satisfying only~\ref{enum:AHB3} extends uniquely to~\(S^+\)
  by choosing \(\Hilm_1\defeq A\)
  and letting \(\mu_{1,t}\)
  and~\(\mu_{t,1}\)
  be the multiplication isomorphisms.  This automatically
  satisfies~\ref{enum:AHB3} if \(1\in \{t,u,v\}\),
  so it gives an action of~\(S^+\)
  satisfying \ref{enum:AHB3}--\ref{enum:AHB2}.
\end{remark}

\begin{example}
  \label{exa:partial_actions}
  An \emph{action} of~\(S\) on a \(\Cst\)\nb-algebra~\(A\) \emph{by
    partial isomorphisms} is given by ideals \(I_e\idealin A\) for
  idempotent \(e\in S\) and \Star{}isomorphisms \(\alpha_t\colon
  I_{t^* t} \congto I_{t t^*}\) for \(t\in S\) such that \(\alpha_{t
    u}(a) = \alpha_t\circ\alpha_u(a)\) for all \(t,u \in S\); this
  includes the requirement that \(\alpha_{t u}(a)\) is defined if and
  only if \(\alpha_t\circ\alpha_u(a)\) is defined, that is, \(I_{t
    u}\) is the set of all \(a\in I_{u^* u}\) with \(\alpha_u(a)\in
  I_{t^* t}\).  This gives an action by Hilbert bimodules as in
  Definition~\ref{def:S_action_Cstar}.  Namely, let \(\Hilm_t \defeq
  I_{t^* t}\) with the bimodule structure \(a\cdot \xi\cdot b \defeq
  \alpha_t^{-1}(a)\cdot \xi\cdot b\) and the inner products
  \(\BRAKET{\xi_1}{\xi_2} = \alpha_t(\xi_1 \xi_2^*)\) and
  \(\braket{\xi_1}{\xi_2} = \xi_1^* \xi_2\).  This is indeed a Hilbert
  bimodule with \(\rg(\Hilm)= I_{t t^*}\) and \(\s(\Hilm)= I_{t^*
    t}\).  There are well defined Hilbert bimodule isomorphisms
  \[
  \mu_{t,u}\colon \Hilm_t \otimes_A \Hilm_u \congto \Hilm_{t u},
  \qquad
  \xi\otimes\eta\mapsto \alpha_u^{-1}\bigl(\xi\cdot \alpha_u(\eta)\bigr),
  \]
  because
  \begin{align*}
    \braket{\alpha_u^{-1}(\xi_1 \cdot \alpha_u\eta_1)}
    {\alpha_u^{-1}(\xi_2 \cdot \alpha_u\eta_2)}_{\Hilm_{t u}}
    &= \alpha_u^{-1}(\alpha_u(\eta_1^*) \xi_1^*\xi_2 \alpha_u(\eta_2))
    \\&= \braket{\eta_1}{\braket{\xi_1}{\xi_2}_{\Hilm_t}\cdot\eta_2}_{\Hilm_u},\\
    \BRAKET{\alpha_u^{-1}(\xi_1 \cdot \alpha_u\eta_1)}
    {\alpha_u^{-1}(\xi_2 \cdot \alpha_u\eta_2)}_{\Hilm_{t u}}
    &=
    \alpha_{t u}\bigl(\alpha_u^{-1}(\xi_1 \alpha_u(\eta_1))
    \alpha_u^{-1}(\xi_2 \alpha_u(\eta_2))^*\bigr)
    \\&= \alpha_t(\xi_1 \alpha_u(\eta_1 \eta_2^*) \xi_2^*)
    = \BRAKET{\xi_1\cdot\BRAKET{\eta_1}{\eta_2}_{\Hilm_t}}{\xi_2}_{\Hilm_u}.
  \end{align*}
  This is an action by Hilbert bimodules.  Hence these actions
  generalise actions by isomorphisms.
  Example~\ref{exa:graph_example} shows an action by Hilbert modules
  not of this form.
\end{example}

Actions of~\(S\) on \(\Cst\)\nb-algebras by Hilbert bimodules are
shown in~\cite{Buss-Meyer:Actions_groupoids} to be equivalent to saturated
Fell bundles over~\(S\) as defined in~\cite{Exel:noncomm.cartan}.
Exel's definition of a (saturated) Fell bundle in~\cite{Exel:noncomm.cartan}
starts with a collection of Banach spaces \((\Hilm_t)_{t\in S}\)
and requires the existence of multiplications (with linearly dense range)
\(\Hilm_t\times \Hilm_u\to \Hilm_{tu}\) and involutions \(\Hilm_t\to \Hilm_{t^*}\) for
all \(t\in S\)  and, in addition, inclusion maps \(j_{u,t}\colon
\Hilm_t\hookrightarrow\Hilm_u\) for all \(t,u\in S\) with \(t \le
u\) satisfying a bunch of conditions. The multiplications and involutions can be used to view each \(\Hilm_t\) as a Hilbert bimodule over \(A=\Hilm_1\), and then this data gives an action of $S$ on $A$ in our sense. In the setting of the present paper the existence and properties of the inclusion maps \(j_{u,t}\) in ~\cite{Exel:noncomm.cartan} follow automatically from
\cite{Buss-Meyer:Actions_groupoids}*{Theorem 4.8}.  We recall
briefly how to construct the inclusion maps \(j_{u,t}\) and the involutions, now viewed as isomorphisms of Hilbert bimodules~\(J_t\colon \Hilm_t^*\congto \Hilm_{t^*}\), from the data in Definition~\ref{def:S_action_Cstar}.

If \(t\le u\), there is an idempotent element \(e\in S\) with \(t =
u e\).  Since~\(e\) is idempotent, there is a unique isomorphism
between the Hilbert bimodule~\(\Hilm_e\) and an ideal in~\(A\) so
that the multiplication map \(\mu_{e,e}\colon \Hilm_e\otimes_A
\Hilm_e \to \Hilm_e\) becomes the usual multiplication in~\(A\) (see
\cite{Buss-Meyer:Actions_groupoids}*{Proposition 4.6}).  The
inclusion map~\(j_{u,t}\) is the composite map
\[
\Hilm_t \xleftarrow[\cong]{\mu_{u,e}} \Hilm_u \otimes_A \Hilm_e
\hookrightarrow \Hilm_u \otimes_A A
\cong \Hilm_u,
\]
where the last map is the multiplication map in the right
\(A\)\nb-module~\(\Hilm_u\).

For each \(\xi\in\Hilm_t\) there is a unique element \(J_t(\xi^*)\in
\Hilm_{t^*}\) with \(\mu_{t^*,t}(J_t(\xi^*)\otimes
\eta)=\braket{\xi}{\eta}\) for all \(\eta\in \Hilm_t\); this defines
the involutions~\(J_t\), see
\cite{Buss-Meyer:Actions_groupoids}*{Theorem 4.8}.

We shall need a
stronger result about the ``intersection'' of \(\Hilm_t\)
and~\(\Hilm_u\)
for \(t,u\in S\).  We have
\[
\s(\Hilm_t) = \s(\Hilm_{t^*t}) = \rg(\Hilm_{t^*t}) = \rg(\Hilm_{t^*}).
\]
If \(v\le t\),
then the inclusion map~\(j_{t,v}\)
is a Hilbert bimodule \emph{isomorphism}
\[
\rg(\Hilm_v) \cdot \Hilm_t = \Hilm_t\cdot \s(\Hilm_v) \cong \Hilm_v
\]
because \(\Hilm' = \Hilm\cdot \s(\Hilm') = \rg(\Hilm')\cdot \Hilm\)
holds whenever~\(\Hilm'\) is a Hilbert bimodule contained in another
Hilbert bimodule~\(\Hilm\) by
\cite{Buss-Meyer:Actions_groupoids}*{Proposition 4.3}.

Hence we get Hilbert bimodule isomorphisms
\begin{equation}
  \label{eq:Def-thetas}
  \theta^v_{u,t}\colon
  \Hilm_t\cdot \s(\Hilm_v) \xleftarrow[\cong]{j_{t,v}} \Hilm_v
  \xrightarrow[\cong]{j_{u,v}} \Hilm_u\cdot \s(\Hilm_v)
\end{equation}
for all \(v,t,u\in S\) with \(v\le t,u\).
Let \(I_{t,u}\idealin A\)
be the (closed) ideal generated by~\(\s(\Hilm_v)\)
for all \(v\le t,u\).
This is contained in \(\s(\Hilm_t)\cap\s(\Hilm_u)\),
and the inclusion may be strict.

\begin{lemma}
  \label{lem:intersect_Hilm_tu}
  There is a unique Hilbert bimodule isomorphism
  \[
  \theta_{u,t}\colon \Hilm_t\cdot I_{t,u} \congto \Hilm_u\cdot I_{t,u}
  \]
  that restricts to~\(\theta_{u,t}^v\) on \(\Hilm_t\cdot
  \s(\Hilm_v)\) for all \(v\le t,u\).  These maps satisfy
  \(\theta_{u,t}^{-1} = \theta_{t,u}\) for all \(t,u\in S\) and
  \(\theta_{w,u}(\xi) = \theta_{w,t}\theta_{t,u}(\xi)\) for all
  \(t,u,w\in S\) and \(\xi\in \Hilm_u\cdot (I_{t,u}\cap I_{w,u})\).
\end{lemma}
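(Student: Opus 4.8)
The plan is to obtain \(\theta_{u,t}\) by gluing the isomorphisms \(\theta^v_{u,t}\) of~\eqref{eq:Def-thetas}, \(v\le t,u\). Since each \(\s(\Hilm_v)\) with \(v\le t,u\) lies in \(I_{t,u}\) and \(I_{t,u}=\overline{\sum_{v\le t,u}\s(\Hilm_v)}\), the algebraic sum \(D\defeq\sum_{v\le t,u}\Hilm_t\cdot\s(\Hilm_v)\) is dense in \(\Hilm_t\cdot I_{t,u}\); this forces uniqueness of \(\theta_{u,t}\), and likewise \(\sum_{v\le t,u}\Hilm_u\cdot\s(\Hilm_v)\) is dense in \(\Hilm_u\cdot I_{t,u}\).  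For existence I must check that the \(\theta^v_{u,t}\) are pairwise compatible on overlaps and then assemble to a bounded \emph{linear} map on~\(D\).

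For compatibility I would first prove: if \(v\le t,u\) and \(w\le v\), then \(w\le t,u\) and \(\theta^w_{u,t}\) is the restriction of \(\theta^v_{u,t}\) to \(\Hilm_t\cdot\s(\Hilm_w)\).  This uses only the transitivity of the inclusion maps, \(j_{t,w}=j_{t,v}\circ j_{v,w}\) and \(j_{u,w}=j_{u,v}\circ j_{v,w}\) (\cite{Buss-Meyer:Actions_groupoids}*{Theorem 4.8}): as \(j_{v,w}\) is an isomorphism onto \(\Hilm_v\cdot\s(\Hilm_w)\) while \(j_{t,v}^{-1}\) maps \(\Hilm_t\cdot\s(\Hilm_w)\subseteq\Hilm_t\cdot\s(\Hilm_v)\) onto \(\Hilm_v\cdot\s(\Hilm_w)\), composing gives \(\theta^w_{u,t}=j_{u,w}\circ j_{t,w}^{-1}=j_{u,v}\circ j_{t,v}^{-1}=\theta^v_{u,t}\) on \(\Hilm_t\cdot\s(\Hilm_w)\).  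Given now \(v_1,v_2\le t,u\), put \(w\defeq v_1\cdot(v_2^*v_2)\); using \(v_i=t\cdot(v_i^*v_i)\) this also equals \(v_2\cdot(v_1^*v_1)\), so \(w\le v_1,v_2\), and \(w^*w=(v_1^*v_1)(v_2^*v_2)\), whence \(\s(\Hilm_w)=\s(\Hilm_{v_1})\cap\s(\Hilm_{v_2})\) because \(\mu_{e,f}\) identifies \(\Hilm_{ef}\) with \(\s(\Hilm_e)\cdot\s(\Hilm_f)=\s(\Hilm_e)\cap\s(\Hilm_f)\) for idempotents \(e,f\) (\cite{Buss-Meyer:Actions_groupoids}*{Proposition 4.6}).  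Since \(\Hilm_t\cdot\s(\Hilm_{v_1})\cap\Hilm_t\cdot\s(\Hilm_{v_2})=\Hilm_t\cdot\s(\Hilm_w)\) and, by the first claim, both \(\theta^{v_1}_{u,t}\) and \(\theta^{v_2}_{u,t}\) restrict to \(\theta^w_{u,t}\) there, the two maps agree on their common domain.

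The step I expect to be the real obstacle is the gluing, since pairwise-compatible isometries need not patch into a linear map over finite sums.  I would get around this by proving \(\braket{\theta^{v_1}_{u,t}(\xi_1)}{\theta^{v_2}_{u,t}(\xi_2)}=\braket{\xi_1}{\xi_2}\) for all \(\xi_1\in\Hilm_t\cdot\s(\Hilm_{v_1})\), \(\xi_2\in\Hilm_t\cdot\s(\Hilm_{v_2})\): granting this, whenever \(\sum_i\xi_i=0\) with \(\xi_i\in\Hilm_t\cdot\s(\Hilm_{v_i})\) we get \(\norm{\sum_i\theta^{v_i}_{u,t}(\xi_i)}^2=\norm{\sum_{i,j}\braket{\xi_i}{\xi_j}}=\norm{\sum_i\xi_i}^2=0\), so \(\sum_i\xi_i\mapsto\sum_i\theta^{v_i}_{u,t}(\xi_i)\) is a well-defined isometry on~\(D\).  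To prove the inner-product identity, take \(w\) the common lower bound of \(v_1,v_2\) built above; both sides lie in \(\s(\Hilm_{v_1})\cap\s(\Hilm_{v_2})=\s(\Hilm_w)\), so inserting an approximate unit of \(\s(\Hilm_w)\) lets one recompute them after moving the vectors into the common domain \(\Hilm_t\cdot\s(\Hilm_w)\), on which \(\theta^{v_1}_{u,t}\), \(\theta^{v_2}_{u,t}\) and \(\theta^w_{u,t}\) agree and the last preserves inner products.  The resulting isometry is \(A\)\nb-bilinear and preserves right inner products because each \(\theta^v_{u,t}\) is; it preserves the left inner products by the linking relation \(\BRAKET{\xi}{\eta}\cdot\zeta=\xi\cdot\braket{\eta}{\zeta}\) together with surjectivity onto \(\sum_v\Hilm_u\cdot\s(\Hilm_v)\).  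Extending by continuity yields the Hilbert bimodule isomorphism \(\theta_{u,t}\colon\Hilm_t\cdot I_{t,u}\congto\Hilm_u\cdot I_{t,u}\).

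It remains to verify the relations.  From~\eqref{eq:Def-thetas} (and \(I_{t,u}=I_{u,t}\)) one reads off \(\theta^v_{u,t}\circ\theta^v_{t,u}=\Id\) and \(\theta^v_{t,u}\circ\theta^v_{u,t}=\Id\) on \(\Hilm_u\cdot\s(\Hilm_v)\) and \(\Hilm_t\cdot\s(\Hilm_v)\) respectively, and density upgrades this to \(\theta_{u,t}^{-1}=\theta_{t,u}\).  For the cocycle relation, distributivity of the lattice of closed ideals of~\(A\) (via open subsets of \(\Prim A\)) gives \(I_{t,u}\cap I_{w,u}=\overline{\sum\{\s(\Hilm_v):v\le t,u,w\}}\subseteq I_{w,t}\); hence \(\theta_{t,u}\) maps \(\Hilm_u\cdot(I_{t,u}\cap I_{w,u})\) into \(\Hilm_t\cdot(I_{t,u}\cap I_{w,u})\subseteq\Hilm_t\cdot I_{w,t}\) and \(\theta_{w,t}\circ\theta_{t,u}\) is defined on \(\Hilm_u\cdot(I_{t,u}\cap I_{w,u})\).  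On \(\Hilm_u\cdot\s(\Hilm_v)\) with \(v\le t,u,w\), both \(\theta_{w,u}\) and \(\theta_{w,t}\circ\theta_{t,u}\) restrict, by~\eqref{eq:Def-thetas}, to \(j_{w,v}\circ j_{u,v}^{-1}=\theta^v_{w,u}\); since such modules span a dense subspace of \(\Hilm_u\cdot(I_{t,u}\cap I_{w,u})\), the two composites coincide.
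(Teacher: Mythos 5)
Your proposal is correct and follows essentially the same strategy as the paper: define \(\theta_{u,t}\) on the dense algebraic sum \(\sum_{v\le t,u}\Hilm_t\cdot\s(\Hilm_v)\), show it preserves inner products to get well-definedness and isometry, extend by continuity, obtain the inverse by symmetry, and verify the cocycle identity on the submodules \(\Hilm_u\cdot\s(\Hilm_v)\) with \(v\le t,u,w\) and conclude by density. The only notable difference is in the inner-product computation, where you reduce to the common lower bound \(w=v_1(v_2^*v_2)\) and use approximate units of \(\s(\Hilm_w)\), whereas the paper factorises \(\xi\cdot a_v=j_{t,v}(a_v'\cdot\xi_v')\) via Cohen--Hewitt and manipulates the inclusion maps directly; both verifications are sound.
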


\begin{proof}
  Linear combinations \(\sum_{v\le t,u} a_v\)
  with \(a_v\in \s(\Hilm_v)\)
  for all~\(v\)
  and only finitely many non-zero~\(a_v\)
  are dense in~\(I_{t,u}\).  We want to define
  \begin{equation}
    \label{eq:theta_tu}
    \theta_{u,t} \Bigl(\xi\cdot \sum_{v\le t,u} a_v\Bigr) \defeq
    \sum_{v\le t,u} \theta^v_{u,t} (\xi \cdot a_v)
  \end{equation}
  for \(\xi\in\Hilm_t\),
  \(a_v\in \s(\Hilm_v)\).
  To check that this is well-defined, we first show that inner
  products are preserved.  The left \(\rg(\Hilm_v)\)-module
  \(\Hilm_t\cdot \s(\Hilm_v)\)
  is nondegenerate because
  \(\Hilm_t\cdot \s(\Hilm_v) = \Hilm_v = \rg(\Hilm_v)\cdot \Hilm_v\).
  Hence we may write \(\xi \cdot a_v = j_{t,v}(a'_v\cdot \xi'_v)\)
  for certain \(a'_v\in \rg(\Hilm_v)\),
  \(\xi'_v\in \Hilm_v\),
  by the Cohen--Hewitt Factorisation Theorem.  For another linear
  combination \(\sum \eta \cdot b_w\)
  with \(\eta\in \Hilm_t\),
  \(b_w\in\s(\Hilm_w)\), and finitely many \(w\le t,u\), we compute
  \begin{multline*}
    \biggl< \sum_{v\le u,t} \theta^v_{u,t} (\xi\cdot a_v) \bigg|
      \sum_{w\le t,u} \theta^w_{u,t} (\eta\cdot b_w) \biggr>_{\Hilm_u}
    = \sum_{v,w\le t,u} \braket{\theta^v_{u,t}j_{t,v}(a_v'\cdot \xi'_v)}
    {\theta^w_{u,t} (\eta\cdot b_w)}_{\Hilm_u}
    \\= \sum_{v,w\le t,u} \braket{j_{u,v}(\xi'_v)}
    {(a_v')^*\cdot \theta^w_{u,t} (\eta\cdot b_w)}_{\Hilm_u}
    = \sum_{v,w\le t,u} \bigl< \xi'_v \big|
    j_{t,v}^{-1}\bigl((a_v')^*\cdot \eta\cdot b_w\bigr)\bigr>_{\Hilm_v}
    \\= \sum_{v,w\le t,u} \braket{j_{t,v}(a'_v\cdot \xi'_v)}
    {\eta\cdot b_w}_{\Hilm_t}
    = \biggl< \sum_{v\le t,u} \xi\cdot a_v \bigg|
      \sum_{w\le t,u} \eta\cdot b_w \biggr>_{\Hilm_t}.
  \end{multline*}
  Since an element of a Hilbert module is determined uniquely by its
  inner products with other elements of the same Hilbert module, the
  right hand side in~\eqref{eq:theta_tu} does not
  depend on the chosen decomposition of
  \(\xi\cdot \sum_{v\le t,u} a_v\in \Hilm_u\cdot I_{t,u}\).
  Hence~\eqref{eq:theta_tu} well-defines an isometric map from a
  dense subspace of \(\Hilm_t\cdot I_{t,u}\)
  to \(\Hilm_u\cdot I_{t,u}\).
  This map extends uniquely to an isometric map
  \(\theta_{u,t}\colon \Hilm_t\cdot I_{t,u} \to \Hilm_u\cdot
  I_{t,u}\).

  The same construction with \(t\) and~\(u\) exchanged gives the map
  \(\theta_{t,u}\colon \Hilm_u\cdot I_{t,u} \to \Hilm_t\cdot
  I_{t,u}\).  This map is inverse to~\(\theta_{u,t}\), so
  that~\(\theta_{u,t}\) is an isomorphism.  Since
  each~\(\theta^v_{u,t}\) is \(A\)\nb-bilinear, so
  is~\(\theta_{u,t}\).  Let \(t,u,w\in S\) and \(\xi\in \Hilm_u\cdot
  (I_{t,u}\cap I_{w,u})\).  The ideal \(I_{t,u}\cap I_{w,u} =
  I_{t,u}\cdot I_{w,u}\) is the sum of the ideals \(\s(\Hilm_e)\cdot
  \s(\Hilm_f) = \s(\Hilm_{e f})\) for idempotent \(e,f\in S\) with
  \(e\le t^* u\), \(f\le w^* u\).  Equivalently, it is the sum
  of~\(\s(\Hilm_x)\) with \(x\le t,u,w\).  If \(x\le t,u,w\) and
  \(\xi\in \Hilm_u\cdot \s(\Hilm_x)\), then
  \[
  \theta_{w,t}\theta_{t,u}(\xi)
  = j_{x,w} j_{x,t}^{-1} j_{x,t} j_{u,t}^{-1}(\xi)
  = j_{x,w} j_{u,t}^{-1}(\xi)
  = \theta_{w,u}(\xi).
  \]
  This implies \(\theta_{w,u}(\xi) = \theta_{w,t}\theta_{t,u}(\xi)\)
  for linear combinations of such~\(\xi\) and hence for all \(\xi\in
  \Hilm_u\cdot (I_{t,u}\cap I_{w,u})\).
\end{proof}

\begin{definition}
  \label{def:representation}
  Let \((\Hilm_t,\mu_{t,u})\)
  be an action of~\(S\)
  on a \(\Cst\)\nb-algebra~\(A\)
  by Hilbert bimodules.  A \emph{representation} of this action by
  multipliers of a \(\Cst\)\nb-algebra~\(D\)
  is a family of linear maps \(\pi_t\colon \Hilm_t\to \Mult(D)\) for
  \(t\in S\) such that
  \begin{enumerate}[label=\textup{(R\arabic*)}]
  \item \label{enum:Rep1}
    \(\pi_{tu}(\mu_{t,u}(\xi\otimes\eta)) = \pi_t(\xi)\pi_u(\eta)\)
    for all \(t,u\in S\), \(\xi\in\Hilm_t\), \(\eta\in\Hilm_u\);
  \item \label{enum:Rep2}
    \(\pi_t(\xi_1)^*\pi_t(\xi_2) = \pi_1(\braket{\xi_1}{\xi_2})\)
    for all \(t\in S\), \(\xi_1,\xi_2\in\Hilm_t\);
  \item \label{enum:Rep3}
    \(\pi_t(\xi_1)\pi_t(\xi_2)^* = \pi_1(\BRAKET{\xi_1}{\xi_2})\)
    for all \(t\in S\), \(\xi_1,\xi_2\in\Hilm_t\);
  \end{enumerate}
  here \(\BRAKET{\xi_1}{\xi_2}\)
  and \(\braket{\xi_1}{\xi_2}\)
  denote the left and right inner products of
  \(\xi_1,\xi_2\in \Hilm_t\).

  The representation is \emph{nondegenerate} if~\(\pi_1(A)D\)
  has dense linear span in~\(D\),
  that is, if~\(\pi_1\) is a nondegenerate representation of~\(A\).

  The \emph{\textup{(}full\textup{)} crossed product}~\(A\rtimes S\)
  of the action~\((\Hilm_t,\mu_{t,u})\)
  is the universal \(\Cst\)\nb-algebra
  for these representations, that is, there is a natural bijection
  between (nondegenerate) \Star{}homomorphisms
  \(A\rtimes S\to\Mult(D)\)
  and (nondegenerate) representations of~\((\Hilm_t,\mu_{t,u})\)
  in~\(\Mult(D)\).
\end{definition}

Like the full crossed product \(A\rtimes S\), the \emph{full section
  \(\Cst\)\nb-algebra} of a Fell bundle is defined by a universal
property with respect to representations of the Fell bundle.  A
representation of a Fell bundle is very close to a representation of
the corresponding action~\((\Hilm_t,\mu_{t,u})\).  The difference is
that representations of the Fell bundle must also be compatible with
the maps \(j_{u,t}\) and~\(J_{t^*}\),
which are part of the data of a Fell bundle over an inverse
semigroup.  However, this extra data is essentially redundant by
\cite{Buss-Meyer:Actions_groupoids}*{Theorem 4.8}.  We are going to
show that any representation of an action is compatible with the
maps \(j_{u,t}\) and~\(J_{t^*}\) in the appropriate sense.  Hence
the full section \(\Cst\)\nb-algebra of a Fell bundle is the same as
the full crossed product of the corresponding action.

By~\ref{enum:AHB2}, condition~\ref{enum:Rep1} for \(t=1\)
and \(u=1\) says that the maps~\(\pi_t\) are \(A\)\nb-bilinear.

\begin{lemma}
  \label{lem:ConsequencesRepresentationDef}
  Let \(\pi_t\colon \Hilm_t \to \Mult(D)\) for \(t\in S\) satisfy
  \ref{enum:Rep1} and~\ref{enum:Rep2}.  Then~\ref{enum:Rep3} is
  equivalent to the following condition:
  \begin{enumerate}[label=\textup{(R3')}]
  \item \label{enum:Rep4}
    \(\pi_t(\Hilm_t)D=\pi_1(\rg(\Hilm_t))D=\pi_1(\rg(\Hilm_{tt^*}))D\)
    for each \(t\in S\).
  \end{enumerate}
\end{lemma}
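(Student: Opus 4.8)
The plan is to prove that~\ref{enum:Rep3} and~\ref{enum:Rep4} are equivalent under the standing assumptions~\ref{enum:Rep1} and~\ref{enum:Rep2}.  I will use two elementary facts about any Hilbert bimodule~\(\Hilm_t\): first, \(\rg(\Hilm_t)\) is the closed linear span of the left inner products \(\BRAKET{\xi}{\eta}\), \(\xi,\eta\in\Hilm_t\), since these already span a dense two-sided ideal of \(\rg(\Hilm_t)\); second, \(\Hilm_t\) is a full, hence left-nondegenerate, left Hilbert \(\rg(\Hilm_t)\)-module, so \(\cspn\bigl(\rg(\Hilm_t)\cdot\Hilm_t\bigr) = \Hilm_t\).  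I also use that~\ref{enum:Rep1} with \(t = 1\) or \(u = 1\) makes each~\(\pi_t\) an \(A\)-bimodule map and that~\ref{enum:Rep2} forces \(\norm{\pi_t(\xi)} \le \norm{\xi}\).  The equality \(\rg(\Hilm_{tt^*}) = \rg(\Hilm_t)\) appearing in~\ref{enum:Rep4} is immediate: apply the chain \(\s(\Hilm_u) = \s(\Hilm_{u^*u}) = \rg(\Hilm_{u^*u}) = \rg(\Hilm_{u^*})\) recalled above to \(u = t^*\).

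Assume~\ref{enum:Rep3}.  The inclusion \(\pi_t(\Hilm_t)D \subseteq \pi_1(\rg(\Hilm_t))D\) uses only \(A\)-bilinearity and left-nondegeneracy: writing \(\xi\in\Hilm_t\) as a limit of finite sums of elements \(a\cdot\xi'\) with \(a\in\rg(\Hilm_t)\) and \(\xi'\in\Hilm_t\) gives \(\pi_t(\xi)\in\cspn\bigl(\pi_1(\rg(\Hilm_t))\,\pi_t(\Hilm_t)\bigr)\).  For the reverse inclusion, \(\pi_1(\BRAKET{\xi}{\eta})\,d = \pi_t(\xi)\bigl(\pi_t(\eta)^*d\bigr)\in\pi_t(\Hilm_t)D\) by~\ref{enum:Rep3}, and such products span a dense subspace of \(\pi_1(\rg(\Hilm_t))D\).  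Hence \(\pi_t(\Hilm_t)D = \pi_1(\rg(\Hilm_t))D = \pi_1(\rg(\Hilm_{tt^*}))D\), which is~\ref{enum:Rep4}.

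Conversely, assume~\ref{enum:Rep4}.  First note a one-sided form of~\ref{enum:Rep3} that holds under~\ref{enum:Rep1} and~\ref{enum:Rep2} alone: applying~\(\pi_t\) to the imprimitivity identity \(\BRAKET{\xi}{\eta}\cdot\zeta = \xi\cdot\braket{\eta}{\zeta}\) and using bilinearity and~\ref{enum:Rep2} gives \(\pi_1(\BRAKET{\xi}{\eta})\,\pi_t(\zeta) = \pi_t(\xi)\pi_t(\eta)^*\pi_t(\zeta)\) for all \(\xi,\eta,\zeta\in\Hilm_t\).  Thus \(T\defeq\pi_t(\xi)\pi_t(\eta)^* - \pi_1(\BRAKET{\xi}{\eta})\in\Mult(D)\) annihilates every~\(\pi_t(\zeta)\), hence by continuity annihilates \(\pi_t(\Hilm_t)D\), which equals \(\pi_1(\rg(\Hilm_t))D\) by~\ref{enum:Rep4}.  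Now take a positive approximate unit \((c_\mu)\) of \(\rg(\Hilm_t)\).  On the one hand \(T\pi_1(c_\mu)D = 0\), since \(\pi_1(c_\mu)D \subseteq \pi_1(\rg(\Hilm_t))D\).  On the other hand \(T\pi_1(c_\mu)\to T\) in norm, because \(\pi_1(\BRAKET{\xi}{\eta})\pi_1(c_\mu) = \pi_1(\BRAKET{\xi}{\eta}c_\mu)\to\pi_1(\BRAKET{\xi}{\eta})\) and \(\pi_t(\eta)^*\pi_1(c_\mu) = \bigl(\pi_1(c_\mu)\pi_t(\eta)\bigr)^* = \pi_t(c_\mu\cdot\eta)^*\to\pi_t(\eta)^*\), using \(c_\mu\cdot\eta\to\eta\) in~\(\Hilm_t\) and that~\(\pi_t\) is contractive.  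Passing to the limit, \(TD = 0\); since \(D\) is an essential ideal in \(\Mult(D)\), this forces \(T = 0\), which is~\ref{enum:Rep3}.

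The main obstacle is the converse direction, and within it the passage from an identity valid only after right multiplication by \(\pi_t(\zeta)\) to an identity in \(\Mult(D)\); this is exactly what the approximate-unit argument, combined with essentiality of \(D\idealin\Mult(D)\), delivers.  The delicate bookkeeping is to keep the two ideals apart — \(\s(\Hilm_t)\) acts on the right of~\(\Hilm_t\) while \(\rg(\Hilm_t)\) acts on the left, and it is the latter that features in~\ref{enum:Rep4} — and to check that the convergence \(\pi_t(c_\mu\cdot\eta)^*\to\pi_t(\eta)^*\) is legitimate, which is precisely where contractivity of~\(\pi_t\) and left-nondegeneracy of~\(\Hilm_t\) over~\(\rg(\Hilm_t)\) enter; recording these at the outset makes the rest routine.
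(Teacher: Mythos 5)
Your proof is correct, and its skeleton matches the paper's: the forward direction is the same, and in the converse you establish exactly the paper's key identity, namely that applying the representation to \(\BRAKET{\xi}{\eta}\cdot\zeta = \xi\cdot\braket{\eta}{\zeta}\) yields \(\pi_t(\xi)\pi_t(\eta)^*\pi_t(\zeta) = \pi_1(\BRAKET{\xi}{\eta})\pi_t(\zeta)\), so that \(T \defeq \pi_t(\xi)\pi_t(\eta)^* - \pi_1(\BRAKET{\xi}{\eta})\) annihilates \(\pi_t(\Hilm_t)D = \pi_1(\rg(\Hilm_t))D\).  You differ only in how you upgrade this to \(T=0\) in \(\Mult(D)\): the paper notes that \(T^*D\) is itself contained in \(\pi_t(\Hilm_t)D + \pi_1(\rg(\Hilm_t))D\), the very space annihilated by~\(T\), so \(TT^*=0\) and \(T=0\) by the \(\Cst\)\nb-identity — a purely algebraic finish needing no continuity; you instead right-multiply by an approximate unit \((c_\mu)\) of \(\rg(\Hilm_t)\) and take a norm limit, which additionally uses that \(\pi_1(c_\mu)\) is self-adjoint and that \(\pi_1\) and~\(\pi_t\) are bounded — facts you assert (``\ref{enum:Rep2} forces contractivity'') but do not derive.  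They are routine and worth recording: \ref{enum:Rep2} with \(t=1\) gives \(\pi_1(a^*a)=\pi_1(a)^*\pi_1(a)\ge 0\), so~\(\pi_1\) is a positive multiplicative linear map, hence a \Star{}homomorphism and contractive, whence \(\pi_1(c_\mu)^*=\pi_1(c_\mu)\) and \(\norm{\pi_t(\xi)}^2 = \norm{\pi_1(\braket{\xi}{\xi})} \le \norm{\xi}^2\).  With that noted, your approximate-unit argument is complete; the paper's \(TT^*=0\) trick just reaches the same conclusion more economically.
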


\begin{proof}
  First assume~\ref{enum:Rep3}.  Recall that
  \(\rg(\Hilm_t)=\rg(\Hilm_{tt^*})\) for all \(t\in S\).  Since
  \(\Hilm_t=\rg(\Hilm_t)\cdot \Hilm_t\), we have \(\pi_t(\Hilm_t)D =
  \pi_1(\rg(\Hilm_t))\pi_t(\Hilm_t)D \subseteq
  \pi_1(\rg(\Hilm_t))D\).  The reverse inclusion follows
  from~\ref{enum:Rep3}: \(\pi_t(\Hilm_t)D\supseteq
  \cspn\pi_t(\Hilm_t)\pi_t(\Hilm_t)^*D=\pi_1(\rg(\Hilm_t))D\).
  Conversely, assume~\ref{enum:Rep4}.  If
  \(\xi_1,\xi_2,\xi_3\in\Hilm_t\), then
  \[
  \pi_t(\xi_1) \pi_t(\xi_2)^* \pi_t(\xi_3)
  = \pi_t(\xi_1 \braket{\xi_2}{\xi_3})
  = \pi_1(\BRAKET{\xi_1}{\xi_2}) \pi_t(\xi_3).
  \]
  Hence the operators in~\ref{enum:Rep3} are equal
  on~\(\pi_t(\Hilm_t)D\).  Then they are also equal on
  \(\pi_1(\BRAKET{\Hilm_t}{\Hilm_t})\cdot D\) by~\ref{enum:Rep4}.
  Now~\ref{enum:Rep3} follows because
  \[
  (\pi_t(\xi_1) \pi_t(\xi_2)^* - \pi_1(\BRAKET{\xi_1}{\xi_2}))\cdot
  (\pi_t(\xi_1) \pi_t(\xi_2)^* - \pi_1(\BRAKET{\xi_1}{\xi_2}))^*
  = 0.\qedhere
  \]
\end{proof}

\begin{proposition}
  \label{pro:crossed_product_section_algebra}
  Any representation \((\pi_t)_{t\in S}\)
  of \((\Hilm_t,\mu_{t,u})\) is compatible with the maps
  \(j_{u,t}\colon \Hilm_t\to\Hilm_u\)
  and \(J_t\colon \Hilm_t^*\to \Hilm_{t^*}\)
  in the sense that \(\pi_u(j_{u,t}(\xi)) = \pi_t(\xi)\)
  and \(\pi_{t^*}(J_t(\xi^*)) = \pi_t(\xi)^*\)
  for all \(t,u\in S\) with \(t\le u\) and \(\xi\in\Hilm_t\).

  Even more, \(\pi_u\circ\theta_{u,t}(\xi)=\pi_t(\xi)\) for
  all \(t,u \in S\), \(\xi\in\Hilm_t\cdot I_{t,u}\).

  The full crossed product \(A\rtimes S\)
  is the same as the full section \(\Cst\)\nb-algebra
  of the Fell bundle associated to the action.
\end{proposition}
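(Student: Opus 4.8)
The plan is to reduce the proposition to a single observation about idempotents. First note that \(\pi_1\colon A\to\Mult(D)\) is a \Star{}homomorphism: by~\ref{enum:Rep2} one has \(\pi_1(a^*a)=\pi_1(a)^*\pi_1(a)\ge 0\), so \(\pi_1\) is positive, hence \Star{}preserving, and it is multiplicative by~\ref{enum:Rep1}; thus \(\pi_1\) is contractive and then so is every \(\pi_t\) by~\ref{enum:Rep2}. The key claim is: \emph{for each idempotent \(e\in S\), the map \(\pi_e\) is the restriction of \(\pi_1\) to the ideal \(I_e\idealin A\) with which \(\Hilm_e\) is identified}. To prove it, recall that under this identification \(\mu_{e,e}\) is the multiplication of \(I_e\) and \(\rg(\Hilm_e)=I_e\); since \(\pi_e\) is \(A\)\nb-bilinear and multiplicative for \(\mu_{e,e}\) (by~\ref{enum:Rep1} and~\ref{enum:AHB2}), for \(a,b\in I_e\) we get \(\pi_e(a)\pi_e(b)=\pi_e(ab)=\pi_1(a)\pi_e(b)\), so \((\pi_e(a)-\pi_1(a))\pi_e(I_e)=0\). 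By Lemma~\ref{lem:ConsequencesRepresentationDef}, condition~\ref{enum:Rep4} gives \(\pi_e(\Hilm_e)D=\pi_1(I_e)D\), hence \((\pi_e(a)-\pi_1(a))\pi_1(I_e)D=0\); evaluating against \(\pi_1(u_\lambda)d\) for an approximate unit \((u_\lambda)\) of \(I_e\) and \(d\in D\), and using \(\pi_e(a)\pi_1(u_\lambda)=\pi_e(au_\lambda)\to\pi_e(a)\) and \(\pi_1(a)\pi_1(u_\lambda)=\pi_1(au_\lambda)\to\pi_1(a)\) in norm, shows \(\pi_e(a)d=\pi_1(a)d\) for all \(d\in D\). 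An analogous argument using~\ref{enum:Rep2} gives the identity also for right multiplication, so \(\pi_e(a)=\pi_1(a)\) in \(\Mult(D)\).

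Granting this, compatibility with \(j_{u,t}\) is immediate. For \(t\le u\) choose an idempotent \(e\) with \(t=ue\); using \(\Hilm_u\otimes_A\Hilm_e\cong\Hilm_u\cdot\Hilm_e\subseteq\Hilm_u\) and the Cohen--Hewitt Factorisation Theorem, write any \(\xi\in\Hilm_t\) as \(\xi=\mu_{u,e}(\eta\otimes a)\) with \(\eta\in\Hilm_u\), \(a\in I_e\), so that \(j_{u,t}(\xi)=\eta\cdot a\); then \(\pi_t(\xi)=\pi_u(\eta)\pi_e(a)=\pi_u(\eta)\pi_1(a)=\pi_u(\eta\cdot a)=\pi_u(j_{u,t}(\xi))\). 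For \(J_t\): by its defining property, by~\ref{enum:Rep1}, by the key claim applied to the idempotent \(t^*t\) (note \(\braket{\xi}{\eta}\in\s(\Hilm_t)=I_{t^*t}\)), and by~\ref{enum:Rep2},
\[
\pi_{t^*}(J_t(\xi^*))\,\pi_t(\eta)
=\pi_{t^*t}(\braket{\xi}{\eta})
=\pi_1(\braket{\xi}{\eta})
=\pi_t(\xi)^*\pi_t(\eta)
\qquad\text{for all }\eta\in\Hilm_t.
\]
Hence \(m\defeq\pi_{t^*}(J_t(\xi^*))-\pi_t(\xi)^*\) annihilates \(\pi_t(\Hilm_t)D\), and so, by~\ref{enum:Rep4}, the closed subspace \(V\defeq\cspn\pi_1(\rg(\Hilm_t))D\). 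On the other hand, factoring \(J_t(\xi^*)\) on the right through \(\s(\Hilm_{t^*})=\rg(\Hilm_t)\) shows \(\pi_{t^*}(J_t(\xi^*))^*D\subseteq V\), while \(\pi_t(\xi)D\subseteq\pi_t(\Hilm_t)D\subseteq V\); thus \(m^*D\subseteq V\), so \(mm^*D\subseteq mV=0\), whence \(mm^*=0\) in \(\Mult(D)\) and \(m=0\).

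For the third assertion, \(\theta^v_{u,t}=j_{u,v}\circ j_{t,v}^{-1}\) on \(\Hilm_t\cdot\s(\Hilm_v)\), so the \(j\)\nb-compatibility just proved gives \(\pi_u\circ\theta^v_{u,t}=\pi_t\) there; since \(\theta_{u,t}\) restricts to \(\theta^v_{u,t}\) on each \(\Hilm_t\cdot\s(\Hilm_v)\) (Lemma~\ref{lem:intersect_Hilm_tu}), these subspaces are linearly dense in \(\Hilm_t\cdot I_{t,u}\), and \(\pi_u\circ\theta_{u,t}\) and \(\pi_t\) are contractive, they agree on \(\Hilm_t\cdot I_{t,u}\). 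Finally, the full section \(\Cst\)\nb-algebra is by definition universal for families \((\pi_t)\) satisfying~\ref{enum:Rep1}, \(\pi_u\circ j_{u,t}=\pi_t\) for \(t\le u\), and \(\pi_{t^*}(J_t(\xi^*))=\pi_t(\xi)^*\). Every representation of the action is such a family by the above, and conversely any such family satisfies~\ref{enum:Rep1} and, by~\ref{enum:Rep1} together with \(j\)\nb-compatibility for the idempotents \(t^*t\) and \(tt^*\), the defining property of \(J_t\) and the companion identity \(\mu_{t,t^*}(\xi\otimes J_t(\eta^*))=\BRAKET{\xi}{\eta}\) of \cite{Buss-Meyer:Actions_groupoids}*{Theorem~4.8}, also satisfies~\ref{enum:Rep2} and~\ref{enum:Rep3}. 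Thus the two classes of representations coincide, and so do the corresponding nondegeneracy conditions, whence \(A\rtimes S\) is the full section \(\Cst\)\nb-algebra of the Fell bundle.

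The main obstacle is the initial claim that \(\pi_e=\pi_1|_{I_e}\) for idempotent \(e\): the axioms~\ref{enum:Rep1}--\ref{enum:Rep3} only assert equalities of products, and upgrading these to an honest equality of maps genuinely requires~\ref{enum:Rep4} (equivalently~\ref{enum:Rep3}) to replace \(\pi_e(\Hilm_e)D\) by \(\pi_1(I_e)D\), followed by the approximate-unit limit. Everything afterwards is bookkeeping with the explicit formulas for \(j_{u,t}\), \(J_t\) and \(\theta_{u,t}\); the one point still needing a little care is the chain of subspace inclusions forcing \(m=0\) in the \(J_t\)\nb-step.
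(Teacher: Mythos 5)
Your proof is correct and follows essentially the same route as the paper: everything is reduced to the key identity \(\pi_e=\pi_1|_{\Hilm_e}\) for idempotent \(e\) (using \ref{enum:Rep4} via Lemma~\ref{lem:ConsequencesRepresentationDef}), after which \(j\)\nb-compatibility follows by factorisation, the \(\theta_{u,t}\)\nb-statement by density and boundedness, and \(J_t\)\nb-compatibility by the same subspace-annihilation argument forcing \(mm^*=0\). The only deviations are cosmetic: you settle the idempotent step with an approximate-unit limit where the paper multiplies \(\pi_1(a)-\pi_e(a)\) against its adjoint, and you spell out the converse direction (that representations compatible with \(\mu\), \(j_{u,t}\) and \(J_t\) satisfy \ref{enum:Rep2} and \ref{enum:Rep3}), which the paper leaves implicit.
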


\begin{proof}
  The full section \(\Cst\)\nb-algebra of a Fell bundle and the full
  crossed product \(A\rtimes S\) are both defined through a
  universal property for certain representations.  To prove their
  equality, we show that any representation
  of~\((\Hilm_t,\mu_{t,u})\) is compatible with the maps~\(j_{u,t}\)
  and~\(J_{t^*}\).

  We may assume that~\(\Hilm_e\) for an idempotent element \(e\in
  S\) is an ideal of~\(A\) with the standard Hilbert bimodule
  structure and that \(\mu_{t,e}\) and~\(\mu_{e,t}\) are the maps
  \(\xi\otimes a\mapsto \xi\cdot a\) and \(a\otimes\xi\mapsto
  a\cdot\xi\) for all \(t\in S\).  This normalisation follows from
  \cite{Buss-Meyer:Actions_groupoids}*{Proposition 4.6} as in the
  proof of \cite{Buss-Meyer:Actions_groupoids}*{Proposition 3.7}.
  The results in~\cite{Buss-Meyer:Actions_groupoids} for actions of
  inverse semigroups on groupoids carry over to actions on
  \(\Cst\)\nb-algebras because both setups share some basic
  properties, which suffice for the proofs to go through, compare
  the proof of \cite{Buss-Meyer:Actions_groupoids}*{Theorem 4.8},
  which merely refers to the earlier proofs of
  \cite{Buss-Meyer:Actions_groupoids}*{Propositions 3.7 and 3.9}.

  Let \(t\le u\)
  and \(e=t^*t\in E(S)\), where \(E(S) \defeq \{e\in S\mid e^* e =
  e\}\).
  The embedding \(j_{u,t}\colon\Hilm_t\hookrightarrow \Hilm_u\)
  is defined by \(j_{u,t}(\mu_{u,e}(\xi\otimes a))=\xi\cdot a\)
  for \(\xi\in \Hilm_u\)
  and \(a\in \Hilm_e\);
  this is well-defined because the multiplication map
  \(\mu_{u,e}\colon \Hilm_u\otimes_A\Hilm_e\congto \Hilm_t\)
  is an isomorphism.  The conditions for a representation imply
  \[
  \pi_u(j_{u,t}(\mu_{u,e}(\xi\otimes a))
  = \pi_u(\xi)\cdot \pi_1(a),\qquad
  \pi_t(\mu_{u,e}(\xi\otimes a))
  = \pi_u(\xi)\cdot \pi_e(a).
  \]
  So we are done if we show that \(\pi_e=\pi_1|_{\Hilm_e}\)
  for idempotent~\(e\).
  For this we take \(a,b\in \Hilm_e\)
  and use~\ref{enum:Rep1} to compute:
  \[
  \pi_1(a)\pi_e(b)=\pi_e(ab)=\pi_e(a)\pi_e(b).
  \]
  Hence \(\pi_1(a)\xi=\pi_e(a)\xi\)
  for all \(\xi\in \pi_e(\Hilm_e)D=\pi_1(\Hilm_e)D\)
  by Lemma~\ref{lem:ConsequencesRepresentationDef}.  Since the image
  of \(\pi_1(a^*)-\pi_e(a^*)\)
  is contained in \(\pi_e(\Hilm_e)D=\pi_1(\Hilm_e)D\),
  this implies \((\pi_1(a)-\pi_e(a))(\pi_1(a^*)-\pi_e(a^*))=0\),
  hence \(\pi_1(a)=\pi_e(a)\) for all \(a\in\Hilm_e\).

  The compatibility of~\(\pi\)
  with the maps~\(j_{u,t}\)
  implies that \(\pi_u(\theta_{u,t}^v(\eta))=\pi_t(\eta)\)
  for all \(v,t,u\in S\)
  with \(v\le t,u\)
  and \(\eta\in \Hilm_t\cdot \s(\Hilm_v)\).
  Since this holds for all~\(v\),
  we get \(\pi_u\circ\theta_{u,t}(\xi)=\pi_t(\xi)\)
  by the construction of~\(\theta_{u,t}\).

  By definition, \(J_t(\xi^*)\in \Hilm_{t^*}\)
  for \(\xi\in\Hilm_t\)
  is the unique element with
  \[
  \mu_{t^*,t}(J_t(\xi^*)\otimes \eta)=\braket{\xi}{\eta}
  \]
  for all \(\eta\in \Hilm_t\).  Hence
  \begin{multline*}
    \pi_t(\xi)^*\pi_t(\eta)
    = \pi_1(\braket{\xi}{\eta})=\pi_{t^*t}(\braket{\xi}{\eta})
    \\= \pi_{t^*t}(\mu_{t^*,t}(J_t(\xi^*)\otimes\eta))
    = \pi_{t^*}(J_t(\xi^*))\pi_t(\eta).
  \end{multline*}
  Hence \(\pi_t(\xi)^*x=\pi_{t^*}(J_t(\xi^*))x\)
  for all \(x\in \pi_t(\Hilm_t)D=\pi_1(\Hilm_{tt^*})D\)
  by Lemma~\ref{lem:ConsequencesRepresentationDef}.  Since
  \(\Hilm_{t^*} = \Hilm_{t^*}\cdot \Hilm_{tt^*}\),
  we have
  \(\pi_{t^*}(\Hilm_{t^*})^*D =
  \pi_{tt^*}(\Hilm_{tt^*})^*\pi_{t^*}(\Hilm_{t^*})^*D \subseteq
  \pi_1(\Hilm_{tt^*}) D\).
  Hence the image of \(\pi_t(\xi) - \pi_{t^*}(J_t(\xi^*))^*\)
  is contained in \(\pi_1(\Hilm_{tt^*})D\).
  Thus
  \((\pi_t(\xi)^* - \pi_{t^*}(J_t(\xi^*))(\pi_t(\xi) -
  \pi_{t^*}(J_t(\xi^*))^*) = 0\),
  so that \(\pi_t(\xi)^* = \pi_{t^*}(J_t(\xi^*))\).
\end{proof}

We now describe~\(A\rtimes S\) as the \(\Cst\)\nb-completion of a
certain dense \Star{}\hspace{0pt}subalgebra.  Let \(A\rtimes_\alg
S\) be the quotient of \(\bigoplus_{t\in S} \Hilm_t\) by the linear
span of \(\theta_{u,t}(\xi)\delta_u-\xi\delta_t\) for all \(t,u\in
S\) and \(\xi\in\Hilm_t\cdot I_{t,u}\).  The multiplication
maps~\(\mu_{t,u}\) and the involutions~\(J_t\) described above turn
this into a \Star{}algebra.  Our definition of \(A\rtimes_\alg S\)
is slightly different from the one in~\cite{Exel:noncomm.cartan},
where only the linear span of \(j_{u,t}(\xi)\delta_u-\xi\delta_t\)
for \(t,u\in S\) with \(t\le u\) is divided out.  This linear span
contains \(\theta_{u,t}(\xi)\delta_u-\xi\delta_t\) for \(t,u\in S\)
and \(\xi\in\Hilm_t\cdot \Hilm_e\) for all \(e\in E(S)\)
with \(te = ue\).  For fixed \(t,u\), the closure of this subspace
in \(\Hilm_u \oplus \Hilm_t\) contains
\(\theta_{u,t}(\xi)\delta_u-\xi\delta_t\) even if only
\(\xi\in\Hilm_t\cdot I_{t,u}\).  Therefore, a linear map or a
seminorm on \(\bigoplus_{t\in S} \Hilm_t\) that is norm-bounded on
each summand and vanishes on \(j_{u,t}(\xi)\delta_u-\xi\delta_t\)
for \(t,u\in S\) with \(t\le u\) and \(\xi\in\Hilm_t\) still
vanishes on \(\theta_{u,t}(\xi)\delta_u-\xi\delta_t\) for \(t,u\in
S\) and \(\xi\in\Hilm_t\cdot I_{t,u}\).

We will show below that \(A\rtimes_\alg S\) as defined above embeds
into the \(\Cst\)\nb-algebra \({A\rtimes_\red S}\), which justifies
our small change in the definition.

A representation of the action~\((\Hilm_t)_{t\in S}\) is equivalent
to a representation of the \Star{}algebra \(A\rtimes_\alg S\) by
Proposition~\ref{pro:crossed_product_section_algebra}.
Hence~\(A\rtimes S\) is the \(\Cst\)\nb-completion
of~\(A\rtimes_\alg S\).  Why does the maximal \(\Cst\)\nb-seminorm
on~\(A\rtimes_\alg S\) exist?  If \(\xi\in\Hilm_t\), then
\(\norm{\xi} \le \norm{\xi^* \xi}^{1/2}\) for any
\(\Cst\)\nb-seminorm on~\(A\rtimes_\alg S\).  Since \(\xi^*\xi\in
A\), which is already a \(\Cst\)\nb-algebra, the spectral radius
of~\(\xi^* \xi\) gives a finite upper bound on~\(\norm{\xi}^2\) for
any \(\xi\in\Hilm_t\).  This implies a finite upper bound
for~\(\norm{\xi}\) for any \(\xi\in A\rtimes_\alg S\).

In order to define a reduced analogue of~\(A\rtimes S\),
we need a way to induce representations of~\(A\)
to representations of~\(A\rtimes_\alg S\).
Then \(A\rtimes_\red S\)
is defined as the completion in the \(\Cst\)\nb-seminorm
on~\(A\rtimes_\alg S\)
defined by these ``regular'' representations.

Exel describes an induction process for \emph{pure} states
in~\cite{Exel:noncomm.cartan}.
We want, instead, an induction process for \emph{all} states or,
equivalently, for all representations of~\(A\).
By \cite{Rieffel:Morita}*{Theorem 6.9}, such an induction functor is
equivalent to a self-dual (right) Hilbert module over the
bidual~\(\bid{A}\)
of~\(A\)
with a normal left action of~\(\bid{(A\rtimes S)}\);
the normal left action of~\(\bid{(A\rtimes S)}\)
is equivalent to a nondegenerate representation of~\(A\rtimes S\).
We shall construct this \(\Cst\)\nb-correspondence
from a weak expectation, that is, a normal expectation
\(E\colon \bid{(A\rtimes S)}\to \bid{A}\);
the resulting Hilbert module is the completion of
\(\bid{(A\rtimes S)}\)
for the inner product \(\braket{x_1}{x_2} \defeq E(x_1^*\cdot x_2)\).
To construct this weak expectation, we will extend the
action~\((\Hilm_t,\mu_{t,u})\)
on~\(A\)
to an action on the enveloping \(\Wst\)\nb-algebra~\(\bid{A}\).

\section{Actions on W*-algebras}
\label{sec:act_Wstar}

A Hilbert module~\(\Hilm\)
over a \(\Cst\)\nb-algebra~\(A\)
is \emph{self-dual} if every bounded \(A\)\nb-module
map \(\Hilm\to A\)
is of the form \(\xi\mapsto \braket{\eta}{\xi}\)
for some \(\eta\in\Hilm\) (see~\cite{Paschke:Inner_product}).
Any bounded \(A\)\nb-module
map between self-dual Hilbert modules is adjointable.  For Hilbert
modules over \(\Wst\)\nb-algebras, self-duality is equivalent to
compatibility of the action with the weak topologies,
see~\cite{Skeide:Self-duality}.

\begin{definition}
  \label{def:S-action_Wstar}
  Let~\(S\) be a unital inverse semigroup.
  An action of~\(S\) on a \(\Wst\)\nb-algebra~\(M\)
  consists of \emph{self-dual} Hilbert
  \(M\)\nb-bimodules~\((\Hilm_t)_{t\in S}\)
  and Hilbert \(M\)\nb-bimodule isomorphisms
  \[
  \mu_{t,u}\colon \Hilm_t\barotimes_M \Hilm_u\congto \Hilm_{tu}
  \]
  for \(t,u\in S\)
  satisfying analogues of \ref{enum:AHB3}--\ref{enum:AHB2},
  where~\(\barotimes\)
  denotes the weak closure of the tensor product of the two bimodules.
\end{definition}

\begin{example}
  \label{exa:bidual_action}
  We shall be mostly interested in the following situation.
  Let~\((\Hilm_t)_{t\in S}\) be an action of~\(S\) by Hilbert bimodules on
  a \(\Cst\)\nb-algebra~\(A\), as in the previous section.  The
  bidual~\(\bid{\Hilm_t}\)
  of~\(\Hilm_t\)
  is a self-dual Hilbert \(\bid{A}\)\nb-bimodule
  (see~\cite{Schweizer:Hilbert_Cstar}).  The isomorphism~\(\mu_{t,u}\)
  induces a Hilbert \(\bid{A}\)\nb-bimodule isomorphism
  \[
  \bid{\mu_{t,u}}\colon \bid{\Hilm_t}\barotimes_{\bid{A}}\bid{\Hilm_u}
  \to \bid{\Hilm_{tu}}.
  \]
  A quick way to see this is to form a linking \(\Cst\)\nb-algebra
  and its \(\Wst\)\nb-hull:
  \[
  L_{t,u} \defeq
  \begin{pmatrix}
    A& \Hilm_u^*&\Hilm_{tu}^*\\
    \Hilm_u&A&\Hilm_{t}^*\\
    \Hilm_{tu}&\Hilm_t&A
  \end{pmatrix},\qquad \bid{L_{t,u}} =
  \begin{pmatrix}
    \bid{A}& \bid{(\Hilm_u^*)}& \bid{(\Hilm_{tu}^*)}\\
    \bid{\Hilm_u}&\bid{A}&\bid{(\Hilm_{t}^*)}\\
    \bid{\Hilm_{tu}}&\bid{\Hilm_{t}}&\bid{A}
  \end{pmatrix}.
  \]
  The multiplication in~\(L_{t,u}\)
  restricts to~\(\mu_{t,u}\)
  on the summands \(\Hilm_t,\Hilm_u\),
  so the multiplication in~\(\bid{L_{t,u}}\)
  restricts to~\(\bid{\mu_{t,u}}\)
  on \(\bid{\Hilm_t},\bid{\Hilm_u}\).
  The data \((\bid{\Hilm_t},\bid{\mu_{t,u}})\)
  defined above gives an action of~\(S\)
  on the \(\Wst\)\nb-algebra~\(\bid{A}\).
\end{example}

We return to the general case.  Let~\(M\)
be a \(\Wst\)\nb-algebra
and let \((\Hilm_t,\mu_{t,u})\) be an \(S\)\nb-action
on~\(M\) by self-dual Hilbert bimodules.

For an idempotent \(e\in E(S)\),
\(\Hilm_e\idealin M\)
is a weakly closed, two-sided ideal.  So it is of the form
\(P_e\cdot M = M\cdot P_e\)
for a central projection~\(P_e\in M\).
For \(t,u\in S\),
let \(\bar{I}_{t,u}\)
be the \emph{weak} closure of \(\sum_{v\le t,u} \s(\Hilm_v)\).
Equivalently, \(\bar I_{t,u}\)
is the ideal generated by the supremum of the central
projections~\(P_{v^* v}\)
for \(v\le t,u\).
In the situation of Example~\ref{exa:bidual_action},
\(\bar{I}_{t,u}\subseteq \bid{A}\) is
the bidual of the ideal~\(I_{t,u}\) in~\(A\).

\begin{lemma}
  \label{lem:intersect_Hilm_tu_self-dual}
  For \(t,u\in S\),
  there is a unique \(M\)\nb-bimodule
  map and partial isometry \(\Theta_{u,t}\colon \Hilm_t\to \Hilm_u\)
  that extends the maps
  \[
  \theta_{u,t}^v\colon
  \Hilm_t\supseteq \Hilm_t\cdot \s(\Hilm_v)
  \xrightarrow[\cong]{\mu_{t,v^* v}} \Hilm_v
  \xrightarrow[\cong]{\mu_{u,v^* v}^{-1}}
  \Hilm_u\cdot \s(\Hilm_v) \subseteq \Hilm_u
  \]
  for all \(v\le t,u\)
  and satisfies
  \(\Theta_{u,t}\Theta_{u,t}^*(\Hilm_t) = \Hilm_t\cdot \bar{I}_{t,u}\)
  and
  \(\Theta_{u,t}^*\Theta_{u,t}(\Hilm_u) = \Hilm_u\cdot
  \bar{I}_{t,u}\).
  Furthermore, \(\Theta_{u,t}^*=\Theta_{t,u}\),
  \(\Theta_{u,t}\circ j_{t,v} = \Theta_{u,v}\) for all \(u,t,v\in S\)
  with \(v\le t\), and
  \(\Theta_{u,t}\circ \Theta_{t,v}(\xi) = \Theta_{u,v}(\xi)\) for
  all \(u,t,v\in S\) and \(\xi \in \Hilm_v\cdot \bar{I}_{t,v}\).
\end{lemma}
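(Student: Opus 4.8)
The plan is to build $\Theta_{u,t}$ by first establishing the normal (weakly continuous) analogue of the norm-continuous gluing done in Lemma~\ref{lem:intersect_Hilm_tu}, and then promoting the resulting map, initially defined on the weakly dense submodule $\Hilm_t\cdot I_{t,u}\subseteq\Hilm_t$, to a bounded $M$\nb-module map on all of $\Hilm_t$ using self-duality. Concretely: the ideal $I_{t,u}\idealin A$ (or its analogue in $M$) is weakly dense in $\bar I_{t,u}$, and $\Hilm_t\cdot I_{t,u}$ is weakly dense in the self-dual submodule $\Hilm_t\cdot\bar I_{t,u}=P\cdot\Hilm_t$, where $P\in M$ is the supremum of the central projections $P_{v^*v}$ for $v\le t,u$. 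By Lemma~\ref{lem:intersect_Hilm_tu} (whose proof applies verbatim once we know the maps $\mu_{t,v^*v}$ restrict to Hilbert bimodule isomorphisms $\Hilm_t\cdot\s(\Hilm_v)\congto\Hilm_v$, which follows from the $\Wst$-analogue of \cite{Buss-Meyer:Actions_groupoids}*{Proposition~4.3}) we get an inner-product-preserving $M$\nb-bilinear map $\theta_{u,t}\colon\Hilm_t\cdot I_{t,u}\to\Hilm_u\cdot I_{t,u}$; tensoring up or arguing directly with the $\Wst$-hull of the linking algebra $L_{t,u}$ as in Example~\ref{exa:bidual_action} shows it is normal, hence extends to a weakly continuous isometry $\Hilm_t\cdot\bar I_{t,u}\to\Hilm_u\cdot\bar I_{t,u}$. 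Finally, extend by $0$ on the orthogonal complement $(1-P)\cdot\Hilm_t$: since $\Hilm_t\cdot\bar I_{t,u}$ is an orthogonally complemented (indeed, weakly closed, hence self-dual) submodule, this produces a bounded $M$\nb-module map $\Theta_{u,t}\colon\Hilm_t\to\Hilm_u$ which is adjointable (all module maps between self-dual modules are), and the adjoint is $\Theta_{t,u}$ extended the same way, so $\Theta_{u,t}$ is a partial isometry with the stated initial and final submodules.

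The three remaining identities are checked by weak density. For $\Theta_{u,t}^*=\Theta_{t,u}$: on the weakly dense submodule $\Hilm_t\cdot I_{t,u}$ this is the corresponding statement $\theta_{u,t}^{-1}=\theta_{t,u}$ from Lemma~\ref{lem:intersect_Hilm_tu}, and both sides of $\langle\Theta_{u,t}\xi\mid\eta\rangle=\langle\xi\mid\Theta_{t,u}\eta\rangle$ are separately weakly continuous in $\xi$ and $\eta$, so it passes to the weak closures, and on the complementary pieces both sides vanish. For $\Theta_{u,t}\circ j_{t,v}=\Theta_{u,v}$ with $v\le t$: here $j_{t,v}$ identifies $\Hilm_v$ with $\Hilm_t\cdot\s(\Hilm_v)$, which lies inside $\Hilm_t\cdot I_{t,u}$ whenever also $v\le u$; for general $v\le t$ one notes that $\s(\Hilm_v)\cap\bar I_{t,u}$ corresponds to those pieces where $\Theta_{u,t}$ acts by the $\theta^{v'}$ with $v'\le v$, and outside of $\bar I_{t,u}$ both composites are $0$, so the identity reduces again to the corresponding norm-level statement $j_{u,v}=j_{t,v}^{-1}$-type compatibility inside Lemma~\ref{lem:intersect_Hilm_tu} wait — more carefully, one uses $\Theta_{u,t}\circ j_{t,v}(\eta)=\Theta_{u,t}(\eta\cdot\s(\Hilm_v)\text{-part})$ and the cocycle relation below. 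For the cocycle identity $\Theta_{u,t}\circ\Theta_{t,v}=\Theta_{u,v}$ on $\Hilm_v\cdot\bar I_{t,v}$: on the weakly dense submodule $\Hilm_v\cdot(I_{t,v}\cap I_{w,v})$-type pieces this is exactly the relation $\theta_{w,u}=\theta_{w,t}\theta_{t,u}$ from Lemma~\ref{lem:intersect_Hilm_tu}, rewritten via $j_{x,w}j_{u,t}^{-1}$-style composites of inclusion maps for $x\le t,u,w$; weak continuity of all three maps in $\xi$ then extends the identity to the weak closure, and one must separately note that $\Theta_{t,v}$ maps $\Hilm_v\cdot\bar I_{t,v}$ into the domain $\Hilm_t\cdot\bar I_{u,t}$ on which $\Theta_{u,t}$ is nontrivial exactly on the overlap.

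Uniqueness is immediate: any $M$\nb-bimodule map and partial isometry extending all the $\theta_{u,t}^v$ must agree with $\Theta_{u,t}$ on $\Hilm_t\cdot\s(\Hilm_v)$ for every $v\le t,u$, hence on the closed span $\Hilm_t\cdot I_{t,u}$; being a partial isometry with initial space necessarily equal to the $M$\nb-submodule generated by the images, it must also agree (both equal $0$) on the orthogonal complement. The main obstacle I anticipate is not any single identity but the systematic bookkeeping of \emph{where} each map is nonzero — keeping straight the four submodules $\Hilm_t\cdot\bar I_{t,u}$, $\Hilm_u\cdot\bar I_{t,u}$, $\Hilm_t\cdot\bar I_{t,v}$, $\Hilm_v\cdot\bar I_{t,v}$ and verifying that the partial isometries compose on the correct overlaps rather than merely formally — together with confirming that the norm-level proof of Lemma~\ref{lem:intersect_Hilm_tu} genuinely transfers to the self-dual $\Wst$-setting; the latter is routine because, as noted after Definition~\ref{def:S-action_Wstar} and in Example~\ref{exa:bidual_action}, the linking-algebra and Cohen--Hewitt arguments have exact weak-topology analogues, but it must be stated.
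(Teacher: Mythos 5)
Your construction of \(\Theta_{u,t}\) is a genuinely different route from the paper's: you bootstrap from the norm-level gluing of Lemma~\ref{lem:intersect_Hilm_tu}, extend to the weak closure by a Paschke-type extension/self-duality argument, and then extend by zero on the orthogonal complement, whereas the paper builds finite ``exhaustion'' partial isometries \(\theta^{v_1,\dotsc,v_n}_{u,t}\) directly out of the commuting central projections \(P_{vv^*}\) and takes a weak limit. In the \(\Wst\)\nb-setting the paper's route is actually simpler, because each \(\Hilm_t\cdot\s(\Hilm_v)=\Hilm_t\cdot P_{v^*v}=P_{vv^*}\cdot\Hilm_t\) is already a complemented, weakly closed submodule, so no Cohen--Hewitt or completion argument is needed. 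Two small slips in your version: \(\Hilm_t\cdot\bar{I}_{t,u}=\bigl(\bigvee_{v\le t,u}P_{vv^*}\bigr)\cdot\Hilm_t\), not \(\bigl(\bigvee P_{v^*v}\bigr)\cdot\Hilm_t\); and in the uniqueness argument a partial isometry extending the \(\theta^v_{u,t}\) does \emph{not} ``necessarily'' have initial space equal to the weak closure of the span of the \(\Hilm_t\cdot\s(\Hilm_v)\) -- that is precisely the extra condition in the statement -- and you must invoke weak continuity of bounded module maps between self-dual modules to pass from agreement on the norm-closed span \(\Hilm_t\cdot I_{t,u}\) to agreement on its weak closure.

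The genuine gap is in the last two identities. For \(\Theta_{u,t}\circ j_{t,v}=\Theta_{u,v}\) you only really treat \(v\le t,u\), and the reduction for general \(v\le t\) trails off unfinished. For the cocycle identity your weak-density argument covers only \(\xi\) in the pieces \(\Hilm_v\cdot(\bar{I}_{t,v}\cap\bar{I}_{u,v})\), where the norm-level relation of Lemma~\ref{lem:intersect_Hilm_tu} applies, whereas the claim is for all \(\xi\in\Hilm_v\cdot\bar{I}_{t,v}\); and your attempt to handle the rest (``\(\Theta_{t,v}\) maps \(\Hilm_v\cdot\bar{I}_{t,v}\) into the domain \(\Hilm_t\cdot\bar{I}_{u,t}\)'') is false as stated, since \(\Theta_{t,v}\) maps \(\Hilm_v\cdot\bar{I}_{t,v}\) onto \(\Hilm_t\cdot\bar{I}_{t,v}\), and the whole point is to show that the part killed by \(\Theta_{u,t}\) matches the part killed by \(\Theta_{u,v}\). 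The missing idea, which is how the paper closes this, is to prove the \(j\)\nb-identity first and in full generality by the uniqueness characterisation: for \(v\le t\) the range projection of \(j_{t,v}\) commutes with the source projection of \(\Theta_{u,t}\), so \(\Theta_{u,t}\circ j_{t,v}\) is again a partial isometry, and it has the defining properties of \(\Theta_{u,v}\) because \(w'\mapsto w'v^*v\) maps \(\{w'\le t,u\}\) onto \(\{w\le v,u\}\). Once that is available, the cocycle identity is checked on the generators \(j_{v,w}(\Hilm_w)\) with \(w\le t,v\), where \(\Theta_{u,t}\Theta_{t,v}j_{v,w}=\Theta_{u,t}j_{t,w}=\Theta_{u,w}=\Theta_{u,v}j_{v,w}\) -- no intersection with \(u\) ever enters, the source projection of \(\Theta_{u,w}\) accounting automatically for the vanishing parts -- and then extended to all of \(\Hilm_v\cdot\bar{I}_{t,v}\) by weak continuity, that submodule being the weakly closed span of the \(j_{v,w}(\Hilm_w)\). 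Your proof needs this (or an equivalent bookkeeping of source projections) to be complete.
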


\begin{proof}
  If \(v\le t\),
  then
  \(\Hilm_v \cong \Hilm_t \barotimes_M \Hilm_{v^* v} \cong \Hilm_t \cdot
  \s(\Hilm_v) = \Hilm_t \cdot P_{v^* v} = P_{v v^*} \cdot \Hilm_t\).
  The projections~\(P_{v v^*}\)
  for \(v\le t,u\)
  are central and hence commute with each other.  Therefore, if
  \(v_1,\dotsc,v_n\le t,u\),
  then there is a unique map that extends~\(\theta_{u,t}^{v_i}\)
  for \(i=1,\dotsc,n\)
  and has image \(\bigvee P_{v_i v_i^*}\cdot \Hilm_t\).
  One way to write it is
  \begin{multline*}
    \theta_{u,t}^{v_1,\dotsc,v_n}(\xi) \defeq
    \theta_{u,t}^{v_1}(P_{v_1 v_1^*}\xi)
    + \theta_{u,t}^{v_2}(P_{v_2 v_2^*}(1-P_{v_1 v_1^*})\xi)
    \\+ \theta_{u,t}^{v_3}(P_{v_3 v_3^*} (1-P_{v_2 v_2^*})
    (1-P_{v_1 v_1^*})\xi) + \dotsb
    \\+ \theta_{u,t}^{v_n}(P_{v_n v_n^*} (1-P_{v_{n-1} v_{n-1}^*}) \dotsm
    (1-P_{v_2 v_2^*}) (1-P_{v_1 v_1^*}) \xi).
  \end{multline*}
  We have defined partial isometries~\(\theta_{u,t}^F\)
  for each finite set \(F\subseteq S\)
  with \(v\le t,u\)
  for all \(v\in F\).
  If \(F\le F'\),
  then the partial isometry~\(\theta_{u,t}^{F'}\)
  agrees with~\(\theta_{u,t}^F\)
  on the image of its source projection, and merely extends it to a
  larger submodule.  Such a net of operators has a weak limit, and
  this limit has the properties required of~\(\Theta_{u,t}\)
  because \(\bigvee_{v\le t,u} P_{v v^*}\)
  generates~\(\bar{I}_{t,u}\) and is the weak limit of the range
  projections \(\bigvee_{v\in F} P_{v v^*}\) for~\(F\) as above.
  Any bounded operator \(\Hilm_t\to \Hilm_u\)
  is weakly continuous.  If its source projection is
  \(\bigvee_{v\le t,u} P_{v v^*}\),
  it is determined uniquely by its restriction to the images of
  \(\bigvee_{v\in F} P_{v v^*}\)
  for all finite sets~\(F\)
  as above.  Hence there is only one operator with the properties
  required of~\(\Theta_{t,u}\).

  The operator~\(\Theta_{t,u}^*\)
  satisfies the conditions that characterise~\(\Theta_{u,t}\),
  so \(\Theta_{t,u}^* = \Theta_{u,t}\).
  If \(v\le t\),
  the operator~\(j_{t,v}\)
  is an isometry whose range projection~\(P_{v v^*}\)
  commutes with the source projection of the partial
  isometry~\(\Theta_{u,t}\).
  Hence \(\Theta_{u,t}\circ j_{t,v}\)
  is again a partial isometry.  This satisfies the conditions that
  characterise~\(\Theta_{u,v}\)
  because the set of \(w\le v,u\)
  is exactly the set of all \(w' = w v^* v = v v^* w\)
  for \(w'\le t,u\).  So \(\Theta_{u,t}\circ j_{t,v} =
  \Theta_{u,v}\) if \(t,u,v\in S\) and \(v\le t\).  If \(\xi\in
  j_{t,w}(\Hilm_w) = \Hilm_t\cdot \s(\Hilm_w)\) for some \(w\in S\)
  with \(w\le t, v\), then \(\xi= j_{t,w}(\xi')\) for \(\xi'\in
  \Hilm_w\), and so
  \[
  \Theta_{u,t} \Theta_{t,v}(\xi)
  = \Theta_{u,t} \Theta_{t,v} j_{v,w}(\xi')
  = \Theta_{u,t} j_{t,w}(\xi')
  = \Theta_{u,w} (\xi')
  = \Theta_{u,t} j_{t,w}(\xi')
  = \Theta_{u,t} (\xi)
  \]
  because \(\Theta_{u',t'}\circ j_{t',v'} = \Theta_{u',v'}\) if
  \(v'\le t'\).  The set of \(\xi\in\Hilm_t\) with \(\Theta_{u,t}
  \Theta_{t,v}(\xi) = \Theta_{u,t} (\xi)\) is a weakly closed
  subspace.  Since \(\Hilm_t\cdot \bar{I}_{t,v}\) is the weakly
  closed linear span of \(j_{t,w}(\Hilm_w)\) for \(w\le t,v\), the
  equation \(\Theta_{u,t} \Theta_{t,v}(\xi) = \Theta_{u,t} (\xi)\)
  holds for all \(\xi\in \Hilm_t\cdot \bar{I}_{t,v}\).
\end{proof}

\begin{remark}
  In general, \(\Theta_{u,t}\circ\Theta_{t,v} \neq \Theta_{u,v}\).
  For instance, \(\Theta_{u,t}\circ\Theta_{t,u}\) is the identity
  map on \(\Hilm_u\cdot I_{t,u}\) and zero on its orthogonal
  complement in~\(\Hilm_u\), whereas~\(\Theta_{u,u}\) is the identity
  on~\(\Hilm_u\).
\end{remark}

We now define~\(M\rtimes_\alg S\) as the quotient of
\(\bigoplus_{t\in S} \Hilm_t\) by the linear span of
\(\Theta_{u,t}(\xi)\delta_u-\xi\delta_t\) for all \(t,u\in S\) and
\(\xi\in\Hilm_t\cdot \bar{I}_{t,u}\).  By construction
of~\(\bar{I}_{t,u}\) and~\(\Theta_{t,u}\), the linear span of the
elements of \(\Hilm_t \oplus \Hilm_u\) of the form
\(j_{u,v}(\xi)\delta_u-j_{t,v}(\xi)\delta_t\) for \(\xi\in\Hilm_v\)
and \(v\le t,u\) is
weakly dense in the space of all
\(\Theta_{u,t}(\xi)\delta_u-\xi\delta_t\) with \(\xi\in\Hilm_t\cdot
\bar{I}_{t,u}\).  Therefore, a linear map or a seminorm on
\(\bigoplus_{t\in S} \Hilm_t\) descends to~\(M\rtimes_\alg S\) if it
is weakly continuous on each summand and vanishes on
\(j_{u,v}(\xi)\delta_u-\xi\delta_v\) for \(u,v\in S\) with \(v\le
u\) and \(\xi\in\Hilm_v\).

The map
\begin{equation}
  \label{eq:cond.exp}
  E\colon \bigoplus_{t\in S} \Hilm_t \to M,\qquad
  \sum_{t\in S} \xi_t\delta_t \mapsto \sum_{t\in S} \Theta_{1,t}(\xi_t),
\end{equation}
is normal on each summand and vanishes on
\(j_{t,v}(\eta)\delta_t-\eta\delta_v\) for \(t\le v\),
\(\eta\in\Hilm_v\) because
Lemma~\ref{lem:intersect_Hilm_tu_self-dual} gives
\(\Theta_{1,t}\circ j_{t,v} = \Theta_{1,v}\) if \(v\le t\).
Hence~\eqref{eq:cond.exp} defines a map \(M\rtimes_\alg S\to M\),
which we also denote by~\(E\).  This is an \(M\)\nb-bimodule map
with \(E|_M=\Id_M\).

\begin{proposition}
  \label{pro:E_positive}
  The map \(E\colon M\rtimes_\alg S\to M\) is a faithful conditional
  expectation.  That is, \textup{(1)}
  \(E(\xi)^*=E(\xi^*)\), \textup{(2)} \(E(\xi^*\xi)\ge 0\) in \(M\)
  for all \(\xi\in M\rtimes_\alg S\), and \textup{(3)}
  \(E(\xi^*\xi)= 0\) only if
  \(\xi=0\).  Hence \((\xi,\eta)\mapsto \braket{\xi}{\eta} \defeq
  E(\xi^*\eta)\) defines an \(M\)\nb-valued inner product
  on~\(M\rtimes_\alg S\).
\end{proposition}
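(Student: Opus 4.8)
The plan is to prove properties (1), (2), (3) in turn, reducing each to the structure of the partial isometries $\Theta_{u,t}$ and the central projections $P_e$ that control them. Property (1) should be the easiest: for $\xi = \sum_t \xi_t\delta_t$ we have $\xi^* = \sum_t J_t(\xi_t^*)\delta_{t^*}$ (using the involutions $J_t$ that turn $M\rtimes_\alg S$ into a \Star{}algebra), so $E(\xi^*) = \sum_t \Theta_{1,t^*}(J_t(\xi_t^*))$. On the other hand $E(\xi)^* = \sum_t \Theta_{1,t}(\xi_t)^*$, and $\Theta_{1,t}(\xi_t)$ lies in $\Hilm_1 = M$, whose involution is the given one. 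So the claim reduces to the identity $\Theta_{1,t^*}(J_t(\eta^*)) = \Theta_{1,t}(\eta)^*$ in $M$ for each $\eta\in\Hilm_t$, which follows from Proposition~\ref{pro:crossed_product_section_algebra} applied to the (tautological) representation of the action underlying $M\rtimes_\alg S$ on itself, together with $\Theta_{1,t}\circ j_{t,v} = \Theta_{1,v}$ from Lemma~\ref{lem:intersect_Hilm_tu_self-dual}; alternatively it can be checked directly on the dense span of the $j_{t,v}(\Hilm_v)$, where $\Theta_{1,t}$ is literally the inclusion-followed-by-$\mu_{v,v}$-identification and the compatibility of $J$ with inclusions is known.

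For (2), the key point is that it suffices to check positivity of $E(\xi^*\xi)$ for $\xi$ of the special form $\xi = \sum_{t\in F}\xi_t\delta_t$ with $F\subseteq S$ finite, since these are dense and $E$ is norm-bounded on $\bigoplus_t\Hilm_t$. Expanding, $E(\xi^*\xi) = \sum_{s,t\in F}\Theta_{1,s^*t}\bigl(\mu_{s^*,t}(J_s(\xi_s^*)\otimes\xi_t)\bigr)$, but $\Theta_{1,s^*t}$ is nonzero only on the piece supported by the central projection corresponding to the ideal $\bar I_{1,s^*t}$, i.e.\ by $\bigvee_{v\le s^*t}P_{vv^*}$; equivalently one only sees contributions where $s^*t$ has a ``large enough'' idempotent part below it. The standard way to organise this — exactly as for reduced crossed products by inverse semigroups in \cite{Exel:noncomm.cartan} — is to pick a fine enough finite set of idempotents $e_1,\dots,e_k$ in $E(S)$, spectrally decompose each $\xi_t$ via the orthogonal projections $Q_i$ built from the $P_{e_i}$ (as in the $\theta_{u,t}^{v_1,\dots,v_n}$ formula in the proof of Lemma~\ref{lem:intersect_Hilm_tu_self-dual}), and observe that after this decomposition $E(\xi^*\xi)$ becomes a finite sum of terms each of which is manifestly of the form $\eta^*\eta$ for $\eta\in M$: concretely, for each idempotent $e$ one gets a contribution $\bigl(\sum_{t}\Theta_{1,te}(\xi_t P_{t^*te})\bigr)^*\bigl(\sum_{t}\Theta_{1,te}(\xi_t P_{t^*te})\bigr)$ or similar, using that $\Theta$ is multiplicative along the order (the last identity in Lemma~\ref{lem:intersect_Hilm_tu_self-dual}) to collapse the cross terms. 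Summing over the relevant idempotents gives $E(\xi^*\xi)\ge 0$.

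For (3), faithfulness, suppose $E(\xi^*\xi)=0$. From the decomposition in the previous paragraph, each of the nonnegative summands must vanish, which forces $\sum_t\Theta_{1,te}(\xi_t P_{t^*te}) = 0$ in $M$ for every idempotent $e$ in the chosen finite set. One then argues, letting the finite set of idempotents grow, that the ``diagonal'' contribution $e = t^*t$ for a single $t$ forces $\Theta_{1,t}(\xi_t)\cdot P$-pieces to cancel only against off-diagonal terms that live on strictly smaller central supports; a downward induction on the poset of idempotents below the $t^*t$ (finitely many are relevant for fixed finite $F$) peels these off and yields $\xi_t\cdot P_e = 0$ for all $t\in F$ and all the relevant $e$, hence $\xi_t = 0$ in $M\rtimes_\alg S$ for each $t$, i.e.\ $\xi = 0$. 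The main obstacle is precisely this bookkeeping in step (3): making the ``peeling off smallest central supports'' induction precise requires care because the relevant idempotents do not form a chain, only a finite meet-semilattice, so one must decompose along the atoms of the finite Boolean-type algebra generated by the $P_{e_i}$ rather than along a linear order — once that decomposition is fixed, both (2) and (3) become essentially the orthogonality relation $\Theta_{u,t}^*\Theta_{u,t} = $ (projection onto $\Hilm_u\cdot\bar I_{t,u}$) from Lemma~\ref{lem:intersect_Hilm_tu_self-dual} applied summand by summand. The final sentence of the proposition is then immediate: (1) and (2) say $(\xi,\eta)\mapsto E(\xi^*\eta)$ is a semidefinite $M$-valued form, and (3) says it is definite, so it is an $M$-valued inner product on $M\rtimes_\alg S$.
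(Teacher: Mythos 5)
Your part (1) is essentially right, and close to the paper's argument (the paper is even quicker: the map \(\xi\mapsto \Theta_{1,t^*}(\xi^*)^*\) is \(M\)\nb-bilinear, a partial isometry, and satisfies the properties in Lemma~\ref{lem:intersect_Hilm_tu_self-dual} that characterise \(\Theta_{1,t}\), so the two maps coincide); but note that invoking Proposition~\ref{pro:crossed_product_section_algebra} for a ``tautological representation'' is not licensed here, since at this stage \(M\rtimes_\alg S\) is not yet represented on any \(\Cst\)\nb-algebra --- producing such a representation is the point of the proposition being proved. The genuine gaps are in (2) and (3). A small one first: every element of \(M\rtimes_\alg S\) already \emph{is} a finite sum \(\sum_{t\in F}\xi_t\delta_t\), so no density argument is needed --- and for (3) none would be available, since faithfulness does not pass to limits. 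The main problem is your decomposition. Summing over idempotents \(e\) the squares of \(\sum_t\Theta_{1,te}(\xi_t P_{t^*te})\) cannot reproduce \(E(\xi^*\xi)\): the map \(\Theta_{1,te}\) vanishes off \(\Hilm_{te}\cdot\bar I_{1,te}\), so this expression only records the part of \(\xi\) that is identified with the unit fibre. Already for a group \(S\) (only idempotent \(1\), and \(\bar I_{1,t}=0\) for \(t\neq1\)) your formula gives \(\xi_1^*\xi_1\), whereas \(E(\xi^*\xi)=\sum_t\xi_t^*\xi_t\). Moreover, no finite set of idempotents is ``fine enough'': the source and range supports of the \(\Theta_{s,t}\) are the central projections of the weakly closed ideals \(\bar I_{s,t}\), which are infinite suprema of the \(P_{vv^*}\), so the atoms of the Boolean algebra generated by finitely many \(P_{e_i}\) need not be comparable with them; an approximation would salvage positivity but not faithfulness.

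The faithfulness sketch, as written, concludes that every component \(\xi_t\) of the chosen representative vanishes, and that conclusion is false: for an idempotent \(e\) and \(0\neq\eta\in\Hilm_e\), the representative \(\eta\delta_1-\eta\delta_e\) is zero in \(M\rtimes_\alg S\), hence \(E(\xi^*\xi)=0\), although its components are nonzero. So a correct argument must first \emph{change the representative} within its class, and this is exactly the step your proposal is missing and the step the paper takes: using the partial isometries one triangularises the representative, replacing \(\xi_i\delta_{t_i}\) by \(\Theta_{t_1,t_i}(\xi_i)\delta_{t_1}+(1-\Theta_{t_1,t_i}^*\Theta_{t_1,t_i})(\xi_i)\delta_{t_i}\) and repeating with \(t_2,t_3,\dotsc\), until \(\Theta_{t_i,t_j}(\bar\xi_j)=0\) for all \(i<j\). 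Combined with the identity \(E((\zeta\delta_t)^*\,\eta\delta_u)=\zeta^*\Theta_{t,u}(\eta)\) --- which rests on \(\bar I_{1,t^*u}=\bar I_{t,u}\) via the bijection \(e\mapsto te\), and which your expansion of \(E(\xi^*\xi)\) also uses tacitly without justification --- all cross terms then vanish and \(E(\xi^*\xi)=\sum_i\braket{\bar\xi_i}{\bar\xi_i}\), which gives (2) and (3) in one stroke, since vanishing of the normalised components does imply \(\xi=0\). Your atoms-plus-grouping idea could be repaired by taking the atoms of the algebra generated by the finitely many central supports \([\bar I_{t_i,t_j}]\) and grouping the summands on each atom by the relation ``\(t_i\) and \(t_j\) are identified there,'' but as written neither the displayed decomposition nor the peeling-off induction goes through.
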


\begin{proof}
  Condition~(1) holds in general once it holds for
  \(\xi=\xi\delta_t\) with \(\xi\in \Hilm_t\), \(t\in S\).  In this
  case, (1)~means that \(\Theta_{1,t}(\xi)^* =
  \Theta_{1,t^*}(\xi^*)\).  The map \(\Hilm_t\ni \xi\mapsto
  \Theta_{1,t^*}(\xi^*)^*\in \Hilm_1\) is also \(M\)\nb-bilinear
  and a partial isometry, and it has the properties in
  Lemma~\ref{lem:intersect_Hilm_tu_self-dual} that
  characterise~\(\Theta_{1,t}\).  Hence \(\Theta_{1,t}(\xi) =
  \Theta_{1,t^*}(\xi^*)^*\) for all \(\xi\in\Hilm_t\), as desired.

  To prove (2) and~(3), fix \(\xi\in M\rtimes_\alg S\).  Write \(\xi=
  \sum_{i=1}^n \xi_i\delta_{t_i}\) for \(t_i\in S\),
  \(\xi_i\in\Hilm_{t_i}\), \(i=1,\dotsc,n\).  First we choose a
  normal form of this representative of~\(\xi\).  (This normal form
  becomes unique if we fix some total order~\(\prec\) on the
  set~\(S\) and assume that \(t_1\prec t_2\prec \dotsb\prec t_n\).)
  First, we split \(\xi_i =
  \Theta_{t_1,t_i}^*\Theta_{t_1,t_i}(\xi_i) +
  (1-\Theta_{t_1,t_i}^*\Theta_{t_1,t_i})(\xi_i)\) for \(i\ge 2\).
  Thus
  \[
  \xi_i \delta_{t_i}
  \equiv \Theta_{t_1,t_i}(\xi_i) \delta_{t_1}
  + (1-\Theta_{t_1,t_i}^*\Theta_{t_1,t_i})(\xi_i) \delta_{t_i}
  \qquad \text{in }M\rtimes_\alg S
  \]
  for \(i\ge 2\).  Replacing~\(\xi_i\delta_{t_i}\) by the right hand
  side gives a new representative \(\xi = \sum_{i=1}^n \xi'_i
  \delta_{t_i}\) with the extra property \(\Theta_{t_1,
    t_i}(\xi'_i)=0\) for \(i=2,\dotsc,n\).  Next, we split \(\xi'_i
  = \Theta_{t_2,t_i}^*\Theta_{t_2,t_i}(\xi'_i) +
  (1-\Theta_{t_2,t_i}^*\Theta_{t_2,t_i})(\xi'_i)\) for \(i\ge 3\)
  and repeat the normalisation step above.  This gives a new
  representative \(\xi = \sum_{i=1}^n \xi''_i \delta_{t_i}\) with
  the extra property \(\Theta_{t_2, t_i}(\xi''_i)=0\) for
  \(i=3,\dotsc,n\); we still have \(\Theta_{t_1, t_i}(\xi''_i)=0\)
  for \(i=2,\dotsc,n\).  Continuing this way, we eventually arrive
  at a new representative \(\xi = \sum_{i=1}^n \bar\xi_i
  \delta_{t_i}\) with \(\Theta_{t_i, t_j}(\bar\xi_j)=0\) for all
  \(1\le i<j\le n\).

  By definition, \(E((\zeta\delta_t)^*\cdot \eta\delta_u) =
  \Theta_{1,t^*u}(\zeta^*\cdot \eta)\) for all \(t,u\in S\),
  \(\zeta\in\Hilm_t\), \(\eta\in\Hilm_u\).  We claim that this is
  equal to \(\zeta^*\cdot \Theta_{t,u}(\eta)\).  If \(e\le 1,t^*u\),
  then \(te\le t,u\); conversely, if \(v\le t,u\), then \(t^*v \le
  1,t^*u\).  The maps \(e\mapsto te\) and \(v\mapsto t^*v\) are
  bijective between the relevant subsets of~\(S\) because
  \(t^*te=e\) if \(e\le t^*u\) and \(tt^*v = v\) if \(v\le t\).
  Hence \(\bar I_{1,t^*u}=\bar I_{t,u}\); the
  compatibility of the embeddings~\(j_{u,t}\) with the
  multiplication maps~\(\mu\) implies \(\Theta_{1,t^*u}(\zeta^*\eta) =
  \zeta^*\Theta_{t,u}(\eta)\) for all \(\zeta\in\Hilm_t\),
  \(\eta\in\Hilm_u\).  Now the claim follows.  So
  \[
  E((\zeta\delta_t)^*\cdot \eta\delta_u)
  = \zeta^*\cdot \Theta_{t,u}(\eta)\qquad
  \text{for all }t,u\in S,\ \zeta\in\Hilm_t,\ \eta\in\Hilm_u.
  \]

  Our normalisation condition \(\Theta_{t_i, t_j}(\bar\xi_j)=0\)
  for all \(1\le i<j\le n\) gives
  \[
  E((\bar\xi_i\delta_{t_i})^* \cdot \bar\xi_j\delta_{t_j})
  = \bar\xi_i^* \Theta_{t_i,t_j}(\bar\xi_j)
  = 0
  \]
  if \(i<j\).  Since \(E(\eta^*)=E(\eta)^*\), this also vanishes for
  \(i>j\), so
  \[
  E(\xi^*\xi) = \sum_{i=1}^n \bar\xi_i^*\bar\xi_i
  = \sum_{i=1}^n \braket{\bar\xi_i}{\bar\xi_i}_{\Hilm_{t_i}}.
  \]
  Thus \(E(\xi^*\xi)\ge0\) in~\(M\), and \(E(\xi^*\xi)=0\) in~\(M\)
  only if \(\braket{\bar\xi_i}{\bar\xi_i}_{\Hilm_{t_i}}=0\) for
  all~\(i\), that is, only if \(\xi=0\).
\end{proof}

Proposition~\ref{pro:E_positive} allows us to
complete~\(M\rtimes_\alg S\) to a Hilbert module~\(\ell^2(S,M)\)
over~\(M\).  The left multiplication action of~\(M\rtimes_\alg S\)
on itself extends to a unital left action~\(\lambda\) of
\(M\rtimes_\alg S\) on~\(\ell^2(S,M)\) by adjointable operators.
This representation is faithful because~\(E\) is faithful
on~\(M\rtimes_\alg S\): if \(\lambda(\xi)=0\) for \(\xi\in
M \rtimes_\alg S\), then \(\lambda(\xi)(1) = \xi=0\)
in~\(\ell^2(S,M)\); this gives \(\braket{\xi}{\xi} = E(\xi^* \xi)
=0\) and hence \(\xi=0\).  The module~\(\ell^2(S,M)\) need not be
self-dual again, so we replace it by its self-dual
completion~\(\bar{\ell}^2(S,M)\).  We still
represent~\(M\rtimes_\alg S\) on~\(\bar{\ell}^2(S,M)\) by left
multiplication.  Since~\(\bar{\ell}^2(S,M)\) is self-dual, any
bounded \(M\)\nb-linear operator on it is adjointable, and these
operators form a \(\Wst\)\nb-algebra \(\Bound(\bar{\ell}^2(S,M))\).

\begin{definition}
  \label{def:Wstar_crossed_product}
  The \emph{\(\Wst\)\nb-algebra crossed product}
  \(M\barrtimes S\) for an action~\((\Hilm_t,\mu_{t,u})\)
  of~\(S\) on a \(\Wst\)\nb-algebra~\(M\) by self-dual Hilbert
  bimodules is defined as the weak closure of
  \(\lambda(M\rtimes_\alg S)\) in the \(\Wst\)\nb-algebra
  \(\Bound(\bar{\ell}^2(S,M))\).
\end{definition}

By construction, \(\lambda\) gives an injective \Star{}homomorphism
\(M\rtimes_\alg S \hookrightarrow M\barrtimes S\).  The
map~\(E\) above extends to a faithful conditional expectation
\(M\barrtimes S\to M\), namely, \(E(T) = \iota^* \circ
T\circ \iota\), where \(\iota\colon M\to \bar\ell^2(S,M)\) is the
inclusion of the summand \(M=\Hilm_1\).

\section{The reduced crossed product and induction}
\label{sec:reduced_Cstar-crossed}

Now we return to the \(\Cst\)\nb-algebraic
case.  Let~\((\Hilm_t,\mu_{t,u})\)
be an action of~\(S\)
on a \(\Cst\)\nb-algebra~\(A\)
by Hilbert bimodules.  Then \((\bid{\Hilm_t}, \bid{\mu_{t,u}})\)
is an action of~\(S\)
on~\(\bid{A}\)
by self-dual Hilbert bimodules.  The representation of
\(\bid{A}\rtimes_\alg S\)
on~\(\bar{\ell}^2(S,\bid{A})\)
restricts to a \Star{}homomorphism on~\(A\rtimes_\alg S\).
This extends to a \Star{}homomorphism
\(A\rtimes S\to \Bound(\bar{\ell}^2(S,\bid{A}))\).

\begin{definition}
  \label{def:reduced_Cstar-crossed}
  The \emph{reduced crossed product}~\(A\rtimes_\red S\) is the
  image of~\(A\rtimes S\) in \(\Bound(\bar\ell^2(S,\bid{A}))\), the
  \(\Wst\)\nb-algebra of adjointable operators on
  \(\bar\ell^2(S,\bid{A})\).
\end{definition}

By construction, \(A\rtimes_\red S\) is contained in the
\(\Wst\)\nb-algebra crossed product \(\bid A\barrtimes S\).

\begin{remark}
  \label{rem:barell2_versus_ell2}
  Every adjointable operator on~\(\ell^2(S,\bid A)\) extends
  uniquely to the self-dual completion~\(\bar{\ell}^2(S,\bid A)\),
  and this gives a unital embedding of \(\Bound(\ell^2(S,\bid A))\)
  into \(\Bound(\bar{\ell}^2(S,\bid A))\).  Since \(A\rtimes_\alg
  S\) and hence also the image of \(A\rtimes S\) map the Hilbert
  submodule~\(\ell^2(S,\bid A)\) into itself by adjointable
  operators, \(A\rtimes_\red S\) is contained in the image of
  \(\Bound(\ell^2(S,\bid A))\) in \(\Bound(\bar{\ell}^2(S,\bid
  A))\).  Hence it makes no difference whether we use
  \(\ell^2(S,\bid A)\) or \(\bar{\ell}^2(S,\bid A)\) to
  define~\(A\rtimes_\red S\).
\end{remark}

\begin{proposition}
  \label{pro:embed_alg_red}
  The canonical \Star{}homomorphisms from~\(A\rtimes_\alg S\) to
  \(\bid{A}\rtimes_\alg S\), \(A\rtimes_\red S\), and \(A\rtimes S\)
  are injective.
\end{proposition}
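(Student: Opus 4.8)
The plan is to reduce everything to the faithfulness of the weak conditional expectation. The key point is that all three target algebras carry a natural faithful conditional expectation onto (a completion related to) the unit fibre, compatible with the obvious map from \(A\rtimes_\alg S\), so injectivity of that map follows from the faithfulness of \(E\) on the algebraic level.

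First I would recall that \(A\rtimes_\alg S\) sits inside \(\bid{A}\rtimes_\alg S\) simply as a \Star{}subalgebra: both are quotients of direct sums \(\bigoplus_{t\in S}\Hilm_t\) resp.\ \(\bigoplus_{t\in S}\bid{\Hilm_t}\), and the canonical inclusions \(\Hilm_t\hookrightarrow\bid{\Hilm_t}\) descend to the quotients because, by the discussion preceding Lemma~\ref{lem:intersect_Hilm_tu_self-dual} and the paragraph introducing \(M\rtimes_\alg S\), the subspace divided out on the \(\Cst\)\nb-side is mapped into the subspace divided out on the \(\Wst\)\nb-side (the elements \(\theta_{u,t}(\xi)\delta_u-\xi\delta_t\) with \(\xi\in\Hilm_t\cdot I_{t,u}\) become \(\Theta_{u,t}(\xi)\delta_u-\xi\delta_t\) with \(\xi\in\bid{\Hilm_t}\cdot\bar I_{t,u}\), since \(\Theta_{u,t}\) extends \(\theta_{u,t}\) and \(\bar I_{t,u}\) is the bidual of \(I_{t,u}\)). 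To see injectivity, suppose \(\sum_i\xi_i\delta_{t_i}\) represents an element of \(A\rtimes_\alg S\) that maps to \(0\) in \(\bid{A}\rtimes_\alg S\). Apply Proposition~\ref{pro:E_positive} (with \(M=\bid A\)): the faithful conditional expectation \(E\colon\bid A\rtimes_\alg S\to\bid A\) then yields \(E(\xi^*\xi)=0\) in \(\bid A\), hence \(\xi=0\) already in \(\bid A\rtimes_\alg S\); since the coefficient module structures on \(\bigoplus_t\Hilm_t\) embed isometrically into those on \(\bigoplus_t\bid{\Hilm_t}\) via Proposition~\ref{pro:E_positive}'s formula \(E(\xi^*\xi)=\sum_i\braket{\bar\xi_i}{\bar\xi_i}\), the normal-form argument shows \(\xi=0\) in \(A\rtimes_\alg S\) as well. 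This proves injectivity into \(\bid A\rtimes_\alg S\).

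Next, \(A\rtimes_\red S\) is by Definition~\ref{def:reduced_Cstar-crossed} the image of \(A\rtimes S\) in \(\Bound(\bar\ell^2(S,\bid A))\), and by the construction preceding Definition~\ref{def:Wstar_crossed_product} the representation \(\lambda\) of \(\bid A\rtimes_\alg S\) on \(\ell^2(S,\bid A)\) (hence on \(\bar\ell^2(S,\bid A)\)) is faithful, because \(\lambda(\xi)(1)=\xi\) recovers the element and \(E(\xi^*\xi)=\braket{\xi}{\xi}\) vanishes only for \(\xi=0\). Restricting \(\lambda\) along the \Star{}embedding \(A\rtimes_\alg S\hookrightarrow\bid A\rtimes_\alg S\) just established, we get a faithful representation of \(A\rtimes_\alg S\); but this restricted representation is exactly the composite \(A\rtimes_\alg S\to A\rtimes_\red S\subseteq\Bound(\bar\ell^2(S,\bid A))\). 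Hence \(A\rtimes_\alg S\to A\rtimes_\red S\) is injective.

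Finally, the map \(A\rtimes_\alg S\to A\rtimes S\) factors the injective map \(A\rtimes_\alg S\to A\rtimes_\red S\) (the reduced algebra is a quotient of \(A\rtimes S\), and all three maps are compatible by construction: the representation \(\lambda\) on \(\bar\ell^2(S,\bid A)\) is one of the representations defining \(A\rtimes S\), so it extends to \(A\rtimes S\to\Bound(\bar\ell^2(S,\bid A))\) with image \(A\rtimes_\red S\)). An injective map that factors through another map forces the first factor to be injective, so \(A\rtimes_\alg S\to A\rtimes S\) is injective too. The main obstacle is the first step—checking that the \Star{}algebra structure on \(A\rtimes_\alg S\) is genuinely inherited from \(\bid A\rtimes_\alg S\), i.e.\ that the relations defining the two algebras match up under the fibrewise inclusions; once the \(\Wst\)\nb-side relations are seen to contain the \(\Cst\)\nb-side ones (which is the content of the remarks preceding the two \(\rtimes_\alg\) definitions and of Lemma~\ref{lem:intersect_Hilm_tu_self-dual}), the rest is a formal transport of faithfulness of \(E\).
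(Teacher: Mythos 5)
Your reduction of the second and third injectivity claims to the first one is fine and matches the paper: once \(A\rtimes_\alg S\to \bid{A}\rtimes_\alg S\) is injective, the embeddings \(A\rtimes_\red S\hookrightarrow \bid{A}\barrtimes S\hookleftarrow \bid{A}\rtimes_\alg S\) and the factorisation through \(A\rtimes S\) give the rest. The problem is the first step, which is where all the content of the proposition lies, and there your argument has a genuine gap. If \(\sum_i\xi_i\delta_{t_i}\) maps to \(0\) in \(\bid{A}\rtimes_\alg S\), then invoking the faithfulness of \(E\) on \(\bid{A}\rtimes_\alg S\) (Proposition~\ref{pro:E_positive}) is vacuous: the image is zero by hypothesis, so \(E(\xi^*\xi)=0\) tells you nothing beyond that. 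What has to be shown is that the representative \(\sum_i\xi_i\delta_{t_i}\in\bigoplus_{t\in F}\Hilm_t\) then lies in the \emph{\(\Cst\)-level} relation space \(W_F\), the linear span of \(\theta_{u,t}(\eta)\delta_u-\eta\delta_t\) with \(\eta\in\Hilm_t\cdot I_{t,u}\). A priori it only lies in the \emph{\(\Wst\)-level} relation space, which is built from \(\Theta_{u,t}(\eta)\delta_u-\eta\delta_t\) with \(\eta\in\bid{\Hilm_t}\cdot\bar{I}_{t,u}\), and that space is genuinely larger: \(\bar I_{t,u}\) is the weak closure of \(I_{t,u}\), and the projections \(\Theta_{u,t}^*\Theta_{u,t}\) used in the normal-form argument project onto \(\bid{\Hilm_t}\cdot\bar I_{t,u}\) and do \emph{not} map \(\Hilm_t\) into itself. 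So "the normal-form argument shows \(\xi=0\) in \(A\rtimes_\alg S\) as well" is exactly the assertion being proved, not a consequence of isometry of the fibre inclusions \(\Hilm_t\hookrightarrow\bid{\Hilm_t}\); the passage to the normal form uses relations that are not available in \(A\rtimes_\alg S\).

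The paper's proof is devoted precisely to closing this gap, in three steps that have no counterpart in your proposal: (a) a combinatorial rewriting argument showing that any finite linear combination of relation elements that happens to lie in \(\bigoplus_{t\in F}\Hilm_t\) can be rewritten using only indices in \(F\), so the candidate kernel is \(W_F\); (b) an induction on \(\abs{F}\), exhibiting \(W_F\) as a Banach space extension of \(\Hilm_t\cdot I\) by \(W_{F'}\), which proves \(W_F\) is norm closed; and (c) a Hahn--Banach (bipolar) argument: since the source and range modules of the \(\Theta_{t,u}\) are the weak closures of \(\Hilm_u\cdot I_{t,u}\) and \(\Hilm_t\cdot I_{t,u}\), the kernel of \(\bigoplus_{t\in F}\Hilm_t\to\bid{A}\rtimes_\alg S\) is contained in the weak closure of \(W_F\), and a norm-closed subspace of a Banach space is weakly closed, so the kernel equals \(W_F\). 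Without an argument of this kind (or some substitute ruling out that the \(\Wst\)-relations meet \(\bigoplus_{t\in F}\Hilm_t\) in more than \(W_F\)), your proof of the first embedding, and hence of the whole proposition, does not go through.
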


\begin{proof}
  By definition, \(A\rtimes_\alg S\) is a quotient of the direct sum
  \(\bigoplus_{t\in S} \Hilm_t\).  Let \(F\subseteq S\) be a finite
  subset.  We are going to prove that the kernel of the map
  \(\bigoplus_{t\in F} \Hilm_t \to \bid{A}\rtimes_\alg S\) is the
  linear span~\(W_F\) of \(\theta_{t,u}(x)\delta_t - x\delta_u\) for
  \(t,u\in F\), \(x\in \Hilm_u\cdot I_{t,u}\).  Therefore, the
  canonical \Star{}homomorphism from~\(A\rtimes_\alg S\) to
  \(\bid{A} \rtimes_\alg S\) is injective.  Then the map
  from~\(A\rtimes_\alg S\) to~\(A\rtimes_\red S\) is injective
  because \(A\rtimes_\red S \hookrightarrow \bid{A} \barrtimes S
  \hookleftarrow \bid{A}\rtimes_\alg S\).  And then the map
  from~\(A\rtimes_\alg S\) to~\(A\rtimes S\) is injective because
  the injective map to~\(A\rtimes_\red S\) factors through it.  Thus
  everything follows from the above description of the kernel of the
  map \(\bigoplus_{t\in F} \Hilm_t \to \bid{A}\rtimes_\alg S\).

  Let~\(F'\) be another finite subset of~\(S\) with \(F\subseteq F'\),
  and assume that \(\xi \defeq \sum_{t,u\in F'}
  \theta_{t,u}(x_{t,u})\delta_t - x_{t,u}\delta_u\) with \(x_{t,u}\in
  \Hilm_u\cdot I_{t,u}\) for all \(t,u\in F'\) belongs to
  \(\bigoplus_{t\in F} \Hilm_t\delta_t\).  We claim that \(\xi\in
  W_F\), that is, we may rewrite \(\xi= \sum_{t,u\in F}
  \theta_{t,u}(y_{t,u})\delta_t - y_{t,u}\delta_u\) with \(y_{t,u}\in
  \Hilm_u\cdot I_{t,u}\) for all \(t,u\in F\) with \(t,u\in F\)
  instead of \(t,u\in F'\).  There is nothing to prove if \(F=F'\), so
  choose some \(v\in F'\setminus F\).  It suffices to rewrite~\(\xi\)
  as a sum over \(t,u\in F'\setminus\{v\}\): if we can always do this,
  then we may go on and remove the other elements of \(F'\setminus
  F\), until we bring~\(\xi\) into the desired form.  Thus we may
  assume without loss of generality that \(F'= F\cup\{v\}\).

  Since \(\theta_{t,u}= \theta_{u,t}^{-1}\), we may assume that the
  summands in~\(\xi\) containing~\(v\) all have~\(v\) as the second
  entry.  Any summand with \(t=u=v\) is~\(0\) because
  \(\theta_{v,v}=\Id_{\Hilm_v}\), so we may remove such a summand from
  our representation of~\(\xi\).  Let~\(F''\) be the set of \(t\in
  F'\) for which~\(\xi\) contains a summand of the form
  \(\theta_{t,v}(x_{t,v})\delta_t - x_{t,v}\delta_v\).  Since
  \(v\notin F''\), we have \(F''\subseteq F\).  There is nothing to do
  if~\(F''\) is empty.  So we assume that~\(F''\) is non-empty and pick
  some \(w\in F''\).  We are going to rewrite~\(\xi\) so that only
  summands for~\((t,v)\) with \(t\in F''\setminus\{w\}\) appear.  If
  we can do this, we may repeat this step and remove all points
  from~\(F''\), until we arrive at a sum that does not involve~\(v\)
  any more.  Thus it suffices to prove that we may reduce~\(F''\)
  to~\(F''\setminus\{w\}\).

  The \(\delta_v\)\nb-component of~\(\xi\) is the sum \(\sum_{t\in
    F''} x_{t,v} \delta_v\).  This must vanish because \(v\notin F\).
  Thus \(x_{w,v} = - \sum_{t\in F''\setminus \{w\}} x_{t,v}\).  This
  belongs to \(\sum_{t\in F''\setminus \{w\}} \Hilm_v\cdot I_{t,v}\)
  and to \(\Hilm_v \cdot I_{w,v}\).  This intersection is
  \(\Hilm_v\cdot I\) with \(I = I_{w,v} \cap \sum_{t\in F''\setminus
    \{w\}} I_{t,v} = \sum_{t\in F''\setminus \{w\}} (I_{w,v} \cap
  I_{t,v})\) because the map \(I\mapsto \Hilm_v\cdot I\) is a
  lattice isomorphism from the lattice of ideals \(I\idealin
  \s(\Hilm_v)\) onto the lattice of Hilbert subbimodules
  in~\(\Hilm_v\).
  Thus we may rewrite \(x_{w,v} = \sum_{t\in F''\setminus
    \{w\}} x_{w,v,t}\) with \(x_{w,v,t} \in I_{w,v} \cap I_{t,v}\).
  Then \(\theta_{w,v}(x_{w,v,t}) =
  \theta_{w,t}\theta_{t,v}(x_{w,v,t})\)
  by Lemma~\ref{lem:intersect_Hilm_tu}, so that
  \[
  \theta_{w,v}(x_{w,v,t})\delta_w - x_{w,v,t}\delta_v
  = \theta_{w,t}\theta_{t,v}(x_{w,v,t})\delta_w - \theta_{t,v}(x_{w,v,t})\delta_t
  + \theta_{t,v}(x_{w,v,t})\delta_t - x_{w,v,t}\delta_v.
  \]
  When we substitute this in~\(\xi\) for all \(t\in
  F''\setminus\{w\}\), we replace the summand
  \(\theta_{w,v}(x_{w,v,t})\delta_w - x_{w,v,t}\delta_v\)
  for~\((w,v)\) by summands for \((w,t)\) and~\((t,v)\) for \(t\in
  F''\setminus\{w\}\).  Since \(t,w\in F\), this achieves the
  reduction step that we still need.  This finishes the proof
  that~\(W_F\) is the set of all finite linear combinations of
  \(\theta_{t,u}(x_{t,u})\delta_t - x_{t,u}\delta_u\) with
  \(x_{t,u}\in \Hilm_u\cdot I_{t,u}\) and \(t,u\in S\) that belong to
  \(\bigoplus_{t\in F} \Hilm_t\delta_t\).

  Next we prove that \(W_F\subseteq \bigoplus_{t\in F}
  \Hilm_t\delta_t\) is closed in the norm topology for each finite
  subset \(F\subseteq S\).

  We prove this by induction on the size of~\(F\).  If~\(F\) is empty,
  the assertion is trivial.  So let \(\abs{F}\ge 1\) and pick \(t\in
  F\).  Let \(F'\defeq F\setminus \{t\}\) and assume that~\(W_{F'}\)
  is norm closed.  Let \(I = \sum_{u\in F'} I_{t,u}\).  This is a
  closed ideal in~\(A\), as a sum of finitely many closed ideals.
  Hence \(\Hilm_t \cdot I\subseteq \Hilm_t\) is closed and contains
  \(\Hilm_t \cdot I_{t,u}\) for all \(u\in F'\).  As in the proof of
  the claim above, the closure of~\(W_F\) can only contain
  \(\sum_{u\in F} x_u \delta_u \in \bigoplus_{u\in F} \Hilm_u\) if
  \(x_t\in \Hilm_t\cdot I\).  In that case, we may write \(x_t =
  x_t^0\cdot a\) with \(x_t^0\in \Hils_t\), \(a\in I\), and
  \(\norm{x_t} = \norm{x_t^0}\), \(\norm{a}< 1+\varepsilon\) for any
  \(\varepsilon>0\).  Moreover, \(a=\sum_{u\in F'} a_u\) with \(a_u\in
  I_{t,u}\) and \(\norm{a_u}< 1+\varepsilon\) for all \(u\in F'\).
  Then
  \[
  \sum_{u\in F} x_u \delta_u - \sum_{u\in F'}
  (\theta_{u,t}(x_t^0\cdot a_u)\delta_u - x_t^0\cdot a_u\delta_t)
  \in \bigoplus_{u\in F'} \Hilm_u.
  \]
  This term no longer involves the summand~\(\Hilm_t\).  By our first
  claim above, this sum belongs to~\(W_{F'}\).

  The argument above shows that \(W_F/W_{F'} \cong \Hilm_t\cdot I\),
  where the quotient norm is equivalent to the norm from~\(\Hilm_t\).
  Since~\(W_{F'}\) is closed by induction assumption, \(W_F\) is an extension of
  the Banach space \(\Hilm_t\cdot I\) by the Banach space~\(W_{F'}\).
  This implies that~\(W_F\) is complete in the subspace topology
  from \(\bigoplus_{u\in F} \Hilm_u\), so that~\(W_F\) must be closed
  in the norm topology.

  The range and source submodules of the partial isometries
  \(\Theta_{t,u}\colon \Hilm_u\to\Hilm_t\) that appear in the
  definition of~\(\bid{A}\rtimes_\alg S\) are the weak closures of
  \(\Hilm_t\cdot I_{t,u}\) and \(\Hilm_u\cdot I_{t,u}\), respectively.
  Therefore, the kernel of the map \(\bigoplus_{t\in F} \Hilm_t \to
  \bid{A} \rtimes_\alg S\) is contained in the weak closure
  of~\(W_F\).  By the Hahn--Banach Theorem, a subspace of
  the Banach space \(\bigoplus_{t\in F} \Hilm_t\) is weakly closed if
  and only if it is norm closed: the norm and the weak topology have
  the same closed convex subsets.  We have shown that~\(W_F\) is norm
  closed, hence weakly closed.  This finishes the proof.
\end{proof}

By definition, \(\ell^2(S,\bid{A})\)
is a \(\Cst\)\nb-correspondence
from~\(A\rtimes_\red S\)
to~\(\bid{A}\).
As explained in \cite{Rieffel:Morita}*{p.~65}, this
\(\Cst\)\nb-correspondence
gives a functor from the \(\Wst\)\nb-category
of Hilbert space representations of~\(A\)
to that of~\(A\rtimes_\red S\):

\begin{definition}
  \label{def:induction}
  The \emph{induction} functor~\(\Ind\)
  from representations of~\(A\)
  to representations of~\(A\rtimes_\red S\)
  maps a representation \(\pi\colon A\to \Bound(\Hils)\)
  to the representation of \(A\rtimes_\red S\)
  on \(\ell^2(S,\bid{A})\otimes_{\bid{\pi}} \Hils\),
  where~\(\bid{\pi}\)
  is the unique weakly continuous extension of~\(\pi\) to~\(\bid{A}\).
\end{definition}

The \(\Cst\)\nb-norm
on~\(A\rtimes_\red S\)
is the supremum of the norms in~\(\Ind \pi\)
for all representations~\(\pi\)
of~\(A\).
We may also take a single faithful representation of~\(\bid{A}\).
For instance, the direct sum of the GNS-representations for all states
of~\(A\)
gives a faithful representation of~\(\bid{A}\).
Hence the norm defining~\(A\rtimes_\red S\)
is equal to the supremum of the norm in~\(\Ind \pi\),
where~\(\pi\)
now runs through the set of GNS-representations for all states
on~\(A\).

But is it enough to take only the irreducible representations as
in~\cite{Exel:noncomm.cartan}?  Can we even take any faithful
representation of~\(A\)?
Notice that the direct sum of all irreducible representations of~\(A\)
is always faithful on~\(A\),
but its extension to~\(\bid{A}\)
need not be faithful (see
\cite{Pedersen:Cstar_automorphisms}*{4.3.11}).

To answer the above questions (the first positively, the second
negatively), we study the range of the map \(E\colon A\rtimes_\red
S\to \bid{A}\).  By definition, \(E\) is the restriction
to~\(A\rtimes_\red S\) of the conditional expectation \(\bid
A\barrtimes S\to \bid A\) constructed in the last section,
see~\ref{eq:cond.exp}.  We shall sometimes view~\(E\) as a map from
the full crossed product~\(A\rtimes S\) to~\(\bid A\), and call it
the \emph{weak conditional expectation} associated to the action.
The following lemma describes~\(E\) in the \(\Cst\)\nb-algebraic
setting:

\begin{lemma}
  \label{lem:description-E}
  Let \(t\in S\) and let~\(\theta_{1,t}\) denote the canonical
  isomorphism \(\Hils_t\cdot I_{1,t}\congto I_{1,t}\) as in
  Lemma~\textup{\ref{lem:intersect_Hilm_tu}}.  View the multiplier
  algebra of~\(I_{1,t}\) as a subalgebra of~\(\bid A\) in the usual
  way.  The map \(E\colon A\rtimes_\red S\to \bid{A}\) maps
  \(\Hils_t\subseteq A\rtimes_\red S\) into \(\Mult(I_{1,t})
  \subseteq \bid{A}\).  More precisely, \(E(\xi\delta_t)\) for
  \(\xi\in \Hils_t\) is the multiplier of~\(I_{1,t}\) given by
  \(E(\xi\delta_t)x=\theta_{1,t}(\xi\cdot x)\) for all \(x\in
  I_{1,t}\).  If~\((u_{t,i})\) is an approximate unit
  for~\(I_{1,t}\), then
  \begin{equation}
    \label{eq:formula-cond.exp}
    E(\xi\delta_t)=\stlim_{i}\theta_{1,t}(\xi\cdot u_{t,i}),
  \end{equation}
  where \(\stlim\) denotes the limit in the strict topology
  on~\(\Mult(I_{1,t})\).
\end{lemma}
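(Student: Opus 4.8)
The plan is to unwind the definition of~$E$ from Section~\ref{sec:act_Wstar} in the special case $M = \bid A$ and to identify the partial isometry~$\Theta_{1,t}$ with the closure of the Hilbert-bimodule isomorphism~$\theta_{1,t}$ from Lemma~\ref{lem:intersect_Hilm_tu}. Recall that by construction \(E(\xi\delta_t) = \Theta_{1,t}(\xi)\), where \(\Theta_{1,t}\colon \bid{\Hilm_t}\to \bid{A}\) is the partial isometry produced in Lemma~\ref{lem:intersect_Hilm_tu_self-dual}, whose source projection is the central projection generating \(\bar I_{1,t} = \bid{(I_{1,t})}\). First I would observe that \(\Theta_{1,t}\) restricts on the weakly dense subspace \(\Hilm_t\cdot I_{1,t}\subseteq \bid{\Hilm_t}\) to the map \(\theta_{1,t}\) of Lemma~\ref{lem:intersect_Hilm_tu}, since both are characterised by restricting to \(\theta^v_{1,t}\) on \(\Hilm_t\cdot\s(\Hilm_v)\) for all \(v\le 1,t\) (i.e. for all idempotents \(v\le t\)). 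Hence for \(\xi\in\Hilm_t\) and \(x\in I_{1,t}\) we get \(E(\xi\delta_t)\cdot x = \Theta_{1,t}(\xi)\cdot x = \Theta_{1,t}(\xi\cdot x) = \theta_{1,t}(\xi\cdot x)\in I_{1,t}\), using \(\bid A\)-bilinearity of~\(\Theta_{1,t}\) and the fact that \(\xi\cdot x\in\Hilm_t\cdot I_{1,t}\).

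Next I would argue that \(E(\xi\delta_t)\), a priori an element of~\(\bid A\), actually lies in \(\Mult(I_{1,t})\subseteq\bid A\). The source projection of \(\Theta_{1,t}\) is the central projection \(P\in\bid A\) with \(P\bid A = \bar I_{1,t}\), and the relation \(\Theta_{1,t} = \Theta_{1,t}P\) gives \(E(\xi\delta_t) = E(\xi\delta_t)P\); symmetrically, applying the same reasoning on the left (or using \(\Theta_{1,t}\Theta_{1,t}^* \le P'\) with \(P'\bid A=\bar I_{1,t}\) as well, since the range submodule is \(\bid A\cdot\bar I_{1,t}=\bar I_{1,t}\)) shows \(E(\xi\delta_t)\in \bar I_{1,t}\) times the relevant projections on both sides, hence \(E(\xi\delta_t)\) multiplies \(I_{1,t}\) into \(I_{1,t}\) on both sides. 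Thus \(E(\xi\delta_t)\in\Mult(I_{1,t})\), and the previous paragraph shows it is precisely the multiplier \(x\mapsto\theta_{1,t}(\xi\cdot x)\); that this formula does define a multiplier (bounded, two-sided, adjoint compatible) is immediate from \(\theta_{1,t}\) being a Hilbert-bimodule isomorphism, so boundedness is automatic.

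Finally, for the strict-limit formula~\eqref{eq:formula-cond.exp}, I would note that for any \(x\in I_{1,t}\) we have \(\theta_{1,t}(\xi\cdot u_{t,i})\cdot x = \theta_{1,t}(\xi\cdot u_{t,i} x)\to \theta_{1,t}(\xi\cdot x) = E(\xi\delta_t)x\) in norm, since \(u_{t,i}x\to x\) and \(\theta_{1,t}\) is bounded; and similarly \(x\cdot\theta_{1,t}(\xi\cdot u_{t,i}) = E((x^*\xi)\delta_t)^* \)-type manipulations (using \(E(\eta^*)=E(\eta)^*\), i.e. \(\Theta_{1,t}(\xi)^* = \Theta_{1,t^*}(\xi^*)\) from Proposition~\ref{pro:E_positive}) give norm convergence on the left as well. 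Norm convergence on a dense set, together with the uniform bound \(\norm{\theta_{1,t}(\xi\cdot u_{t,i})}\le\norm\xi\), yields strict convergence of the net \(\theta_{1,t}(\xi\cdot u_{t,i})\) to \(E(\xi\delta_t)\) in \(\Mult(I_{1,t})\). The only mildly delicate point is keeping the left-module computations honest—one has to pass through the involution~\(J_t\) and the identity \(\Theta_{1,t}(\xi)^*=\Theta_{1,t^*}(\xi^*)\)—but this is routine given Proposition~\ref{pro:E_positive} and Lemma~\ref{lem:intersect_Hilm_tu_self-dual}; the main conceptual step is simply the identification of \(\Theta_{1,t}\restriction_{\Hilm_t\cdot I_{1,t}}\) with \(\theta_{1,t}\).
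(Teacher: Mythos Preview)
Your proposal is correct and follows essentially the same line as the paper: identify \(\Theta_{1,t}\restriction_{\Hilm_t\cdot I_{1,t}}\) with~\(\theta_{1,t}\), then use \(\bid A\)-bilinearity of~\(\Theta_{1,t}\) together with the fact that its source and range projections are~\([I_{1,t}]\) to land in~\(\Mult(I_{1,t})\). The paper packages this slightly differently---it passes through the multiplier module \(\Mult(\Hilm_t\cdot I_{1,t})=\Bound(I_{1,t},\Hilm_t\cdot I_{1,t})\) and argues by uniqueness of the strictly and weakly continuous extensions of~\(\theta_{1,t}\)---and then gets~\eqref{eq:formula-cond.exp} in one line: once \(E(\xi\delta_t)\in\Mult(I_{1,t})\), one has \(E(\xi\delta_t)=\stlim_i E(\xi\delta_t)u_{t,i}=\stlim_i\theta_{1,t}(\xi\cdot u_{t,i})\), so your separate left-multiplication check via \(E(\eta^*)=E(\eta)^*\) is unnecessary.
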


\begin{proof}
  By definition, \(E(\xi) = \Theta_{1,t}(\xi[I_{1,t}])\),
  where~\(\Theta_{1,t}\) is the extension (as in
  Lemma~\ref{lem:intersect_Hilm_tu_self-dual}) of the isomorphism
  \(\theta_{1,t}\colon \Hilm_t\cdot I_{1,t}\to \Hilm_1\cdot
  I_{1,t}=I_{1,t}\) described in Lemma~\ref{lem:intersect_Hilm_tu},
  and~\([I_{1,t}]\) is the support projection of the
  ideal~\(I_{1,t}\).  The ideal~\(\bar{I}_{1,t}\) in
  Lemma~\ref{lem:intersect_Hilm_tu} is the bidual or, equivalently,
  the weak closure in~\(\bid A\), of the ideal
  \(I_{1,t}\idealin A\).  By construction, \(\Theta_{1,t}\) is
  an isomorphism \(\bid{\Hilm_t}\cdot [I_{1,t}] \cong \bid{A}\cdot
  [I_{1,t}] \subseteq \bid{A}\).

  The isomorphism \(\theta_{1,t}\colon \Hilm_t\cdot I_{1,t}\to
  I_{1,t}\) induces an isomorphism of multiplier modules:
  \[
  \Mult(\theta_{1,t})\colon \Mult(\Hilm_t\cdot I_{1,t})
  \defeq \Bound(I_{1,t},\Hilm_t\cdot I_{1,t})
  \xrightarrow[\cong]{(\theta_{1,t})_*}
  \Bound(I_{1,t},I_{1,t}) = \Mult(I_{1,t}).
  \]
  There is a canonical map \(\Hilm_t\to \Mult(\Hilm_t\cdot
  I_{1,t})\), sending \(\xi\in \Hilm_t\) to the multiplier
  \(\hat\xi\in \Mult(\Hilm_t\cdot I_{1,t})\) given by
  \(\hat\xi(x)\defeq \xi\cdot x\) for \(x\in I_{1,t}\).  Thus
  \(\Mult(\theta_{1,t})(\hat{\xi})a = \theta_{1,t}(\xi a)\) for all
  \(a\in I_{1,t}\).  The map \(\Mult(\theta_{1,t})\) is the unique
  strictly continuous extension of~\(\theta_{1,t}\),
  and~\(\Theta_{1,t}\) is the unique weakly continuous extension
  of~\(\theta_{1,t}\).  The obvious embedding \(\Mult(I_{1,t})
  \hookrightarrow \bid{I_{1,t}} \hookrightarrow \bid{A}\) is
  continuous from the strict to the weak topology.
  Hence~\(\Theta_{1,t}\) extends \(\Mult(\theta_{1,t})\), so that
  \(E(\xi) = \Theta_{1,t}(\xi) = \Mult(\theta_{1,t})(\hat\xi)\in
  \Mult(I_{1,t})\) for all \(\xi\in\Hilm_t\).  This proves the first
  assertion of the lemma.  Formula~\eqref{eq:formula-cond.exp}
  follows because \(E(\xi\delta_t)\in\Mult(I_{1,t})\) implies
  \(E(\xi\delta_t) = \stlim_i E(\xi\delta_t)u_{t,i} = \stlim_i
  \theta_{1,t}(\xi\cdot u_{t,i})\).
\end{proof}

\begin{remark}
  \label{rem:graded}
  The weak conditional expectation \(E\colon A\rtimes_\red S\to \bid
  A\) is faithful and is the identity on~\(A\).  Hence the canonical
  maps \(\Hils_t\to A\rtimes_\red S\), \(\xi\mapsto \xi\delta_t\),
  are isometric (this is also proved in~\cite{Exel:noncomm.cartan}),
  and the same holds for~\(A\rtimes S\).
  These maps form representations of the action~\((\Hils_t)_{t\in
    S}\), and turn both \(A\rtimes_\red S\) and~\(A\rtimes S\) into
  \(S\)\nb-graded \(\Cst\)\nb-algebras with copies of~\(\Hilm_t\) as
  the subspaces of the grading.  The subspaces of a grading over an
  inverse semigroup that is not a group have non-trivial intersection
  and so are not linearly independent.
\end{remark}

\begin{notation}
  Let \(\tilde{A}\subseteq \bid{A}\)
  be the \(\Cst\)\nb-subalgebra
  generated by \(E(A\rtimes_\red S) \subseteq \bid A\).
  We have \(\tilde A\supseteq A\)
  because \(E|_A=\Id_A\).
  For a representation \(\pi\colon A\to\Bound(\Hils)\),
  let~\(\tilde{\pi}\)
  be the restriction of \(\bid{\pi}\colon \bid{A}\to\Bound(\Hils)\)
  to~\(\tilde{A}\).
\end{notation}

\begin{definition}
  \label{def:faithful_for_E}
  A family of representations~\((\pi_i)_{i\in I}\) of~\(A\)
  is \emph{\(E\)\nb-faithful} if the representation \(\bigoplus_{i\in I} \tilde{\pi}_i\)
  of~\(\tilde{A}\) is faithful.
\end{definition}

\begin{proposition}
  \label{pro:E-faithful}
  If the family of representations~\((\pi_i)_{i\in I}\)
  is \(E\)\nb-faithful,
  then the representation \(\bigoplus_{i\in I} \Ind \pi_i\)
  of~\(A\rtimes_\red S\) is faithful.
\end{proposition}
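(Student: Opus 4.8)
The plan is to exhibit a quotient map $A\rtimes_\red S\to \bigl(\bigoplus_i \Ind\pi_i\bigr)(A\rtimes_\red S)$ and show it is injective by tracking the weak conditional expectation through the induction construction. The key point is that the inner product on the induction bimodule $\ell^2(S,\bid A)$ is built from $E$, and that $E$ takes values in $\tilde A$ by Lemma~\ref{lem:description-E}, so a representation $\pi$ of $A$ that is faithful on $\tilde A$ should not lose any information when we tensor $\ell^2(S,\bid A)$ with $\Hils_\pi$. First I would set $\pi\defeq \bigoplus_{i\in I}\pi_i$, so that $\tilde\pi$ is faithful on~$\tilde A$ by hypothesis, and note that $\Ind\pi = \bigoplus_i \Ind\pi_i$, so it suffices to prove that $\Ind\pi$ is faithful on $A\rtimes_\red S$.

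Next I would use the concrete model: $A\rtimes_\red S$ acts faithfully by left multiplication $\lambda$ on $\bar\ell^2(S,\bid A)$ (this faithfulness is established in the paragraph following Proposition~\ref{pro:E_positive}), and $\Ind\pi$ is the representation on $\bar\ell^2(S,\bid A)\otimes_{\bid\pi}\Hils_\pi$. So I must show: if $x\in A\rtimes_\red S$ and $\lambda(x)\otimes 1 = 0$ on $\bar\ell^2(S,\bid A)\otimes_{\bid\pi}\Hils_\pi$, then $\lambda(x)=0$, equivalently $x=0$. Take any $\eta\in\bar\ell^2(S,\bid A)$; then $\lambda(x)\eta\in\bar\ell^2(S,\bid A)$ satisfies $\braket{\lambda(x)\eta}{\lambda(x)\eta}_{\bid A}\in\bid A$, and vanishing of $\lambda(x)\otimes 1$ gives $\bid\pi\bigl(\braket{\lambda(x)\eta}{\lambda(x)\eta}\bigr)=0$ for all $\eta$. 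The crucial step is to argue that these inner products $\braket{\lambda(x)\eta}{\lambda(x)\eta}$ actually lie in the subalgebra $\tilde A$ (not just in $\bid A$), at least for a set of $\eta$ large enough to detect $\lambda(x)$. For that I would restrict to $\eta$ in the dense $\Wst$-submodule spanned by $\delta_t$, $t\in S$; then $\lambda(x)\eta$ for $x = \sum_s \zeta_s\delta_s\in A\rtimes_\alg S$ is a finite sum of terms $\zeta_s\delta_s\cdot a\delta_t$, and $\braket{\zeta_s\delta_s\cdot a\delta_t}{\zeta_{s'}\delta_{s'}\cdot a'\delta_{t'}} = E\bigl((\zeta_s\delta_s a\delta_t)^* \zeta_{s'}\delta_{s'}a'\delta_{t'}\bigr)$, which by the computation in the proof of Proposition~\ref{pro:E_positive} (namely $E((\zeta\delta_t)^*\eta\delta_u)=\zeta^*\Theta_{t,u}(\eta)$) reduces to a finite sum of elements of the form $a^*\cdot E(\text{something})\cdot a'$, hence lies in $\tilde A$. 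Passing to the completion and using that $E$ is norm-contractive, the inner products $\braket{\lambda(x)\eta}{\lambda(x)\eta}$ for such $\eta$ and general $x\in A\rtimes_\red S$ lie in $\tilde A$ as well.

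Then faithfulness of $\tilde\pi$ on $\tilde A$ forces $\braket{\lambda(x)\eta}{\lambda(x)\eta}=0$, hence $\lambda(x)\eta=0$, for all $\eta$ in this dense submodule; since $\lambda(x)$ is a bounded (adjointable) operator and the submodule is weakly dense in the self-dual module $\bar\ell^2(S,\bid A)$, weak continuity of $\lambda(x)$ gives $\lambda(x)=0$, and therefore $x=0$ in $A\rtimes_\red S$. This shows $\Ind\pi$ is faithful. I expect the main obstacle to be the bookkeeping in the second paragraph: making precise that $\braket{\lambda(x)\eta}{\lambda(x)\eta}\in\tilde A$ for enough vectors $\eta$, which requires care because $\tilde A$ is only the $\Cst$-algebra \emph{generated} by $E(A\rtimes_\red S)$, so one must check this generated algebra is closed under the operations $a,a'\mapsto a^* E(y) a'$ that arise — but that is immediate since $\tilde A$ is a $\Cst$-algebra containing $A$ and $E(A\rtimes_\red S)$. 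A secondary subtlety is ensuring the reduction to $x\in A\rtimes_\alg S$ and then to general $x$ by density respects the strict/weak topologies on $\Mult$ involved in Lemma~\ref{lem:description-E}; one handles this by noting $\norm{E}\le 1$ so the map $x\mapsto \braket{\lambda(x)\eta}{\lambda(x)\eta}$ is norm-continuous into the $\Cst$-algebra $\tilde A$.
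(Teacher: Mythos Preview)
Your approach is correct but differs from the paper's. The paper argues structurally: since~$E$ is $\tilde A$-valued, one may form the Hilbert $\tilde A$-module $\ell^2(S,\tilde A)$ satisfying $\ell^2(S,\tilde A)\otimes_{\tilde A}\bid A\cong\ell^2(S,\bid A)$, embed $A\rtimes_\red S$ into $\Bound(\ell^2(S,\tilde A))$, and then invoke the general fact that tensoring a Hilbert module with a faithful representation of its coefficient algebra gives a faithful representation of the adjointable operators. Your argument unpacks this at the level of individual inner products $\braket{\lambda(x)\eta}{\lambda(x)\eta}=E(\eta^*x^*x\eta)$. The paper's route is shorter and makes the role of~$\tilde A$ as the natural coefficient algebra transparent; yours is more hands-on and avoids introducing the auxiliary module $\ell^2(S,\tilde A)$.

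Two points where your exposition can be tightened. First, the inner-product computation is more elaborate than needed: for $\eta,x\in A\rtimes_\red S$ one has $\eta^*x^*x\eta\in A\rtimes_\red S$, hence $E(\eta^*x^*x\eta)\in E(A\rtimes_\red S)\subseteq\tilde A$ directly; there is no need to expand into pieces of the form $a^*E(\cdot)a'$ or to invoke the formula from Proposition~\ref{pro:E_positive}. Second, your final density step is imprecise: the image of $A\rtimes_\alg S$ in $\bar\ell^2(S,\bid A)$ is not a $\bid A$-submodule, and its weak density there is not obvious, so the appeal to weak continuity of~$\lambda(x)$ needs more care. A cleaner endgame is: once $\lambda(x)\eta=0$ for all $\eta\in A\subseteq A\rtimes_\alg S$, take an approximate unit $(e_\alpha)$ for~$A$; then $xe_\alpha=0$ in $A\rtimes_\red S$ (the map $A\rtimes_\red S\to\ell^2(S,\bid A)$ is injective because~$E$ is faithful on $\bid A\barrtimes S\supseteq A\rtimes_\red S$), and $xe_\alpha\to x$ in norm since each~$\Hilm_t$ is a nondegenerate right $A$-module, giving $x=0$.
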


\begin{proof}
  We may replace the family~\((\pi_i)_{i\in I}\)
  by the single representation \(\pi =\bigoplus \pi_i\).
  We may use the \(\tilde{A}\)\nb-valued
  expectation on~\(A\rtimes S\)
  to construct a Hilbert \(\tilde{A}\)\nb-module
  \(\ell^2(S,\tilde{A})\).
  Then
  \(\ell^2(S,\tilde{A}) \otimes_{\tilde{A}} \bid{A} \cong
  \ell^2(S,\bid{A})\)
  as correspondences from~\(A\rtimes_\red S\)
  to~\(\bid{A}\).  The resulting map
  \[
  \Bound(\ell^2(S,\tilde{A})) \to \Bound(\ell^2(S,\bid{A})),\qquad
  T\mapsto T\otimes 1_{\bid{A}},
  \]
  is injective because the map \(\tilde{A} \to \bid{A}\)
  is injective.  Its image contains the image
  of~\(A\rtimes_\alg S\)
  and hence of~\(A\rtimes_\red S\).
  Hence we may as well define~\(A\rtimes_\red S\)
  as the \(\Cst\)\nb-subalgebra
  of \(\Bound(\ell^2(S,\tilde{A}))\)
  generated by \(A\rtimes_\alg S\).  Moreover,
  \[
  \ell^2(S,\bid{A}) \otimes_{\bid{A}} \bid{\pi}
  \cong \ell^2(S,\tilde{A}) \otimes_{\tilde{A}} \bid{A}
  \otimes_{\bid{A}} \bid{\pi}
  \cong \ell^2(S,\tilde{A}) \otimes_{\tilde{A}} \bid{\pi}|_{\tilde{A}}
  = \ell^2(S,\tilde{A}) \otimes_{\tilde{A}} \tilde{\pi}.
  \]
  If~\(\tilde{\pi}\)
  is faithful, then the induced representation of
  \(\Bound(\ell^2(S,\tilde{A}))\)
  on \(\ell^2(S,\tilde{A}) \otimes_{\tilde{A}} \tilde{\pi}\)
  is also faithful.  Hence~\(\Ind \pi\)
  is a faithful representation of~\(A\rtimes_\red S\).
\end{proof}

\begin{remark}
  \label{rem:induction_accidentally_faithful}
  It can easily happen that the induced representation~\(\Ind\pi\)
  is faithful although~\(\pi\)
  is not faithful, even without the difficulty of extending from~\(A\)
  to~\(\tilde{A}\).
  Consider a finite group~\(\Gamma\)
  with \(n>1\)
  elements and let it act on \(A=\Cont(\Gamma)\).
  Then \(A\rtimes\Gamma\cong \Mat_n\C\)
  has only faithful nonzero representations.  If \(\pi\colon A\to\C\)
  is a character, then~\(\pi\)
  is not faithful, but \(\Ind\pi\)
  is an irreducible faithful representation of~\(A\rtimes\Gamma\).
\end{remark}

To turn Proposition~\ref{pro:E-faithful} into a useful criterion, we
need to understand~\(\tilde{A}\)
better.  We are particularly interested in when \(\tilde{A}=A\), that is,
when \(E(A\rtimes S)\subseteq A\) so that~\(E\) becomes an
ordinary conditional expectation \(A\rtimes S\to A\).

\section{The commutative case}
\label{sec:commutative_E}

First we consider the special case where~\(A\) is commutative.  Our
constructions are a minor generalisation of those by Khoshkam and
Skandalis in~\cite{Khoshkam-Skandalis:Regular}; the only difference
is that we also allow groupoid \(\Cst\)\nb-algebras twisted by Fell
line bundles.  Example~2.5 in~\cite{Khoshkam-Skandalis:Regular}
shows that \(\Ind\pi\) need not be faithful if~\(\pi\) is a faithful
representation of~\(A\).  So our problem is non-trivial.

Let \(A=\Cont_0(X)\)
be a commutative \(\Cst\)\nb-algebra.  Let~\(\Hilm\)
be a Hilbert \(A\)\nb-bimodule.
The ideals \(\s(\Hilm)\)
and \(\rg(\Hilm)\)
correspond to open subsets \(U\)
and~\(V\)
in~\(X\),
respectively, and~\(\Hilm\)
is an imprimitivity bimodule between \(\Cont_0(U)\)
and~\(\Cont_0(V)\).
A Hilbert module over~\(\Cont_0(V)\)
is equivalent to a continuous field of Hilbert spaces over~\(V\).
Since we want the compact operators on this field of Hilbert spaces to
be isomorphic to~\(\Cont_0(U)\),
hence commutative, this continuous field must be a complex line
bundle; equivalently, each fibre has dimension~\(1\).
The left action must map \(f\in\Cont_0(U)\)
to the operator that multiplies pointwise with the function
\(f\circ\alpha^{-1}\)
for some homeomorphism \(\alpha\colon U\to V\).
The Hilbert bimodule~\(\Hilm\)
and the data \((U,V,L,\alpha)\)
determine each other uniquely up to isomorphism.  The triple
\((U,V,\alpha)\)
is a partial homeomorphism of~\(X\).
Thus a Hilbert bimodule over~\(\Cont_0(X)\)
is equivalent to a partial homeomorphism of~\(X\)
together with a line bundle on its (co)domain.

Now we generalise the above discussion and consider an action of a
unital inverse semigroup~\(S\) on~\(A=\Cont_0(X)\).  This is
equivalent to a saturated Fell bundle over~\(S\) with unit
fibre~\(A\).
Following~\cite{BussExel:Fell.Bundle.and.Twisted.Groupoids}, we now
describe these in terms of twisted étale groupoids, that is, Fell
line bundles over étale groupoids.  An action of~\(S\) on~\(A\) is
equivalent to partial homeomorphisms \(\psi_t\colon D_{t^*}\to D_t\)
for certain open subsets \(D_t\subseteq X\) with line
bundles~\(L_t\) over~\(D_t\) for all \(t\in S\), together with
suitable multiplication isomorphisms for \(t,u\in S\).  These
multiplication isomorphisms can only exist if the partial
homeomorphisms~\((\psi_t)_{t\in S}\) form an inverse semigroup
action on~\(X\), that is, \(\psi_t\circ \psi_u = \psi_{t u}\) for
all \(t,u\in S\): this is the action on the primitive ideal space
of~\(A\) induced by an action on~\(A\) given by
\cite{Buss-Meyer:Actions_groupoids}*{Lemma 6.12}.  Hence we may form
the transformation groupoid \(X\rtimes S\), which is an \'etale,
possibly non-Hausdorff, groupoid with object space~\(X\).

The complex line bundles~\(L_t\) with their multiplication maps are
equivalent to a Fell line bundle~\(\mathcal{L}\) over the
groupoid~\(X\rtimes S\), that is, a Fell bundle over~\(X\rtimes S\)
with only \(1\)\nb-dimensional fibres.  This follows from
\cite{Buss-Meyer:Actions_groupoids}*{Theorem 6.13}, which shows that
the action of~\(S\) on~\(A\) comes from a Fell bundle over the
groupoid~\(X\rtimes S\) with unit fibre~\(A\).  For every Hausdorff
open subset \(U\subseteq (X\rtimes S)^1\), the
\(\Cont_0\)\nb-sections of the Fell bundle over~\(U\) form a
\(\Cont_0(U)\)-linear imprimitivity bimodule between \(\Cont_0(U)\)
and itself.  As above, this must be the space of sections of a line
bundle over~\(U\).  Since these Hausdorff open subsets
cover~\((X\rtimes S)^1\), we get a line bundle~\(\mathcal{L}\) over
all of~\((X\rtimes S)^1\).  The multiplication on quasi-continuous
sections of the Fell bundle induces the appropriate multiplication
between the fibres of~\(\mathcal{L}\).

An element of~\(\Hilm_t\) is a \(\Cont_0\)\nb-section of the line
bundle~\(L_t\) over~\(D_t\).  We may identify~\(\Hilm_t\) with the
space of all continuous \(\Cont_0\)\nb-sections of~\(\mathcal{L}\) on
the bisection~\((X\rtimes S)_t\) of the arrow space of~\(X\rtimes S\)
corresponding to \(t\in S\).

A section of the line bundle~\(\mathcal{L}\) over~\((X\rtimes S)^1\)
is called \emph{quasi-continuous} if it is a finite linear
combination of \(\Cont_0\)\nb-sections on Hausdorff, open
subsets~\(U\) of~\(X\rtimes S\), extended by~\(0\) outside~\(U\).
Let \(\Sect(X\rtimes S,\mathcal{L})\) be the space of
quasi-continuous sections.  The section \(\Cst\)\nb-algebra of the
Fell line bundle~\(\mathcal{L}\) is defined as the completion of
\(\Sect(X\rtimes S,\mathcal{L})\) in the maximal
\(\Cst\)\nb-seminorm.  \(\Cst\)\nb-algebras of this type may be
viewed as \emph{twisted} groupoid \(\Cst\)\nb-algebras (see
\cite{Renault:Cartan.Subalgebras}); they are the groupoid analogues
of \emph{twisted} group \(\Cst\)\nb-algebras.

Elements of~\(\Hilm_t\) extended by~\(0\) outside~\((X\rtimes S)_t\)
give elements of \(\Sect(X\rtimes S,\mathcal{L})\).  The
intersection of \((X\rtimes S)_t\) and \((X\rtimes S)_u\) for
\(t,u\in S\) corresponds to the open subset \(D_{t, u} \defeq
\bigcup_{v\le t,u} D_v\), and \(\Cont_0(D_{t,u})\) is the
ideal~\(I_{t,u}\) in~\(\Cont_0(X)\).  Hence the maps \(\Hilm_t \to
\Sect(X\rtimes S,\mathcal{L})\) defined above induce a
\Star{}isomorphism \(A\rtimes_\alg S \to \Sect(X\rtimes
S,\mathcal{L})\) by \cite{Buss-Meyer:Actions_groupoids}*{Proposition
  B.2}.  Hence \(A\rtimes S = \Cst(X\rtimes S,\mathcal{L})\) (see
also \cite{Buss-Meyer:Actions_groupoids}*{Corollary~5.6} and
\cite{BussExel:Fell.Bundle.and.Twisted.Groupoids}*{Proposition~2.14}).

The bidual~\(\bid{A}\) of~\(A\) contains the \(\Cst\)\nb-algebra
\(\Borel(X)\) of bounded Borel functions on~\(X\) because any
representation of~\(\Cont_0(X)\) extends uniquely to a
representation of~\(\Borel(X)\).  Any ideal in~\(\Cont_0(X)\) is of
the form~\(\Cont_0(U)\) for an open subset \(U\subseteq X\).  Its
multiplier algebra is \(\Contb(U)\), and any function
in~\(\Contb(U)\), extended by~\(0\) outside~\(U\), is a Borel
function on~\(X\).  Now Lemma~\ref{lem:description-E} shows that the
image of the weak conditional expectation \(E\colon
\Cont_0(X)\rtimes S \to \bid{\Cont_0(X)}\) is contained
in~\(\Borel(X)\).  So we may as well work in the more concrete
subalgebra \(\Borel(X) \subseteq \bid{\Cont_0(X)}\).

When we identify \(A\rtimes_\alg S\cong \Sect(X\rtimes
S,\mathcal{L})\), then the map \(E\colon A\rtimes_\alg S\to
\bid{A}\) simply restricts sections of~\(\mathcal{L}\) to the unit
fibre \(X\subseteq (X\rtimes S)^1\); this gives scalar-valued Borel
functions on~\(X\) through a canonical isomorphism
\(\mathcal{L}|_X\cong \C\times X\) for any Fell line bundle.  We can
now describe~\(\tilde{A}\) and decide when~\(E\) takes values
in~\(A\), that is, when \(A=\tilde{A}\):

\begin{proposition}
  \label{pro:tilde_A_commutative}
  Let \(A=\Cont_0(X)\) equipped with an action of a unital inverse
  semigroup~\(S\).  The \(\Cst\)\nb-algebra~\(\tilde{A}\) is the
  \(\Cst\)\nb-subalgebra of \(\Borel(X)\) that is generated by
  functions of the form~\(f|_{X\cap U}\), extended by zero on
  \(X\setminus (X\cap U)\), for Hausdorff, open subsets \(U\subseteq
  (X\rtimes S)^1\) and \(f\in \Contc(U)\).

  The conditional expectation \(E\colon A\rtimes S\to \bid{A}\)
  takes values in~\(A\)
  if and only if the associated transformation groupoid~\(X\rtimes S\)
  is Hausdorff.
\end{proposition}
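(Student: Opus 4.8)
Both statements follow once we understand $E$ on the dense subalgebra $A\rtimes_\alg S\cong\Sect(X\rtimes S,\mathcal L)$, on which, as recalled above, $E$ is simply restriction of a section to the unit fibre $X\subseteq(X\rtimes S)^1$. A general element of $\Sect(X\rtimes S,\mathcal L)$ is a finite sum $\sum_j f_j$, where each $f_j$ is a section of $\mathcal L$ on a Hausdorff open $U_j\subseteq(X\rtimes S)^1$ (we may take $f_j\in\Contc(U_j,\mathcal L)$, since these are dense among the $\Cont_0$\nb-sections and restriction to $X\cap U_j$ is contractive), extended by zero outside $U_j$. Using the canonical trivialisation $\mathcal L|_X\cong\C\times X$, the image under $E$ is $\sum_j f_j|_{X\cap U_j}$, extended by zero on $X\setminus U_j$; so $E(A\rtimes_\alg S)$ is exactly the linear span of the functions named in the proposition. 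Since $A\rtimes_\alg S$ is dense in $A\rtimes_\red S$ and $E$ is contractive, $E(A\rtimes_\alg S)\subseteq E(A\rtimes_\red S)\subseteq\overline{E(A\rtimes_\alg S)}$, so $\tilde A$ — the $\Cst$\nb-algebra generated by $E(A\rtimes_\red S)$ — is the $\Cst$\nb-algebra generated by that span, which proves the description of $\tilde A$. For the same reason, $E$ has image in $A$ if and only if $E(A\rtimes_\alg S)\subseteq A$ (using Proposition~\ref{pro:embed_alg_red} to regard $A\rtimes_\alg S$ inside both $A\rtimes S$ and $A\rtimes_\red S$, and that $A$ is closed). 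So it remains to show that $E(A\rtimes_\alg S)\subseteq A$ holds precisely when $X\rtimes S$ is Hausdorff.

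If $X\rtimes S$ is Hausdorff, then $X=(X\rtimes S)^0$ is closed in $(X\rtimes S)^1$: it is the equaliser of the identity and the source map, and a continuous map into a Hausdorff space has closed equaliser with the identity. For $f\in\Contc(U,\mathcal L)$ with $U$ Hausdorff open, extending $f$ by zero gives a continuous, compactly supported section $\eta$ of $\mathcal L$ on all of $X\rtimes S$ (continuity at the boundary holds because the support is compact, hence closed). Its restriction $\eta|_X$ is continuous and supported in the compact set $\mathrm{supp}(\eta)\cap X$, so $f|_{X\cap U}$ extended by zero lies in $\Contc(X)\subseteq A$. Hence $E(A\rtimes_\alg S)\subseteq A$.

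For the converse I would argue by contraposition. Assume $X\rtimes S$ is not Hausdorff, so there are distinct arrows $p\neq q$ that cannot be separated by open sets. Since the source and range maps are continuous and $X$ is Hausdorff, inseparable arrows have the same source and the same range; translating $p$ and $q$ on the left by the open bisection through $p^{-1}$ — a homeomorphism between two open subsets of $X\rtimes S$ — reduces the problem to the case that $p=x$ is a unit and $q=g$ lies in the isotropy group over $x$ with $g\neq x$. Choose $w\in S$ with $g$ in the open bisection $U\defeq(X\rtimes S)_w$; since $U$ is a bisection containing $g$, no arrow of $U$ other than $g$ has source $x$, so the unit $x$ does not lie in $U$, and the units lying in $U$ form the open subset $U\cap X=D_{1,w}\subseteq X$ (as in Section~\ref{sec:commutative_E}), which does not contain $x$. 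Inseparability of $x$ and $g$ produces a net $(y_i)$ in $X\rtimes S$ with $y_i\to x$ and $y_i\to g$, hence eventually $y_i\in X\cap U=D_{1,w}$, with $y_i\to x$ in $X$ and $y_i\to g$ in $U$. Now pick $f\in\Contc(U,\mathcal L)$ with $\norm{f(g)}=1$ and set $\eta\defeq f$ extended by zero, an element of $\Sect(X\rtimes S,\mathcal L)\cong A\rtimes_\alg S$. Then $E(\eta)=\eta|_X$ vanishes at $x$ because $x\notin U$, whereas $\norm{E(\eta)(y_i)}=\norm{f(y_i)}\to\norm{f(g)}=1$ by continuity of $f$ on $U$. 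Thus $E(\eta)$ is discontinuous at $x$, so $E(\eta)\notin\Cont_0(X)=A$ and $E(A\rtimes_\alg S)\not\subseteq A$. The main point — and the one place where genuine care is needed — is this reduction of an arbitrary pair of inseparable arrows to the special form ``a unit $x$ together with a non-trivial isotropy element over it''; once that is in place, the conclusion is just unwinding the definition of $E$ and keeping track of which units sit inside the bisection $U$.
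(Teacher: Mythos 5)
Your argument is correct and follows essentially the same route as the paper: identify \(E\) on \(\Sect(X\rtimes S,\mathcal{L})\cong A\rtimes_\alg S\) with restriction of sections to the unit fibre, and reduce the second statement to whether the unit space \(X\) is closed in \((X\rtimes S)^1\). The only divergence is that where the paper simply invokes Lemma~\ref{lem:G_Hausdorff_iff_G0_Closed}, you re-prove the needed direction by hand --- translating an inseparable pair by a bisection to a unit together with a nontrivial isotropy element and exhibiting an explicitly discontinuous restricted section --- which fills in details the paper leaves implicit but rests on the same underlying idea.
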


\begin{proof}
  The first statement about~\(\tilde{A}\)
  is clear from our description of the conditional expectation~\(E\).
  We have \(A=\tilde{A}\)
  if and only if all functions of the form~\(f|_{X\cap U}\)
  for Hausdorff, open subsets \(U\subseteq (X\rtimes S)^1\)
  and \(f\in \Contc(U)\)
  are still continuous on~\(X\).
  This is the case if and only if~\(X\)
  is a closed subset of~\((X\rtimes S)^1\).
  This is equivalent to~\((X\rtimes S)^1\)
  being Hausdorff, see Lemma~\ref{lem:G_Hausdorff_iff_G0_Closed}.
\end{proof}

\begin{lemma}
  \label{lem:G_Hausdorff_iff_G0_Closed}
  A topological groupoid~\(G\)
  is Hausdorff if and only if its unit space~\(G^0\)
  is Hausdorff and closed as a subset of~\(G^1\).
\end{lemma}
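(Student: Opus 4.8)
The forward direction is immediate: if $G^1$ is Hausdorff then every subset, in particular $G^0$, is Hausdorff, and $G^0 = \{g \in G^1 : g = r(g)\} = \{g : g = s(g)\}$ is the equaliser of the two continuous maps $r, s \colon G^1 \to G^1$ (or of $\mathrm{id}$ and $s$, say), hence closed because the diagonal of a Hausdorff space is closed. So the real content is the converse. The plan is to assume $G^0$ is Hausdorff and closed in $G^1$ and to show the diagonal $\Delta \subseteq G^1 \times G^1$ is closed, equivalently that any two distinct arrows $g_1 \neq g_2$ can be separated by disjoint open sets.

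First I would reduce to the case where $g_1$ and $g_2$ are composable in the sense that they have the same source and range. If $s(g_1) \neq s(g_2)$, then since $G^0$ is Hausdorff we can separate $s(g_1)$ and $s(g_2)$ by disjoint open sets $U_1, U_2 \subseteq G^0$, and then $s^{-1}(U_1)$ and $s^{-1}(U_2)$ are disjoint open neighbourhoods of $g_1$ and $g_2$ in $G^1$; similarly if $r(g_1) \neq r(g_2)$ using $r$. So we may assume $r(g_1) = r(g_2)$ and $s(g_1) = s(g_2)$, hence that $h \defeq g_1^{-1} g_2$ is defined, and $h \neq s(g_1) = s(g_2)$, i.e.\ $h \notin G^0$ (if $h$ were a unit it would equal $s(h) = s(g_2)$, forcing $g_1 = g_2$). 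Since $G^0$ is closed in $G^1$, its complement is open, so there is an open set $W \ni h$ with $W \cap G^0 = \emptyset$. Now I would pull this back: the map $G^1 \times_{r,r} G^1 \to G^1$, $(a,b) \mapsto a^{-1}b$, is continuous, so there are open neighbourhoods $V_1 \ni g_1$, $V_2 \ni g_2$ in $G^1$ with $V_1^{-1} V_2 \subseteq W$ (meaning: for all composable $a \in V_1$, $b \in V_2$, one has $a^{-1}b \in W$). Shrinking, we may also take $V_1, V_2$ small enough that $s(V_1)$, $s(V_2)$ both lie in a common open set and similarly for $r$; but actually the key point is the separation argument below.

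The heart of the argument is then to show $V_1$ and $V_2$ can be made disjoint. Suppose $a \in V_1 \cap V_2$. Then $a$ is composable with itself in the relevant sense ($r(a) = r(a)$), so $a^{-1} a \in W$; but $a^{-1} a = s(a) \in G^0$, contradicting $W \cap G^0 = \emptyset$. Hence $V_1 \cap V_2 = \emptyset$, and we have separated $g_1$ from $g_2$. I expect the only mildly delicate point to be setting up the continuity of $(a,b)\mapsto a^{-1}b$ on the fibre product $G^1 \times_{r,r} G^1$ and extracting the product-open neighbourhood $V_1 \times V_2$ inside the preimage of $W$: this is routine from the axioms of a topological groupoid (inversion and multiplication are continuous, the fibre product carries the subspace topology from $G^1\times G^1$), but one should be careful that the neighbourhoods $V_i$ obtained in $G^1 \times_{r,r} G^1$ come from genuine open sets of $G^1$, which they do since the fibre product has the subspace topology. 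No step here should present a genuine obstacle; it is essentially the standard fact that a topological group is Hausdorff iff $\{e\}$ is closed, transported to the groupoid setting via the source/range fibration.
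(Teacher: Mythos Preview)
Your proof is correct and rests on the same key idea as the paper's: translate the question about separating two arrows to the unit space via the map \((a,b)\mapsto a^{-1}b\). The paper argues the converse by contrapositive with nets---if \(G^1\) is not Hausdorff, a net with two limits \(g,h\) gives a net \(g_i^{-1}g_i\) in~\(G^0\) converging both to \(s(g)\in G^0\) and to \(g^{-1}h\), so either \(G^0\) fails to be Hausdorff or fails to be closed---whereas you argue directly by producing separating open sets; the two arguments are dual and equally valid.
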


\begin{proof}
  If~\(G^1\)
  is Hausdorff, so is the subspace~\(G^0\).
  Since \(G^0 = \{g \in G^1 \mid g = 1_{\s(g)}\}\)
  and the map \(g\mapsto 1_{\s(g)}\)
  on~\(G^1\)
  is continuous, the units~\(G^0\)
  form a closed subset of~\(G^1\) if~\(G^1\) is Hausdorff.

  Conversely, assume that~\(G^1\) is not Hausdorff.  Then there is a
  net \((g_i)_{i\in I}\) in~\(G^1\) with two different limit points
  \(g,h\in G^1\).  Since the range, inversion and multiplication
  maps are continuous, \(\rg(g)=\lim_i\rg(g_i)=\rg(h)\) and the net
  \(g_i^{-1}\cdot g_i\), which lies in~\(G^0\), converges both to
  \(g^{-1}\cdot g \in G^0\) and to \(g^{-1}\cdot h \neq g^{-1}\cdot
  g\).  If \(g^{-1}\cdot h\in G^0\), then~\(G^0\) is not Hausdorff.
  If \(g^{-1}\cdot h\notin G^0\), then~\(G^0\) is not closed
  in~\(G^1\).
\end{proof}

The same \(\Cst\)\nb-algebra~\(\tilde{A}\)
is used in~\cite{Khoshkam-Skandalis:Regular} to define the regular
representation of \(\Cont_0(X)\rtimes S = \Cst(G)\)
on a certain Hilbert \(\tilde{A}\)\nb-module.
This coincides with our regular representation.  Since~\(\tilde{A}\)
is commutative, it is isomorphic to~\(\Cont_0(Y)\)
for a certain space~\(Y\).
The inclusion map \(A\to\tilde{A}\)
gives a continuous map \(Y\to X\).
Since Borel functions are, in particular, functions on~\(X\),
we also get a map \(X\to Y\)
using evaluation homomorphisms; but this map need not be continuous
(see~\cite{Khoshkam-Skandalis:Regular}).

Our description of~\(\tilde{A}\) allows us to characterise when a
representation~\(\pi\) of~\(\Cont_0(X)\) is \(E\)\nb-faithful: this
means that the resulting representation of~\(\Cont_0(Y)\) is
faithful, that is, its ``support'' is dense in~\(Y\).  This
criterion is already obtained
in~\cite{Khoshkam-Skandalis:Regular}*{Corollary~2.11}.
Proposition~\ref{pro:faithful_rep_abstract} below is a
noncommutative analogue of this result.

\section{Conditional expectation in the noncommutative case}
\label{sec:noncommutative_E}

Now let~\(A\) be an arbitrary
\(\Cst\)\nb-algebra, equipped with an action of a unital inverse
semigroup~\(S\) by Hilbert bimodules.  We are going to characterise
when the conditional expectation~\(E\) is \(A\)\nb-valued, that is,
\(A=\tilde{A}\).  First we show by an example that~\(\tilde{A}\) may
become so complicated that a complete description is not a promising
goal.  It is easier to describe when \(A=\tilde{A}\).

\begin{example}
  \label{exa:action_compact_plus_unitary}
  Let~\(G\)
  be a group and let~\(S\)
  be the inverse semigroup obtained by adding a zero element to~\(G\).
  Let~\((u_g)_{g\in G}\)
  be a group representation of~\(G\)
  on a Hilbert space~\(\Hils\).
  Actually, it is enough to have a group homomorphism to the unitary
  group in the Calkin algebra \(\Bound(\Hils)/\Comp(\Hils)\),
  but already ordinary representations lead to rather complicated
  situations.  Let \(A=\Comp(\Hils)^+\),
  the unitisation of the \(\Cst\)\nb-algebra
  of compact operators.  The action of~\(S\)
  on~\(A\)
  is defined by taking \(\Hilm_0 \defeq \Comp(\Hils)\)
  and
  \(\Hilm_g \defeq \Comp(\Hils)\oplus \C\cdot u_g \subseteq
  \Bound(\Hils)\)
  for \(g\in G\).
  We clearly have \(\Hilm_g \Hilm_h = \Hilm_{gh}\)
  for \(g,h\in G\),
  and \(\Hilm_0 \Hilm_g = \Hilm_0 = \Hilm_g \Hilm_0\)
  and \(\Hilm_g^* = \Hilm_{g^{-1}}\)
  for all \(g\in G\)
  as well.  Hence we have got an action of~\(S\)
  on~\(A\) by Hilbert bimodules.

  The bidual~\(\bid{A}\)
  is naturally isomorphic to \(\bid{A}\cong \Bound(\Hils) \oplus \C\)
  because \(\Bound(\Hils) = \bid{\Comp(\Hils)}\),
  and \(1\in A\)
  is mapped to \((1,1)\).
  For \(t,u\in S\)
  with \(t\neq u\),
  there is always a unique element \(v\le t,u\),
  namely, \(v=0\).
  Hence \(I_{t,u}=\Comp(\Hils)\)
  for all \(t,u\in S\)
  with \(t\neq u\).
  Thus \(\bar{I}_{t,u} = \Bound(\Hils)\oplus 0\),
  and the projection onto this ideal is
  \((1,0)\in \Bound(\Hils)\oplus\C\).
  Since \(E\colon \bid{\Hilm_g} \to \bid{\Hilm_1}\)
  is weakly continuous, it must map
  \(k+\lambda u_g\mapsto (k+\lambda u_g,0)\in \Bound(\Hils)\oplus\C\)
  for all \(k\in\Comp(\Hilm)\),
  \(\lambda\in\C\),
  and \(g\neq 1\).
  Thus~\(\tilde{A}\)
  is the \(\Cst\)\nb-subalgebra
  of \(\Bound(\Hils)\oplus\C\)
  generated by \(\Comp(\Hilm)\)
  and the unitaries \(u_g\)
  for \(g\in G\setminus\{1\}\)
  in the first summand and by~\((1,1)\).  This gives
  \[
  \tilde{A} = (\Comp(\Hils) + \Cst(u_g\mid g\in G)) \oplus \C.
  \]
  Since any \(\Cst\)\nb-algebra is generated by the unitaries it
  contains, we may get any \(\Cst\)\nb-algebra
  containing~\(\Comp(\Hils)\) in the first summand.
\end{example}

Let~\(\Prim(A)\)
be the primitive ideal space of~\(A\).
The lattice of ideals in~\(A\) is isomorphic to the lattice of open
subsets of~\(\Prim(A)\) by
\cite{Dixmier:Cstar-algebras}*{Proposition 3.2.2}.
The action of~\(S\)
on~\(A\)
by Hilbert bimodules induces an action on~\(\Prim(A)\)
by partial homeomorphisms by
\cite{Buss-Meyer:Actions_groupoids}*{Lemma 6.12}.  Let
\(\alpha_t\colon D_{t^*} \to D_t\)
for \(t\in S\)
be the partial homeomorphism of~\(\Prim(A)\)
associated to \(t\in S\).
The open subsets \(D_{t^*}\)
and~\(D_t\)
of~\(\Prim(A)\)
correspond to the ideals \(\s(\Hilm_t)\)
and~\(\rg(\Hilm_t)\) in~\(A\).

\begin{lemma}
  \label{lem:I_tu_explicit}
  The ideal~\(I_{t,u}\)
  for \(t,u\in S\)
  corresponds to the open subset \(\bigcup_{v\le t,u} D_v\)
  in~\(\Prim(A)\).
  A representation \(\pi\colon A\to\Bound(\Hils)\)
  maps the central projection \([I_{t,u}]\in\bid{A}\)
  to the orthogonal projection onto the subspace
  \(\pi(I_{t,u}) \cdot\Hils\).
\end{lemma}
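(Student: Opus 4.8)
The plan is to prove the two assertions separately. The first is a routine consequence of the standard correspondence between ideals and open subsets; the second is a soft fact about normal extensions of representations, and it does not use the \(\Prim(A)\)\nb-picture at all.

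For the first assertion I would invoke \cite{Dixmier:Cstar-algebras}*{Proposition~3.2.2}: the lattice of closed two-sided ideals of~\(A\) is isomorphic to the lattice of open subsets of~\(\Prim(A)\), and this isomorphism carries the ideal generated by a family of ideals (the closure of their sum) to the union of the corresponding open subsets. By definition, \(I_{t,u}\) is the ideal generated by the ideals \(\s(\Hilm_v)=\s(\Hilm_{v^*v})\) for \(v\le t,u\), and by the construction of the induced \(S\)\nb-action on~\(\Prim(A)\) in \cite{Buss-Meyer:Actions_groupoids}*{Lemma~6.12} the ideal \(\s(\Hilm_v)\) corresponds to the open subset~\(D_v\). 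Hence \(I_{t,u}\) corresponds to \(\bigcup_{v\le t,u}D_v\). The only point that needs care here is the bookkeeping with the conventions for the various~\(D_v\) (and, implicitly, that as \(v\) runs over \(\{v:v\le t,u\}\) the idempotents \(v^*v\) run over exactly the ones entering the definition of \(I_{t,u}\)); no genuine difficulty arises.

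For the second assertion, write \(I\defeq I_{t,u}\) and let \(\bid{\pi}\colon\bid{A}\to\Bound(\Hils)\) be the unique normal extension of~\(\pi\); this is the map meant by ``\(\pi\)'' in the statement. Recall that \([I]\in\bid{A}\) is, by definition, the unit of the weakly closed two-sided ideal \(\bid{I}=[I]\cdot\bid{A}\), where \(\bid{I}\) denotes the weak closure of~\(I\) in~\(\bid{A}\). Since~\(\bid{\pi}\) is normal, \(\bid{\pi}(\bid{I})\) is weakly closed; as it contains \(\bid{\pi}(I)=\pi(I)\), it equals the von Neumann algebra \(\pi(I)''\) generated by~\(\pi(I)\), whose unit is the orthogonal projection~\(P\) onto \(\overline{\pi(I)\Hils}\). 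A \Star{}homomorphism sends the unit of its domain to the unit of its image, so \(\bid{\pi}([I])=P\). Finally, \(\pi(I)\Hils\) is already closed, and hence equals \(\overline{\pi(I)\Hils}\), by the Cohen--Hewitt factorisation theorem applied to the nondegenerate Banach \(I\)\nb-module~\(P\Hils\); thus \(\bid{\pi}([I_{t,u}])\) is the orthogonal projection onto \(\pi(I_{t,u})\Hils\), as claimed. I do not expect a serious obstacle anywhere; the single point most worth spelling out in the write-up is the identification of \([I]\) with the unit of~\(\bid{I}\), together with the fact that the normal \Star{}homomorphism~\(\bid{\pi}\) sends this unit to the unit of \(\pi(I)''\), i.e.\ to the range projection~\(P\).
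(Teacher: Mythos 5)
Your proof is correct and essentially the paper's own: the first claim is exactly the lattice-isomorphism argument (ideal generated by the \(\s(\Hilm_v)\) \(\leftrightarrow\) union of the corresponding open sets), and for the second the paper just takes an approximate unit \((e_\alpha)\) of \(I_{t,u}\), whose weak limit in \(\bid{A}\) is \([I_{t,u}]\) and whose image \(\pi(e_\alpha)\) converges strongly to the projection onto \(\pi(I_{t,u})\Hils\) --- an equivalent routine route to your identification of \([I_{t,u}]\) as the unit of \(\bid{I_{t,u}}\), pushed through the normal extension to the unit of the weak closure of \(\pi(I_{t,u})\). One cosmetic point only: when \(\pi|_{I_{t,u}}\) is degenerate that weak closure is not literally the double commutant \(\pi(I_{t,u})''\) (which contains \(1_{\Hils}\)), but its unit is indeed the range projection onto \(\pi(I_{t,u})\Hils\), so your argument is unaffected.
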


\begin{proof}
  Since the bijection between ideals in~\(A\) and open subsets
  in~\(\Prim(A)\)
  is a lattice isomorphism, the ideal generated by~\(\s(\Hilm_v)\)
  for \(v\le t,u\)
  corresponds to the union
  \(\bigcup_{v\le t,u} D_v \subseteq \Prim(A)\).
  The second statement holds because \([I]\in\bid{A}\)
  for an ideal \(I\idealin A\)
  is the weak limit of an approximate unit~\((e_\alpha)\)
  for~\(I\)
  and~\(\pi(e_\alpha)\)
  converges strongly to the orthogonal projection onto
  \(\pi(I)\cdot\Hils\).
\end{proof}

\begin{proposition}
  \label{pro:A_tilde_A}
  We have \(A=\tilde{A}\)
  if and only if the ideal~\(I_{1,t}\)
  is complemented in the larger ideal~\(\s(\Hilm_t)\)
  for each \(t\in S\).
\end{proposition}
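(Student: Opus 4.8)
The plan is to analyze the generating set of $\tilde A$ obtained from Lemma~\ref{lem:description-E}: for each $t\in S$ and $\xi\in\Hils_t$, the element $E(\xi\delta_t)$ lies in $\Mult(I_{1,t})\subseteq \bid A$, and these multipliers (together with $A$ itself) generate $\tilde A$. So I first observe that $\tilde A\subseteq A$ holds if and only if $\Mult(I_{1,t})$, viewed inside $\bid A$ via the embedding $\Mult(I_{1,t})\hookrightarrow \bid{I_{1,t}}\hookrightarrow \bid A$, is already contained in $A$ for every $t$ --- or more precisely, if the image of the relevant multipliers $E(\xi\delta_t)$ lands in $A$. The key point is that $E(\xi\delta_t)$ ranges over enough of $\Mult(I_{1,t})$: since $\theta_{1,t}\colon \Hils_t\cdot I_{1,t}\congto I_{1,t}$ is an imprimitivity-bimodule isomorphism and $E(\xi\delta_t)x = \theta_{1,t}(\xi\cdot x)$, as $\xi$ ranges over $\Hils_t$ the multipliers $E(\xi\delta_t)$ generate (as a $\Cst$-algebra together with $I_{1,t}$ itself, acting on the left) a $\Cst$-subalgebra of $\Mult(I_{1,t})$ that contains the image of $[I_{1,t}]$, the support projection of $I_{1,t}$. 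Indeed $E(\xi\delta_t)^* E(\eta\delta_t) = \langle\theta_{1,t}(\xi\cdot\blank)\mid\theta_{1,t}(\eta\cdot\blank)\rangle \in A$ lies in $\s(\Hils_t)\cdot I_{1,t}$-type ideals, while $E(\xi\delta_t) E(\eta\delta_t)^*$ produces the left inner products, whose closed span is exactly $I_{1,t}$ acting as the support projection $[I_{1,t}]$ of the imprimitivity bimodule $\Hils_t\cdot I_{1,t}$ over $I_{1,t}$.

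Next I reduce the containment $\tilde A\subseteq A$ to a statement purely about the projection $[I_{1,t}]$. The element $E(\xi\delta_t) E(\eta\delta_t)^*$, for $\xi,\eta$ running through $\Hils_t$, has closed linear span equal to the ideal of $\Mult(I_{1,t})$ generated by $\theta_{1,t}(\Hils_t\cdot I_{1,t}\cdot \Hils_t^*)$, which is all of $I_{1,t}$; but taking strict limits (using the approximate-unit formula \eqref{eq:formula-cond.exp}) one sees that $\tilde A$ contains the strict closure, hence in particular the support projection $[I_{1,t}]$ of $I_{1,t}$ in $\bid{I_{1,t}}\subseteq\bid A$. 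Conversely, once $[I_{1,t}]\in A$, every $E(\xi\delta_t) = \stlim_i\theta_{1,t}(\xi\cdot u_{t,i})$ is a strict limit of elements of $I_{1,t}\subseteq A$ that is dominated by $[I_{1,t}]$, hence lies in $I_{1,t} + \C\cdot[I_{1,t}]\subseteq A$ --- more carefully, $E(\xi\delta_t)\cdot[I_{1,t}] = E(\xi\delta_t)$ and $E(\xi\delta_t)^*E(\xi\delta_t)\in I_{1,t}$, so $E(\xi\delta_t)$ lies in the hereditary subalgebra generated by $I_{1,t}$ and $[I_{1,t}]$ inside $\bid A$, which sits in $A$ as soon as $[I_{1,t}]\in A$. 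Thus $\tilde A = A$ for all $t$ simultaneously is equivalent to $[I_{1,t}]\in A$ for all $t\in S$.

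The final step identifies $[I_{1,t}]\in A$ with complementation of $I_{1,t}$ in $\s(\Hils_t)$. Recall $I_{1,t}$ is an ideal contained in $\s(\Hils_t)$. In general $[I_{1,t}]\in\bid A$ is a central projection of $\bid A$ (the support projection of the ideal $I_{1,t}$), and $[\s(\Hils_t)]$ is the support projection of $\s(\Hils_t)$, with $[I_{1,t}]\le[\s(\Hils_t)]$. I claim $[I_{1,t}]\in A$ is equivalent to the existence of an ideal $J\idealin A$ with $J\subseteq\s(\Hils_t)$, $J\cap I_{1,t}=0$ and $J + I_{1,t} = \s(\Hils_t)$ --- i.e.\ $I_{1,t}$ is complemented in $\s(\Hils_t)$. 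Indeed, if such $J$ exists then $[J]$ and $[I_{1,t}]$ are orthogonal central projections with $[J] + [I_{1,t}] = [\s(\Hils_t)]$, and since $[I_{1,t}]$ acts as a multiplier of $\s(\Hils_t)$ (it is central in $\bid{\s(\Hils_t)}$ and fixes $I_{1,t}$, kills $J$), one checks $[I_{1,t}]$ restricted to $\s(\Hils_t)$ is the projection onto the direct summand $I_{1,t}$; being a central multiplier of the $\sigma$-unital-on-summands ideal $\s(\Hils_t)$ that is also a multiplier of the non-summand part trivially, it lies in $\Mult(\s(\Hils_t))$, but more is true: the map $a\mapsto (a|_{I_{1,t}}, 0)$ is an actual element of $A$ precisely because $\s(\Hils_t) = I_{1,t}\oplus J$ is an internal direct sum of ideals, so projection onto $I_{1,t}$ extended by the identity off $\s(\Hils_t)$\,\dots{} here I must be slightly careful, since $[I_{1,t}]$ should be a projection in $A$, not merely a multiplier: the correct statement is that $[I_{1,t}]\in\bid A$ lies in $A$ iff $I_{1,t}$ is a direct summand (as a closed ideal) of $A$ itself; but what we need is membership in $\tilde A$, and the argument of the second paragraph only forces $[I_{1,t}]$ as a multiplier of $I_{1,t}$, extended by zero, to lie in $\tilde A$ --- which lands in $A$ exactly when $I_{1,t}$ is complemented \emph{within} $\s(\Hils_t)$, since off $\s(\Hils_t)$ the relevant multipliers already act as zero. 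Conversely, if $[I_{1,t}]\in A$ then $[I_{1,t}]$ is a central projection of $A$ contained in $\s(\Hils_t)$, so $J\defeq (\s(\Hils_t))([I_{1,t}] - $ identity$)$ --- concretely $J = \{a\in\s(\Hils_t) : [I_{1,t}]a = 0\}$ --- is a closed ideal complementing $I_{1,t}$ in $\s(\Hils_t)$. Passing to $\Prim(A)$ via Lemma~\ref{lem:I_tu_explicit}, this says the open set $\bigcup_{v\le 1,t}D_v$ is clopen inside the open set $D_t$, which is the geometric content.

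The step I expect to be the main obstacle is pinning down exactly which multipliers of $I_{1,t}$ actually lie in $\tilde A$ --- whether the $\Cst$-algebra generated by the $E(\xi\delta_t)$ is all of $\Mult(I_{1,t})$, or only the hereditary piece $I_{1,t} + \C[I_{1,t}]$ (or something in between), and correspondingly whether the criterion should be phrased as "$[I_{1,t}]\in A$" versus "$\Mult(I_{1,t})\subseteq A$". The formula \eqref{eq:formula-cond.exp} together with the imprimitivity-bimodule structure of $\theta_{1,t}$ suggests that what is generated is precisely the $\Cst$-algebra $I_{1,t} + \C[I_{1,t}]$ inside $\Mult(I_{1,t})$ acting on $\Hils_t\cdot I_{1,t}$, since $\langle E(\xi\delta_t)\mid E(\eta\delta_t)\rangle$-type expressions recover $I_{1,t}$ and the strict limits of approximate units recover $[I_{1,t}]$ but nothing genuinely larger. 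Getting this identification clean --- and therefore arriving at the sharp equivalence with complementation of $I_{1,t}$ in $\s(\Hils_t)$ rather than in all of $A$ --- is the crux.
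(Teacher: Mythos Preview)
Your proposal has a genuine gap, and you have already put your finger on it in the final paragraph: the reduction to ``$[I_{1,t}]\in A$'' does not work, for two reasons.

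First, the step ``taking strict limits \dots one sees that $\tilde A$ contains the strict closure, hence in particular the support projection $[I_{1,t}]$'' is invalid. By definition $\tilde A$ is the \emph{norm} closure in~$\bid A$ of the $\Cst$\nb-subalgebra generated by $E(A\rtimes_\red S)$; it is not strictly or weakly closed, so you cannot pass to strict limits inside~$\tilde A$. In fact, when $I_{1,t}$ \emph{is} complemented in~$\s(\Hilm_t)$ the paper's argument shows $E(\Hilm_t)\subseteq I_{1,t}\subseteq A$, and then $\tilde A=A$ typically will \emph{not} contain~$[I_{1,t}]$ unless $I_{1,t}$ happens to be a direct summand of all of~$A$. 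Second, as you note yourself, $[I_{1,t}]\in A$ is equivalent to $I_{1,t}$ being complemented in~$A$, not merely in~$\s(\Hilm_t)$, so even if the reduction were valid it would give the wrong criterion.

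The paper avoids this detour entirely. The key computation (which you gesture at but do not exploit) is the identity
\[
E(\xi\delta_t)^* E(\eta\delta_t) = \braket{\xi}{\eta}
\qquad\text{in }\Mult(I_{1,t})\subseteq\bid A,
\]
that is, the element $\braket{\xi}{\eta}\in\s(\Hilm_t)$ viewed through the canonical map $\s(\Hilm_t)\to\Mult(I_{1,t})$, $a\mapsto a\cdot[I_{1,t}]$. For the forward direction, if $\s(\Hilm_t)=I_{1,t}\oplus I_{1,t}^\bot$ then $\Hilm_t=\Hilm_t\cdot I_{1,t}\oplus\Hilm_t\cdot I_{1,t}^\bot$; on the first summand $E$ is just~$\theta_{1,t}$, landing in~$I_{1,t}$, and on the second $E$ vanishes because $I_{1,t}\cdot I_{1,t}^\bot=0$. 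For the reverse direction, if $I_{1,t}$ is \emph{not} complemented in~$\s(\Hilm_t)$ then the image of $\s(\Hilm_t)\to\Mult(I_{1,t})$ is not contained in~$I_{1,t}$ (otherwise the kernel would be a complement). So there is $\xi\in\Hilm_t$ with $E(\xi)^*E(\xi)=\braket{\xi}{\xi}\cdot[I_{1,t}]\in\bar I_{1,t}\setminus I_{1,t}$; since any element of $A\cap\bar I_{1,t}$ already lies in~$I_{1,t}$ (test against a faithful representation of~$A/I_{1,t}$), this forces $E(\xi)^*E(\xi)\notin A$. This is the argument you should aim for: it sidesteps the question of exactly which multipliers lie in~$\tilde A$ by exhibiting a single explicit element $E(\xi)^*E(\xi)\in\tilde A\setminus A$.
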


\begin{proof}
  We use the description of the weak conditional expectation
  \(E\colon A\rtimes S\to \bid A\) in Lemma~\ref{lem:description-E}
  and the following computation.  Let \(a_1,a_2\in I_{1,t}\),
  \(\xi_1,\xi_2\in \Hilm_t\).  Then
  \begin{multline*}
    \braket{\Mult(\theta_{1,t})(\hat{\xi}_1)a_1}
    {\Mult(\theta_{1,t})(\hat{\xi}_2)a_2}
    = \braket{\theta_{1,t}(\xi_1 a_1)}{\theta_{1,t}(\xi_2 a_2)}
    \\= \braket{\xi_1 a_1}{\xi_2 a_2}
    = a_1^* \braket{\xi_1}{\xi_2} a_2.
  \end{multline*}
  Hence \(\Mult(\theta_{1,t})(\hat{\xi}_1)^*
  \Mult(\theta_{1,t})(\hat{\xi}_2) = \braket{\xi_1}{\xi_2}\) holds
  in \(\Mult(I_{1,t}) \subseteq \bid{A}\) for all \(\xi_1,\xi_2\in
  \Hilm_t\).

  Assume first that for each \(t\in S\),
  \(I_{1,t}\)
  is a complemented ideal in~\(\s(\Hilm_t)\),
  that is, \(\s(\Hilm_t) = I_{1,t} \oplus I_{1,t}^\bot\)
  for some ideal
  \(I_{1,t}^\bot\idealin \s(\Hilm_t) \idealin A\).
  Since~\(\Hilm_t\)
  is a full right module over~\(\s(\Hilm_t)\),
  we may split \(\Hilm_t \cong \Hilm_t^1 \oplus \Hilm_t^\bot\),
  where \(\Hilm_t^1\)
  and \(\Hilm_t^\bot\)
  are Hilbert modules over \(I_{1,t}\)
  and~\(I_{1,t}^\bot\),
  respectively.  Hence \(\Hilm_t\cdot I_{1,t} = \Hilm_t^1\),
  and this is isomorphic to~\(I_{1,t}\)
  by~\(\theta_{1,t}\).
  The map to \(\Mult(I_{1,t})\)
  annihilates~\(\Hilm_t^\bot\)
  because \(I_{1,t}\cdot I_{1,t}^\bot = 0\).
  Thus the image of~\(\Hilm_t\)
  in~\(\Mult(I_{1,t})\)
  is simply~\(I_{1,t}\),
  which is contained in~\(A\).
  Since this holds for all \(t\in S\)
  by assumption, we get \(E(A\rtimes_\alg S)\subseteq A\)
  and thus \(\tilde{A} = A\) as asserted.

  Conversely, assume that~\(I_{1,t}\) is not complemented
  in~\(\s(\Hilm_t)\) for some \(t\in S\).  Then the image of the map
  \(\s(\Hilm_t)\to \Mult(I_{1,t})\) is not contained in~\(I_{1,t}\):
  otherwise, the kernel of this map would be a complementary ideal
  for~\(I_{1,t}\) in~\(\s(\Hilm_t)\).  Hence there is an element
  \(\xi\in \Hilm_t\) such that \(\braket{\xi}{\xi} \in\s(\Hilm_t)\)
  maps to an element of~\(\Mult(I_{1,t})\) that does not belong
  to~\(I_{1,t}\).  Hence \(E(\xi)^* E(\xi) =
  \Mult(\theta_{1,t})(\hat{\xi})^*\Mult(\theta_{1,t})(\hat{\xi}) =
  \braket{\xi}{\xi}\) does not belong to~\(I_{1,t}\).  Any normal
  representation of~\(\bid{A}\) that vanishes on~\(I_{1,t}\)
  annihilates \(E(\xi)^* E(\xi)\) because it belongs
  to~\(\bar{I}_{1,t}\).  In particular, the normal extension of a
  faithful representation of~\(A/I_{1,t}\) must annihilate
  \(E(\xi)^* E(\xi)\).  If \(E(\xi)^* E(\xi)\in A\), then this
  implies \(E(\xi)^* E(\xi)\in I_{1,t}\), which is false.  Thus
  \(\tilde{A}\neq A\).
\end{proof}

\begin{corollary}[\cite{Exel-Pardo:Tight_groupoid}*{Theorem~3.15}]
  \label{cor:Hausdorffness-Transf-Groupoid}
  The transformation groupoid~\(X\rtimes S\) of an action of a
  unital inverse semigroup~\(S\) on a locally compact Hausdorff
  space~\(X\) by partial homeomorphisms \(\alpha_t\colon D_{t^*}\to
  D_t\) is Hausdorff if and only if, for each \(t\in S\), the
  \textup{(}open\textup{)} set \(D_{1,t}\defeq \bigcup_{e\le 1,t}
  D_e\) is closed in~\(D_t\).
\end{corollary}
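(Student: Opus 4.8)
The plan is to read the corollary off from Propositions~\ref{pro:tilde_A_commutative} and~\ref{pro:A_tilde_A}, using only the standard dictionary between ideals of a commutative $\Cst$\nb-algebra and open subsets of its spectrum. Write $A=\Cont_0(X)$; the action by partial homeomorphisms induces an action of~$S$ on~$A$ by Hilbert bimodules as in Example~\ref{exa:partial_actions}, and since~$X$ is locally compact Hausdorff we have $\Prim(A)=X$, with the partial homeomorphisms $\alpha_t\colon D_{t^*}\to D_t$ being exactly the induced action on $\Prim(A)$ and $\s(\Hilm_t)=\Cont_0(D_{t^*})$.

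First I would apply Proposition~\ref{pro:tilde_A_commutative}: the transformation groupoid $X\rtimes S$ is Hausdorff if and only if the weak conditional expectation $E\colon A\rtimes S\to\bid A$ is $A$\nb-valued, equivalently $\tilde A=A$. Then Proposition~\ref{pro:A_tilde_A} rewrites $\tilde A=A$ as the purely ideal-theoretic condition that, for every $t\in S$, the ideal $I_{1,t}$ be complemented inside the larger ideal $\s(\Hilm_t)$.

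It remains to convert this into the topological statement. By Lemma~\ref{lem:I_tu_explicit}, $I_{1,t}$ corresponds to $\bigcup_{v\le 1,t} D_v$; since $v\le 1$ forces~$v$ idempotent, this is precisely the open set $D_{1,t}=\bigcup_{e\le 1,t} D_e\subseteq D_{t^*}$. Now an ideal $\Cont_0(W)\idealin\Cont_0(U)$ with $W\subseteq U$ open is complemented if and only if~$W$ is also closed in~$U$: when~$W$ is clopen the complement is $\Cont_0(U\setminus W)$, and conversely a complementary ideal must be $\Cont_0(W')$ with $W\cap W'=\emptyset$ and $W\cup W'=U$, whence $W'=U\setminus W$ is open. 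So $\tilde A=A$ if and only if $D_{1,t}$ is closed in $D_{t^*}$ for all~$t$. Finally, the idempotents of~$S$ below~$t$ coincide with those below~$t^*$, so $D_{1,t}=D_{1,t^*}$; since $t\mapsto t^*$ is a bijection of~$S$, replacing~$t$ by~$t^*$ turns ``$D_{1,t}$ closed in $D_{t^*}$ for all~$t$'' into ``$D_{1,t}$ closed in $D_t$ for all~$t$'', which is the asserted equivalence.

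I do not expect a serious obstacle: the conceptual content sits in Propositions~\ref{pro:tilde_A_commutative} and~\ref{pro:A_tilde_A}, which we may invoke, and what is left is bookkeeping. The one point to get right is the role of the involution --- namely that $\s(\Hilm_t)=\Cont_0(D_{t^*})$ pairs $I_{1,t}$ with~$D_{t^*}$ rather than with~$D_t$, so that the relabelling $t\leftrightarrow t^*$ together with the identity $D_{1,t}=D_{1,t^*}$ is needed to land on the form stated --- plus the elementary observation that the complemented ideals of a commutative $\Cst$\nb-algebra are exactly the clopen pieces of its spectrum.
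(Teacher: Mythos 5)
Your proposal is correct and follows essentially the same route as the paper: identify $I_{1,t}$ with $\Cont_0(D_{1,t})$, note that a $\Cont_0$-ideal is complemented exactly when the corresponding open set is clopen, and then combine Propositions~\ref{pro:A_tilde_A} and~\ref{pro:tilde_A_commutative}. Your extra care about $\s(\Hilm_t)=\Cont_0(D_{t^*})$ and the relabelling $t\leftrightarrow t^*$ via $D_{1,t}=D_{1,t^*}$ is a point the paper's proof passes over silently, and it is handled correctly.
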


\begin{proof}
  As discussed in Section~\ref{sec:commutative_E}, in the present
  situation the ideal~\(I_{1,t}\) corresponds to the open
  set~\(D_{1,t}\) defined in the statement.  This open set is closed
  in~\(D_t\) if and only if the ideal \(I_{1,t}=\Cont_0(D_{1,t})\)
  is complemented in \(\s(\Hils)=\Cont_0(D_t)\).  The result follows
  from Propositions \ref{pro:A_tilde_A}
  and~\ref{pro:tilde_A_commutative}.
\end{proof}

Next we reformulate the condition in Proposition~\ref{pro:A_tilde_A}
using the transformation groupoid \(G \defeq \Prim(A)\rtimes S\).
We may build this as usual for an inverse semigroup action, even
if~\(\Prim(A)\)
is not Hausdorff.  Its object space is~\(\Prim(A)\),
and it is \'etale, that is, the range and source maps are local
homeomorphisms.  Arrows are equivalence classes of pairs~\((t,\prid)\)
for \(\prid\in D_{t^*}\subseteq \Prim(A)\),
where~\(D_{t^*}\)
corresponds to the ideal~\(\s(\Hilm_t)\)
as above.  Two pairs \((t,\prid)\)
and~\((t',\prid')\)
are equivalent if \(\prid=\prid'\)
and there is \(v\in S\)
with \(v\le t, t'\)
and \(\prid\in D_{v^*}\).
There is a unique topology on \((\Prim(A)\rtimes S)^1\)
for which \([t,\prid]\mapsto \prid\)
is a homeomorphism onto~\(D_{t^*}\)
for each \(t\in S\).
The subsets \(U_t \defeq \{[t,\prid]\mid \prid \in D_{t^*}\}\)
form an open covering of \(\Prim(A)\rtimes S\) by bisections.

\begin{theorem}
  \label{the:conditional_expectation}
  The conditional expectation~\(E\) maps \(A\rtimes_\red S\)
  onto~\(A\) if and only if the subset of units \(\Prim(A)\) is
  closed in the arrow space \(\Prim(A)\rtimes S\).
\end{theorem}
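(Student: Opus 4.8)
The plan is to chain three equivalences together: an algebraic reformulation of ``\(E\) is \(A\)\nb-valued'', a translation of that condition into open subsets of~\(\Prim(A)\), and a translation of the latter into the topology of the bisection charts~\(U_t\) covering~\(G^1\), where \(G=\Prim(A)\rtimes S\).

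First I would observe that, since \(E|_A=\Id_A\), the image \(E(A\rtimes_\red S)\subseteq\bid A\) always contains~\(A\); hence \(E\) maps \(A\rtimes_\red S\) \emph{onto}~\(A\) exactly when \(E(A\rtimes_\red S)\subseteq A\). As \(\tilde A\) is by definition the \(\Cst\)\nb-subalgebra of~\(\bid A\) generated by \(E(A\rtimes_\red S)\) and always contains~\(A\), this happens if and only if \(\tilde A=A\). By Proposition~\ref{pro:A_tilde_A}, \(\tilde A=A\) holds precisely when, for every \(t\in S\), the ideal~\(I_{1,t}\) is complemented in the ideal~\(\s(\Hilm_t)\).

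Next I would pass to~\(\Prim(A)\). Under the lattice isomorphism between ideals of~\(A\) and open subsets of~\(\Prim(A)\), the ideal~\(\s(\Hilm_t)\) corresponds to~\(D_{t^*}\), and by Lemma~\ref{lem:I_tu_explicit} the ideal~\(I_{1,t}\) corresponds to the open set \(D_{1,t}\defeq\bigcup_{v\le 1,t} D_v\); this is contained in~\(D_{t^*}\) because \(v\le t\) gives \(\s(\Hilm_v)\subseteq\s(\Hilm_t)\). An inclusion of ideals \(J\idealin I\) is complemented if and only if the complement of the corresponding open set~\(V_J\) inside~\(V_I\) is again open, i.e.\ if and only if \(V_J\) is relatively closed in~\(V_I\). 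So the criterion becomes: for each \(t\in S\), the set~\(D_{1,t}\) is closed in~\(D_{t^*}\).

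Finally I would bring in the groupoid~\(G\). The bisections \(U_t=\{[t,\prid]\mid\prid\in D_{t^*}\}\) form an open cover of~\(G^1\), and \([t,\prid]\mapsto\prid\) is a homeomorphism \(U_t\congto D_{t^*}\); in particular \(G^0=U_1\). Since the~\(U_t\) are open and cover~\(G^1\), a subset of~\(G^1\) is closed if and only if its intersection with each~\(U_t\) is closed in~\(U_t\). It therefore remains to check that, under the chart \(U_t\congto D_{t^*}\), the set \(G^0\cap U_t\) corresponds exactly to~\(D_{1,t}\); this identification is the only point that needs a little care, and it is where the equivalence relation defining arrows enters. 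Indeed, \([t,\prid]\in G^0\) iff \([t,\prid]=[1,\prid]\), iff there is \(v\in S\) with \(v\le 1,t\) and \(\prid\in D_{v^*}\); and since \(v\le 1\) forces~\(v\) to be idempotent we have \(v^*=v\) and \(D_{v^*}=D_v\), so this says precisely \(\prid\in D_{1,t}\). Putting the three equivalences together --- \(E\) onto~\(A\) \(\Leftrightarrow\) \(I_{1,t}\) complemented in \(\s(\Hilm_t)\) for all~\(t\) \(\Leftrightarrow\) \(D_{1,t}\) closed in~\(D_{t^*}\) for all~\(t\) \(\Leftrightarrow\) \(G^0\) closed in~\(G^1\) --- proves the theorem. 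Specialising to the case where \(\Prim(A)\) is Hausdorff and invoking Lemma~\ref{lem:G_Hausdorff_iff_G0_Closed} recovers Corollary~\ref{cor:Hausdorffness-Transf-Groupoid}.
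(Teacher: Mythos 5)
Your proposal is correct and follows essentially the same route as the paper: reduce to Proposition~\ref{pro:A_tilde_A}, translate "\(I_{1,t}\) complemented in \(\s(\Hilm_t)\)" into "\(D_{1,t}\) relatively closed in the corresponding open subset of \(\Prim(A)\)", and then use the open cover of the arrow space by the bisection charts, on which the units correspond exactly to \(D_{1,t}\). Your bookkeeping with \(D_{t^*}\) versus the paper's \(D_t\) is only a relabelling (harmless, since the condition is quantified over all \(t\) and \(I_{1,t}=I_{1,t^*}\)), so there is no substantive difference.
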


\begin{proof}
  Let \(\alpha_t\colon D_{t^*} \to D_t\)
  be the partial homeomorphisms on~\(\Prim(A)\)
  that describe the action of~\(S\).
  Let \(D_{1,t}\defeq \bigcup_{v\le 1,t} D_v\)
  as in Lemma~\ref{lem:I_tu_explicit}.  For \(\prid\in \Prim(A)\)
  and \(t\in S\),
  we have \([t,\prid]=[1,\prid]\) if and only if \(\prid\in D_{1,t^*}\).

  The ideal~\(I_{1,t}\)
  is complemented in~\(\s(\Hilm_t)\)
  if and only if \(D_t = D_{1,t} \sqcup D_{1,t}^\bot\)
  for some open subset~\(D_{1,t}^\bot\)
  in~\(\Prim(A)\),
  namely, the open subset corresponding to the complement
  of~\(I_{1,t}\).
  Of course, \(D_{1,t}^\bot = D_t\setminus D_{1,t}\),
  so such a decomposition exists if and only if
  \(D_t\setminus D_{1,t}\)
  is open; equivalently, \(D_{1,t}\)
  is relatively closed in~\(D_t\).
  Thus the criterion for \(E(A\rtimes_\red S)\subseteq A\)
  in Proposition~\ref{pro:A_tilde_A} is equivalent to~\(D_{1,t}\)
  being relatively closed in~\(D_t\)
  for each \(t\in S\).
  The open subsets \(D_t\subseteq (\Prim(A)\rtimes S)^1\)
  form an open covering, and \(D_{1,t} = D_t \cap \Prim(A)\).
  Hence~\(D_{1,t}\)
  is relatively closed in~\(D_t\)
  for each \(t\in S\)
  if and only if the subset of units~\(\Prim(A)\)
  is closed in~\((\Prim(A)\rtimes S)^1\).
\end{proof}

The theorem above is related to
\cite{Brown-Nagy-Reznikoff-Sims-Williams:Cartan_etale}*{Corollary
  4.4}.

The existence of a conditional
expectation \(A\rtimes_\red S\to A\) should be viewed as
an analogue for inverse semigroup crossed products of Hausdorffness
for groupoid crossed products.  By
Lemma~\ref{lem:G_Hausdorff_iff_G0_Closed}, a groupoid with Hausdorff
object space has Hausdorff arrow space if and only if the set of units
is closed.  Theorem~\ref{the:conditional_expectation} involves the
same condition for the groupoid \(\Prim(A)\rtimes S\),
which may have a non-Hausdorff object space.  Thus the condition of
``having a closed set of units'' captures the good features of
Hausdorff groupoids in the context of inverse semigroup actions.
The next result shows that this property behaves well with respect to equivariant maps.

\begin{lemma}
  \label{lem:closed_units_inherited}
  Let \(X\)
  and~\(Y\)
  be topological spaces with \(S\)\nb-actions
  by partial homeomorphisms and let \(f\colon X\to Y\)
  be an \(S\)\nb-equivariant
  continuous map.  If the units are closed in~\(Y\rtimes S\),
  then the same happens in~\(X\rtimes S\).
\end{lemma}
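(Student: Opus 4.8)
The plan is to reduce the closedness of the units to a statement about each bisection of the transformation groupoid and then transport the relevant open sets along $f$ using equivariance. Recall from the discussion preceding Theorem~\ref{the:conditional_expectation} that for an $S$-action on a space $Z$ by partial homeomorphisms $\alpha^Z_t\colon D^Z_{t^*}\to D^Z_t$, the units $Z$ are closed in $(Z\rtimes S)^1$ if and only if, for each $t\in S$, the open set $D^Z_{1,t}\defeq\bigcup_{e\le 1,t}D^Z_e$ is relatively closed in $D^Z_t$; this is exactly the content of Theorem~\ref{the:conditional_expectation} (via Proposition~\ref{pro:A_tilde_A}) specialised to the commutative case, or one may read it off directly from the topology on $(Z\rtimes S)^1$ described there. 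So it suffices to show: if $D^Y_{1,t}$ is relatively closed in $D^Y_t$ for each $t\in S$, then $D^X_{1,t}$ is relatively closed in $D^X_t$ for each $t\in S$.

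First I would record how the domains behave under $f$. Since $f\colon X\to Y$ is $S$-equivariant, for each $t\in S$ we have $f^{-1}(D^Y_{t})= D^X_{t}$ (the domain $D_t$ corresponds to the "$rr^*$-source" of the partial map, and equivariance forces the domains of partial homeomorphisms to match under $f$, i.e. $x\in D^X_{t}$ precisely when $f(x)\in D^Y_{t}$), and moreover $f$ intertwines $\alpha^X_t$ with $\alpha^Y_t$ on these domains. In particular, for an idempotent $e\le 1,t$ we get $f^{-1}(D^Y_e)=D^X_e$, and hence, taking unions over $e\le 1,t$,
\[
f^{-1}(D^Y_{1,t}) = \bigcup_{e\le 1,t} f^{-1}(D^Y_e) = \bigcup_{e\le 1,t} D^X_e = D^X_{1,t}.
\]
I would double-check this equality for the full set $D^X_{1,t}$: the inclusion $D^X_{1,t}\subseteq f^{-1}(D^Y_{1,t})$ is immediate from equivariance, and the reverse inclusion also holds because membership of $x$ in some $D^X_e$ with $e\le 1,t$ is detected by $f(x)\in D^Y_e$, which is what the identity $f^{-1}(D^Y_e)=D^X_e$ says.

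Now the conclusion is a routine point-set argument. Fix $t\in S$ and assume $D^Y_{1,t}$ is relatively closed in $D^Y_t$, say $D^Y_{1,t}=C\cap D^Y_t$ for a closed set $C\subseteq Y$. Restricting $f$ to the continuous map $f\colon D^X_t\to D^Y_t$ (which is well defined since $f^{-1}(D^Y_t)=D^X_t$, so $D^X_t$ is open and $f(D^X_t)\subseteq D^Y_t$), we get
\[
D^X_{1,t} = f^{-1}(D^Y_{1,t}) = f^{-1}(C\cap D^Y_t) = f^{-1}(C)\cap D^X_t,
\]
which exhibits $D^X_{1,t}$ as the intersection of the closed set $f^{-1}(C)$ with $D^X_t$, hence relatively closed in $D^X_t$. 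As this holds for every $t\in S$, the units are closed in $X\rtimes S$ by Theorem~\ref{the:conditional_expectation} (applied to the commutative $\Cst$-algebras $\Cont_0(X)$, $\Cont_0(Y)$, or directly via the topological description of the transformation groupoid). The only point requiring genuine care—hence the main obstacle—is the first step: verifying carefully that $S$-equivariance of $f$ yields $f^{-1}(D^Y_e)=D^X_e$ for idempotents $e$, i.e. that the domains of the partial homeomorphisms are pulled back exactly (not merely $f^{-1}(D^Y_e)\supseteq D^X_e$); once that is in hand, everything else is a one-line topological manipulation. If $S$ lacks a unit one first passes to $S^+$ as in Remark~\ref{rem:extend_action_unit}, which does not change the transformation groupoid or its units.
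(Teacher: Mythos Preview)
Your proof is correct and takes essentially the same approach as the paper: both reduce to showing that $D^X_{1,t}$ is relatively closed in $D^X_t$ by pulling back the corresponding sets in~$Y$ along~$f$. The paper phrases this via the induced continuous functor $f_*\colon X\rtimes S\to Y\rtimes S$ and the bisections $D^X_t = f_*^{-1}(D^Y_t)$ in the groupoids (simply asserting this identity without further comment on the equivariance point you flag), but the content is identical.
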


\begin{proof}
  The map~\(f\)
  induces a continuous functor \(f_*\colon X\rtimes S\to Y\rtimes S\),
  mapping the germ of \(t\in S\)
  at \(x\in X\)
  to the germ of~\(t\)
  at~\(f(x)\).
  Each \(t\in S\)
  gives bisections \(D^X_t\subseteq X\rtimes S\)
  and \(D^Y_t\subseteq Y\rtimes S\).
  By construction, \(D^X_t = f^{-1}(D^Y_t)\).
  These bisections give an open covering of the respective arrow
  spaces.  So the unit bisection~\(D_1\)
  is closed if and only if \(D_1\cap D_t\)
  is relatively closed in~\(D_t\)
  for each \(t\in S\).
  If this holds in~\(Y\),
  then \(D^X_t \cap D^X_1 = f^{-1}(D^Y_t \cap D^Y_1)\)
  is also relatively closed in \(f^{-1}(D^Y_t) = D^X_t\).
\end{proof}

\begin{example}
  \label{exa:Hausdorff_groupoid_action_closed_units}
  Let~\(G\)
  be a Hausdorff groupoid and let~\(S\)
  be a wide inverse semigroup of bisections of~\(G\),
  so that \(G\cong G^0\rtimes S\).
  Let~\(A\)
  be a \(\Cst\)\nb-algebra
  with an action of~\(G\)
  by \(\Cst\)\nb-correspondences;
  that is, \(A\)
  is the space of \(\Cont_0\)\nb-sections
  on~\(G^0\)
  of a Fell bundle over~\(G\).
  Turn this Fell bundle over~\(G\)
  into an action of~\(S\)
  with an
  \(S\)\nb-equivariant
  continuous map \(\Prim(A)\to G^0\)
  as in \cite{Buss-Meyer:Actions_groupoids}*{Theorem 6.13}.
  Lemma~\ref{lem:G_Hausdorff_iff_G0_Closed} shows that the units in
  \(G= G^0\rtimes S\)
  are closed.  Hence the units in \(\Prim(A)\rtimes S\)
  are closed by Lemma~\ref{lem:closed_units_inherited}.  Thus our
  conditional expectation~\(E\)
  maps \(A\rtimes G\cong A\rtimes S\)
  to~\(A\).
  We may also construct the conditional
  expectation \(A\rtimes G\to A\) directly.
\end{example}

\begin{example}
  \label{exa:complemented_action}
  Paterson~\cite{Paterson:Groupoids} associates a certain locally
  compact, totally disconnected, possibly non-Hausdorff
  groupoid~\(G_P(S)\) to any inverse semigroup~\(S\).  This is the
  transformation groupoid for the canonical action of~\(S\) on the
  spectrum of the semilattice \(E=E(S)\) endowed with the totally
  disconnected Hausdorff topology from the product space
  \(\{0,1\}^E\).  We denote this spectrum by~\(\widehat{E}_P\), so
  Paterson's groupoid is \(G_P(S) =\widehat{E}_P\rtimes S\).
  This groupoid has the universal property that
  there is a natural bijection between actions of~\(G_P(S)\) on a
  topological space~\(X\) and actions of~\(S\) on~\(X\) by partial
  homeomorphisms with clopen domains and codomains (compare
  \cite{Steinberg:Groupoid_approach}*{Proposition~5.5} and
  \cite{Paterson:Groupoids}*{Section~4.3}).  The universal property
  of~\(G_P(S)\) follows from that of the universal groupoid \(G(S)
  \defeq \widehat{E}\rtimes S\) constructed
  in~\cite{Buss-Exel-Meyer:InverseSemigroupActions}.
  Here~\(\widehat{E}\) is also the spectrum of~\(E(S)\), that is, it
  is equal to~\(\widehat{E}_P\) as a set, but it carries a different,
  non-Hausdorff, topology, which will be explained below in the proof
  of Proposition~\ref{pro:E-unitary}.
  The space~\(\widehat{E}\) has the universal property that continuous
  maps from a topological space~\(X\) to~\(\widehat{E}\) correspond
  bijectively to semilattice maps (preserving zero and unit)
  from~\(E\) to the lattice of open subsets of~\(X\).  The map \(X\to
  \widehat{E}\) is continuous as a map to~\(\widehat{E}_P\) if and
  only if~\(E\) maps into the sublattice of clopen subsets of~\(X\).

  Call a Hilbert bimodule~\(\Hilm\) over a \(\Cst\)\nb-algebra~\(A\)
  \emph{complemented} if the ideals \(\s(\Hilm)\) and~\(\rg(\Hilm)\)
  are complemented ideals, that is, \(A\cong \s(\Hilm)\oplus I_1\)
  and \(A\cong \rg(\Hilm)\oplus I_2\) for certain ideals \(I_1,
  I_2\idealin A\), which are automatically unique.
  Equivalently, the domain and codomain of the partial homeomorphism
  of~\(\Prim(A)\) associated to~\(\Hilm\) are clopen.

  Let~\(S\) be a unital inverse semigroup, \(A\) a
  \(\Cst\)\nb-algebra, and let \((\Hilm_t)_{t\in S}\) be an action
  by complemented Hilbert bimodules.  Then the induced action
  of~\(S\) on \(\Prim(A)\) has clopen domains and codomains by
  assumption.  Thus the complemented actions of~\(S\) are in
  bijection with Fell bundles over Paterson's groupoid~\(G_P(S)\).

  As a consequence, Paterson's groupoid~\(G_P(S)\) is Hausdorff if
  and only if the conditional expectation on~\(A\rtimes S\) takes
  values in~\(A\) for any \emph{complemented} action, see
  Example~\ref{exa:Hausdorff_groupoid_action_closed_units}.
  Steinberg~\cite{Steinberg:Groupoid_approach} and Paterson
  (see~\cite{Paterson:Groupoids}*{Corollary~4.3.1}) characterise
  when
  Paterson's groupoid is Hausdorff: this happens if and only if for
  all \(t,u\in S\), the set \(\{v\in S \mid v\le t,u\}\) is finitely
  generated as an ordered set, that is, there is a finite set
  \(F\subseteq S\) such that
  \[
  \{v\in S \mid v\le t,u\} = \{v\in S \mid v\le w
  \text{ for some }w\in F\}.
  \]
  Since \(G_P(S)\) is a transformation groupoid \(X\rtimes S\),
  Corollary~\ref{cor:Hausdorffness-Transf-Groupoid} characterises
  when~\(G_P(S)\) is Hausdorff; in this form, this appears in
  \cite{Paterson:Groupoids}*{Proposition~4.3.6}.
\end{example}

\begin{example}
  \label{exa:tight_groupoid_Hausdorff}
  The tight groupoid \(G_\textup{tight}(S)\) of an inverse
  semigroup~\(S\) is the restriction of Paterson's groupoid to a
  certain closed, invariant subset
  (see~\cite{Exel:Inverse_combinatorial}).  Call an inverse
  semigroup action on a \(\Cst\)\nb-algebra \emph{tight} if it comes
  from an action of the tight groupoid (see
  \cite{Buss-Meyer:Actions_groupoids}*{Theorem~6.13}).  Our results
  show that the tight groupoid of~\(S\) is Hausdorff if and only if
  the weak conditional expectation on~\(A\rtimes S\) takes values
  in~\(A\) for each tight action of~\(S\) on a \(\Cst\)\nb-algebra.
  Exel and Pardo
  characterise when the tight groupoid is Hausdorff
  in~\cite{Exel-Pardo:Tight_groupoid}*{Theorem~3.16}.
\end{example}

Paterson's groupoid and the tight groupoid of an inverse semigroup
can only account for complemented \(S\)\nb-actions on
\(\Cst\)\nb-algebras because they have Hausdorff object space.  The
universal \(S\)\nb-action constructed
in~\cite{Buss-Exel-Meyer:InverseSemigroupActions}, which takes place
on a certain non-Hausdorff space~\(\hat{E}\), allows us to get rid
of the assumption on complements; we recall its definition during
the proof of the following proposition.

\begin{proposition}
  \label{pro:E-unitary}
  Let~\(S\) be an inverse semigroup with zero and unit.  The
  following are equivalent:
  \begin{enumerate}
  \item \label{enum:E-unitary1} \(S\) is \(E^*\)\nb-unitary: if
    \(e,t\in S\) satisfy \(e^2=e\) and \(e\le t\), then
    \(e=0\) or \(t^2=t\);
  \item \label{enum:E-unitary2} if \(e,t\in S\) satisfy \(e \le
    1,t\), then \(e=0\) or \(t\le 1\);
  \item \label{enum:E-unitary3} the space of units is closed in the
    transformation groupoid \(\hat{E}\rtimes S\) for the universal
    \(S\)\nb-action;
  \item \label{enum:E-unitary4} the space of units is closed in the
    transformation groupoid for any zero-preserving \(S\)\nb-action
    by partial homeomorphisms;
  \item \label{enum:E-unitary5} the weak conditional
    expectation~\(E\) takes values in~\(A\) for any zero-preserving
    action of~\(S\) on a \(\Cst\)\nb-algebra by Hilbert bimodules.
  \end{enumerate}
  In particular, if~\(S\) is \(E^*\)\nb-unitary, then the
  transformation groupoid \(X\rtimes S\) is Hausdorff for any
  zero-preserving action of~\(S\) on a Hausdorff space~\(X\).
\end{proposition}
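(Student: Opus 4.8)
The plan is to prove the chain of equivalences by establishing \ref{enum:E-unitary1}\(\Leftrightarrow\)\ref{enum:E-unitary2}, then \ref{enum:E-unitary2}\(\Leftrightarrow\)\ref{enum:E-unitary3}, then \ref{enum:E-unitary3}\(\Rightarrow\)\ref{enum:E-unitary4}\(\Rightarrow\)\ref{enum:E-unitary5}\(\Rightarrow\)\ref{enum:E-unitary2}, so that everything closes up. The implication \ref{enum:E-unitary1}\(\Leftrightarrow\)\ref{enum:E-unitary2} is essentially a translation: if \(e\le 1,t\) then \(e\le t\) and \(e\) idempotent, so \ref{enum:E-unitary1} gives \(e=0\) or \(t\) idempotent; an idempotent with \(e\le t\) and \(t\) idempotent satisfies \(t = t t^* t\) with \(t^*=t\), and \(e\le 1\) combined with \(e\le t\), \(t\) idempotent is automatic. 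Conversely from \ref{enum:E-unitary2}: given \(e\le t\) with \(e\) idempotent, then \(e = e^* e = e^* e^* t = e^* t t^* \cdot\dots\); more cleanly, \(e = e e^* \le 1\) and \(e\le t\) imply via \ref{enum:E-unitary2} that \(e=0\) or \(t\le 1\), and \(t\le 1\) means \(t\) is idempotent. For \ref{enum:E-unitary2}\(\Leftrightarrow\)\ref{enum:E-unitary3}: by Theorem~\ref{the:conditional_expectation} the space of units in \(\hat E\rtimes S\) is closed iff for each \(t\in S\) the open set \(D_{1,t} = \bigcup_{v\le 1,t} D_v\) is relatively closed in \(D_t\), where here the \(D_t\subseteq\hat E\) are the domains for the universal action. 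The universal action on \(\hat E\) has the property (from~\cite{Buss-Exel-Meyer:InverseSemigroupActions}) that \(D_v\) for an idempotent \(v\le 1,t\) is the basic open set \(\{\phi\in\hat E : \phi(v)=1\}\), and the point here is that \ref{enum:E-unitary2} says the only idempotents below \(1\) and \(t\) are \(0\) (when \(t\not\le 1\)) or all of \(\le t^*t\wedge 1\) (when \(t\le 1\)); translating this into the topology of \(\hat E\) gives exactly the relative closedness.

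For \ref{enum:E-unitary3}\(\Rightarrow\)\ref{enum:E-unitary4}: any zero-preserving \(S\)-action on a space \(X\) by partial homeomorphisms is classified by an \(S\)-equivariant continuous map \(X\to\hat E\) (this is the universal property of \(\hat E\) recalled in Example~\ref{exa:complemented_action}, using that zero-preservation forces the semilattice map \(E\to\) open subsets of \(X\) to send \(0\mapsto\emptyset\)). Then Lemma~\ref{lem:closed_units_inherited} transports closedness of the units from \(\hat E\rtimes S\) down to \(X\rtimes S\). The implication \ref{enum:E-unitary4}\(\Rightarrow\)\ref{enum:E-unitary5} follows because any zero-preserving action by Hilbert bimodules induces a zero-preserving action on \(\Prim(A)\) by partial homeomorphisms (via \cite{Buss-Meyer:Actions_groupoids}*{Lemma 6.12}); applying \ref{enum:E-unitary4} to \(X=\Prim(A)\) gives that the units are closed in \(\Prim(A)\rtimes S\), and Theorem~\ref{the:conditional_expectation} then yields \(E(A\rtimes_\red S)\subseteq A\). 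Finally \ref{enum:E-unitary5}\(\Rightarrow\)\ref{enum:E-unitary2}: given \(e,t\) with \(e\le 1,t\) and \(e\neq 0\), we must produce a zero-preserving action whose conditional expectation fails to be \(A\)-valued unless \(t\le 1\); the natural candidate is to cook up an action on a small commutative \(\Cst\)-algebra, essentially on the spectrum of the sub-semilattice generated by the relevant idempotents, where the failure of relative closedness of \(D_{1,t}\) in \(D_t\) is visible, and invoke Proposition~\ref{pro:A_tilde_A} or Theorem~\ref{the:conditional_expectation} in the contrapositive.

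The main obstacle I expect is the careful bookkeeping in \ref{enum:E-unitary2}\(\Leftrightarrow\)\ref{enum:E-unitary3} and in the construction for \ref{enum:E-unitary5}\(\Rightarrow\)\ref{enum:E-unitary2}: one needs the precise description of the non-Hausdorff topology on \(\hat E\) (which the paper defers to ``the proof of Proposition~\ref{pro:E-unitary}''), namely that a basic neighbourhood of \(\phi\in\hat E\) is determined by finitely many \(e\) with \(\phi(e)=1\) and \emph{one} further \(e\) with \(\phi(e)=0\), and to check that \(D_{1,t}\) being relatively closed in \(D_t\) in this topology is exactly the combinatorial condition \ref{enum:E-unitary2}. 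The last parenthetical statement of the proposition is then immediate: if \(S\) is \(E^*\)-unitary and \(X\) is Hausdorff with a zero-preserving action, then by \ref{enum:E-unitary4} the units are closed in \(X\rtimes S\), and since \(X=(X\rtimes S)^0\) is Hausdorff, Lemma~\ref{lem:G_Hausdorff_iff_G0_Closed} gives that \(X\rtimes S\) is Hausdorff.
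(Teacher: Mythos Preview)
Your chain of implications matches the paper's, and the arguments for \ref{enum:E-unitary1}\(\Leftrightarrow\)\ref{enum:E-unitary2}, \ref{enum:E-unitary3}\(\Rightarrow\)\ref{enum:E-unitary4}, and \ref{enum:E-unitary4}\(\Rightarrow\)\ref{enum:E-unitary5} are essentially the paper's. Two points need correction.

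First, your description of the topology on~\(\hat E\) is wrong. The basic open sets are exactly the \(U_e=\{\varphi:\varphi(e)=1\}\); there are \emph{no} basic conditions of the form \(\varphi(e)=0\). What you describe (finitely many ``\(=1\)'' conditions together with a ``\(=0\)'' condition) is closer to Paterson's Hausdorff topology on~\(\hat E_P\). The non-Hausdorffness of~\(\hat E\) comes precisely from omitting those complements, and the proof of \ref{enum:E-unitary2}\(\Leftrightarrow\)\ref{enum:E-unitary3} in the paper uses this: one shows \([t]\setminus[1]\) is open iff it equals \(\bigcup_{f\in L_t^\bot} U_f\), and then the lattice isomorphism between open subsets of~\(\hat E\) and ideals in~\(E\) forces \(t t^*\in L_t\cup L_t^\bot\), which unpacks to~\ref{enum:E-unitary2}.

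Second, and more seriously, your plan for closing the loop via \ref{enum:E-unitary5}\(\Rightarrow\)\ref{enum:E-unitary2} with a ``small commutative \(\Cst\)-algebra'' does not work as stated. A commutative~\(A\) has Hausdorff \(\Prim(A)\), whereas the universal space~\(\hat E\) is typically non-Hausdorff; so you cannot realise the universal action as \(\Prim(A)\rtimes S\) with commutative~\(A\). Your specific suggestion, the spectrum of a finite sub-semilattice, is discrete: every open subset is clopen, every ideal is complemented, and Proposition~\ref{pro:A_tilde_A} then says~\(E\) is \(A\)-valued regardless of whether~\ref{enum:E-unitary2} holds. The paper instead proves \ref{enum:E-unitary5}\(\Rightarrow\)\ref{enum:E-unitary3} by constructing, in Example~\ref{exa:graph_example}, a \emph{non-commutative} graph \(\Cst\)-algebra~\(A(E)\) whose primitive ideal space is exactly~\(\hat E\) with its non-Hausdorff topology, carrying an \(S\)-action that induces the universal action on~\(\hat E\). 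Theorem~\ref{the:conditional_expectation} then gives the implication directly. The passage through a non-commutative~\(A\) is the missing idea in your plan.
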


\begin{proof}
  \ref{enum:E-unitary1}\(\iff\)\ref{enum:E-unitary2} holds because
  an element \(e\in S\) of a unital inverse semigroup satisfies
  \(e^2 = e\) if and only \(e\le 1\).

  Our next goal is to prove
  \ref{enum:E-unitary2}\(\iff\)\ref{enum:E-unitary3}.  First we
  recall the definition of~\(\hat{E}\) and the \(S\)\nb-action on
  it, see~\cite{Buss-Exel-Meyer:InverseSemigroupActions}.  Elements
  of~\(\hat{E}\) are the characters of \(E= \{e\in S\mid e^2 =
  e\}\), that is, functions \(\varphi\colon E\to \{0,1\}\) with
  \(\varphi(e f) = \varphi(e) \varphi(f)\), \(\varphi(1)=1\) and
  \(\varphi(0)=0\).  The topology is generated by the open subsets
  \[
  U_e \defeq \{ \varphi\in \hat{E} \mid \varphi(e)=1\}
  \]
  for \(e\in E\).  If \(\varphi(e)=\varphi(f)=1\) for \(e,f\in E\),
  then \(\varphi(e f)=1\), and if \(\varphi(e)=0\) or
  \(\varphi(f)=0\), then \(\varphi(e f)=0\).  Thus \(U_e \cap U_f =
  U_{e f}\).  Hence the subsets~\(U_e\) even form a basis of the
  topology, and any open subset is the union of the subsets of the
  form~\(U_e\) that it contains.  The map sending an open
  subset~\(V\) of~\(\hat{E}\) to the set of all \(e\in E\) with
  \(U_e \subseteq V\) is an isomorphism from the lattice of open
  subsets in~\(\hat{E}\) to the lattice of \emph{ideals} in~\(E\) by
  \cite{Buss-Exel-Meyer:InverseSemigroupActions}*{Lemma 2.14}; an
  ideal in~\(E\) is a subset~\(I\) with \(0\in I\) and such that
  \(e\le f\) and \(f\in I\) implies \(e\in I\).

  The element \(t\in S\) acts on~\(\hat{E}\) by the homeomorphism
  \[
  c_t\colon U_{t^* t} \to U_{t t^*},\qquad
  c_t(\varphi)(e) = \varphi(t^* e t);
  \]
  this defines a zero-preserving action of~\(S\) on~\(\hat{E}\) by
  partial homeomorphisms, and it is the universal such action on a
  topological space by
  \cite{Buss-Exel-Meyer:InverseSemigroupActions}*{Theorem 2.22}.

  The arrows in \(\hat{E}\rtimes S\) are equivalence classes of
  pairs \((t,\varphi)\) with \(t\in S\), \(\varphi\in U_{t^* t}\),
  where \((t,\varphi)\sim (t',\varphi')\) if \(\varphi=\varphi'\)
  and there is \(e\in E\) with \(\varphi(e)=\varphi'(e)=1\) and \(t
  e= t' e\).  The topology is such that the projection map
  \([t,\varphi]\mapsto \varphi\) is a local homeomorphism.  The
  subsets \([t] \defeq \{[t,\varphi] \mid \varphi\in U_{t^* t}\}\)
  form an open cover~\(\hat{E}\rtimes S\), and \([t]\) is
  homeomorphic to~\(U_{t^* t}\).

  By definition, \([1]\) is the set of units, and \([t]\cap [1]\) is
  the subset of all \([t,\varphi]=[1,\varphi]\) with \(\varphi\in
  \bigcup_{e\le 1,t} U_e\).  Let \(L_t \defeq \{e\in E\mid e\le
  t\}\) and let
  \[
  L_t^\bot \defeq \{f\in E\mid f \le t^* t \text{ and } ef=0
  \text{ for all }e\in L_t\}.
  \]
  An open subset~\(U_f\) is contained in \([t]\setminus [1]\) if and
  only if \(f \le t^* t\) and \(U_e \cap U_f=\emptyset\) for all
  \(e\in L_t\).  Since
  \(U_e \cap U_f = U_{e f}\), which is only empty if \(e f=0\), the
  open subset~\(U_f\) is contained in \([t]\setminus [1]\) if and
  only if \(f\in L_t^\bot\).  The subset \([1]\subseteq
  \hat{E}\rtimes S\) is closed if and only if \([t]\cap [1]\) is
  relatively closed in~\([t]\) for each \(t\in S\), if and only if
  \([t]\setminus [1]\) is open for each \(t\in S\).  Since the
  subsets~\(U_f\) form a basis, this happens if and only if
  \([t]\setminus [1]\) is the union of the subsets~\(U_f\) it
  contains.  Thus the units are closed in \(\hat{E}\rtimes S\) if
  and only if \(\bigcup_{e\in L_t \cup L_t^\bot} U_e = U_{t t^*}\).
  By \cite{Buss-Exel-Meyer:InverseSemigroupActions}*{Lemma 2.14},
  this only happens if \(t t^* \in L_t \cup L_t^\bot\).  We
  have \(t t^* \in L_t\) if and only if \(t\le 1\), and \(t t^* \in
  L_t^\bot\) if and only if \(e\le 1,t\) only for \(e=0\).  Thus
  \ref{enum:E-unitary2}\(\iff\)\ref{enum:E-unitary3}.

  \ref{enum:E-unitary3}\(\Rightarrow\)\ref{enum:E-unitary4} follows
  from Lemma~\ref{lem:closed_units_inherited} and the universal
  property of the action of~\(S\) on~\(\hat{E}\), see
  \cite{Buss-Exel-Meyer:InverseSemigroupActions}*{Theorem 2.22}.
  \ref{enum:E-unitary4}\(\Rightarrow\)\ref{enum:E-unitary5} follows
  from Theorem~\ref{the:conditional_expectation}.
  Example~\ref{exa:graph_example} shows an action of~\(S\) on
  a \(\Cst\)\nb-algebra~\(A\) such that the induced action
  on~\(\Prim(A)\) is the universal action on~\(\hat{E}\).  Then
  Theorem~\ref{the:conditional_expectation} shows
  \ref{enum:E-unitary5}\(\Rightarrow\)\ref{enum:E-unitary3}, which
  finishes the proof of the proposition.
\end{proof}

\begin{example}
  \label{exa:graph_example}
  We construct an action of~\(S\) by Hilbert bimodules on a graph
  \(\Cst\)\nb-algebra, using their well-understood ideal structure,
  see~\cite{Bates-Hong-Raeburn-Szymanski:Ideal_structure}
  or~\cite{Raeburn:Graph_algebras}.  Our graph has vertex set \(E^*
  \defeq E\setminus\{0\}\).  If \(e,f\in E^*\) satisfy \(e\ge f\),
  then we put countably many edges \(e\to f\); otherwise there is no
  edge \(e\to f\).  Let~\(A(E)\) be the resulting graph
  \(\Cst\)\nb-algebra.  Since any vertex receives infinitely many
  edges \(e\to e\), any subset of the vertex set is ``saturated.''
  Thus the lattice of ideals in~\(A(E)\) is isomorphic to the
  lattice of ``hereditary'' subsets in~\(E^*\) by the main result
  of~\cite{Bates-Hong-Raeburn-Szymanski:Ideal_structure}, see
  also~\cite{Eilers-Ruiz-Sorensen:Amplified}.  A subset~\(U\)
  of~\(E^*\) is hereditary if \(e\ge f\) and \(e\in U\) implies
  \(f\in U\).  This means that \(\{0\} \cup U\) is an ideal
  in~\(E\), and these ideals in~\(E\) correspond to open subsets
  of~\(\hat{E}\) by
  \cite{Buss-Exel-Meyer:InverseSemigroupActions}*{Lemma 2.14}.  Thus
  \(\Prim(A(E))\) and~\(\hat{E}\) have isomorphic lattices of open
  subsets.  This implies that they are homeomorphic because both are
  sober spaces.  (Any ideal in~\(E\) contains~\(0\) by convention;
  this is why we left out \(0\in E\) to construct \(A(E)\).)

  We must still lift the action of~\(S\) on~\(\hat{E}\) to an action
  on~\(A(E)\).  If \(I\subseteq E\) is an ideal, then the
  corresponding ideal in \(A(E)\) is Morita--Rieffel equivalent to
  the graph \(\Cst\)\nb-algebra of the restriction of the graph
  above to the vertex set \(I^* = I\setminus\{0\}\subseteq E^*\).
  If \(t\in S\), then \(e\mapsto t e t^*\) maps the subsemilattice
  \(E_{\le t^* t}\subseteq E\) isomorphically onto~\(E_{\le t t^*}\)
  with inverse \(e\mapsto t^* e t\).  This is a semilattice isomorphism,
  that is, it preserves the order relation~\(\le\) and the zero
  elements.  Thus it induces a graph isomorphism between the
  restrictions of our graphs to \(E_{\le t^* t}\) and~\(E_{\le t
    t^*}\) and thus an isomorphism between the associated graph
  \(\Cst\)\nb-algebras.  These are canonically Morita--Rieffel
  equivalent to the ideals in the graph \(\Cst\)\nb-algebra~\(A(E)\)
  corresponding to the ideals \(E_{\le t t^*}\) and~\(E_{\le t^*
    t}\).  Hence~\(t\) induces a canonical Morita--Rieffel
  equivalence between these two ideals.  This gives an action
  of~\(S\) by Hilbert bimodules on the graph
  \(\Cst\)\nb-algebra~\(A(E)\) that induces the desired action on
  the open subsets of~\(\Prim(A)\) and hence on \(\Prim(A) \cong
  \hat{E}\).
\end{example}

An inverse semigroup~\(S\) is called \emph{\(E\)\nb-unitary} if for
all \(e,t\in S\), the condition \(e^2=e\le t\) implies \(t=t^2\).
The inverse semigroup~\(S\) is
\(E\)\nb-unitary if and only if \(S_0\defeq S\sqcup \{0\}\) (\(S\)~with
a formal zero added) is \(E^*\)\nb-unitary.  Actions of~\(S\) on a
\cstar{}algebra correspond bijectively to zero-preserving actions
of~\(S_0\), and this correspondence preserves crossed products and
weak conditional expectations.  So Proposition~\ref{pro:E-unitary}
gives the following for \(E\)\nb-unitary inverse semigroups:

\begin{corollary}
  Let~\(S\) be an inverse semigroup with unit, but possibly without
  zero.  The weak conditional expectation has values in~\(A\) for all
  actions of~\(S\) on \cstar{}algebras~\(A\) if and only if~\(S\) is
  \(E\)\nb-unitary, if and only if the transformation groupoid
  \(X\rtimes S\) has closed units for any action of~\(S\) on a
  topological space~\(X\).

  Therefore, if~\(S\) is \(E\)\nb-unitary, then~\(X\rtimes S\) is
  Hausdorff for any action of~\(S\) on a Hausdorff space~\(X\).
\end{corollary}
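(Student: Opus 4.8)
The plan is to deduce the corollary from Proposition~\ref{pro:E-unitary} by adjoining a formal zero to~\(S\). Put \(S_0 \defeq S\sqcup\{0\}\), which still has a unit and now also a zero, so Proposition~\ref{pro:E-unitary} applies to it. I would first invoke the two facts recalled just before the statement: \(S\) is \(E\)\nb-unitary if and only if \(S_0\) is \(E^*\)\nb-unitary, and actions of~\(S\) on a \cstar{}algebra~\(A\) by Hilbert bimodules correspond bijectively to zero\nb-preserving actions of~\(S_0\), compatibly with \(A\rtimes_\alg S\), \(A\rtimes S\), \(A\rtimes_\red S\) and with the weak conditional expectation~\(E\) of~\eqref{eq:cond.exp}. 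Concretely the correspondence sends an \(S\)\nb-action to its extension with \(\Hilm_0 \defeq 0\); since this summand and all relations and maps indexed by~\(0\) are forced and trivial (\(I_{t,0} = 0\), \(\Theta_{1,0} = 0\), etc.), nothing changes. The one extra point to check is that the very same zero\nb-adjunction also matches actions of~\(S\) on a topological space~\(X\) with zero\nb-preserving actions of~\(S_0\) on~\(X\), and that \(X\rtimes S = X\rtimes S_0\) as topological groupoids, because the arrow attached to~\(0\) has empty domain.

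Granting this, the equivalences follow at once. The condition ``\(E\) has values in~\(A\) for all actions of~\(S\)'' becomes ``\(E\) has values in~\(A\) for all zero\nb-preserving actions of~\(S_0\)'', which is~\ref{enum:E-unitary5} for~\(S_0\); the condition ``\(X\rtimes S\) has closed units for every action of~\(S\) on a space'' becomes~\ref{enum:E-unitary4} for~\(S_0\); and by the chain \ref{enum:E-unitary1}\(\iff\)\ref{enum:E-unitary4}\(\iff\)\ref{enum:E-unitary5} of Proposition~\ref{pro:E-unitary} both are equivalent to \(S_0\) being \(E^*\)\nb-unitary, hence to \(S\) being \(E\)\nb-unitary. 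Finally, if \(S\) is \(E\)\nb-unitary and \(X\) is Hausdorff, then \(X\rtimes S\) has Hausdorff object space~\(X\) and, by what was just shown, a closed set of units, so \(X\rtimes S\) is Hausdorff by Lemma~\ref{lem:G_Hausdorff_iff_G0_Closed}.

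I do not anticipate a genuine difficulty: the whole content is carried by Proposition~\ref{pro:E-unitary}, and the only thing needing attention is the bookkeeping of the zero\nb-adjunction --- confirming that adjoining the zero bimodule and the empty domain leaves \(A\rtimes_\alg S\), the partial isometries \(\Theta_{1,t}\), the map~\(E\), and the transformation groupoid untouched --- which is routine since each datum indexed by~\(0\) is uniquely determined and vanishes.
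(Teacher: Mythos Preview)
Your proposal is correct and matches the paper's own argument essentially verbatim: the paper also reduces to Proposition~\ref{pro:E-unitary} by adjoining a formal zero, using that \(S\) is \(E\)\nb-unitary if and only if \(S_0\) is \(E^*\)\nb-unitary and that actions of~\(S\) correspond to zero-preserving actions of~\(S_0\) with the same crossed products and weak conditional expectations. Your extra remarks on the bookkeeping (that \(\Hilm_0=0\), \(\Theta_{1,0}=0\), and the empty-domain arrow leave everything unchanged, including \(X\rtimes S\)) are exactly the routine verifications the paper leaves implicit.
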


\section{Faithful representations of the reduced crossed product}
\label{sec:faithful_rep}

Any representation \(\pi\colon A\to\Bound(\Hils)\)
extends uniquely to a weakly continuous representation of~\(\bid{A}\),
which we may then restrict to a representation~\(\tilde{\pi}\)
of~\(\tilde{A}\).
When is~\(\tilde{\pi}\)
faithful?  In view of Example~\ref{exa:action_compact_plus_unitary},
we only aim for a sufficient condition.  Our starting point is
Lemma~\ref{lem:description-E}:
the image of the subspace \(\Hilm_t\subseteq A\rtimes S\) under the
weak conditional expectation \(E\colon A\rtimes S\to \bid A\)
is contained in the multiplier algebra~\(\Mult(I_{1,t})\);
here we embed \(\Mult(I_{1,t}) \subseteq \bid{I_{1,t}} \subseteq \bid{A}\)
as before.  Hence~\(\tilde{A}\)
is contained in the \(\Cst\)\nb-subalgebra
of~\(\bid{A}\)
generated by \(\Mult(I_{1,t})\subseteq \bid{A}\)
for all \(t\in S\).
Taking even more generators, we let~\(\mathcal{I}\)
be the lattice of ideals generated by~\(I_{1,t}\)
for \(t\in S\), that is, we add finite intersections and unions of
ideals

Recall that \([I]\in \bid A\) for an ideal \(I \idealin A\)
denotes the support projection of~\(I\).

  \begin{lemma}
    \label{lem:central_projection_algebra}
    Let \(I,J\) be ideals of~\(A\).  Then \([I]\cdot [J] = [I\cap
    J]\) and \([I]\vee [J] = [I+ J]\).  That is, \(I\mapsto [I]\) is
    a lattice map.  In particular, \([I]+[J] = [I+J] + [I\cap J]\).
  \end{lemma}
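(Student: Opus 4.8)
The plan is to do everything inside the enveloping von Neumann algebra~$\bid A$, using two standard facts about support projections, both already implicit in the proof of Lemma~\ref{lem:I_tu_explicit}: for a (closed, two-sided) ideal $K\idealin A$ the projection~$[K]$ is central, the weak closure of~$K$ in~$\bid A$ is the weakly closed ideal $\bid K=[K]\bid A$, and~$[K]$ is the strong limit of any approximate unit of~$K$; moreover a projection $p\in\bid A$ satisfies $pa=a$ for all $a\in K$ if and only if $p\ge[K]$, and hence $K_1\subseteq K_2$ implies $[K_1]\le[K_2]$.

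Three of the required inequalities are then immediate. From $I,J\subseteq I+J$ we get $[I],[J]\le[I+J]$, hence $[I]\vee[J]\le[I+J]$; conversely $[I]\vee[J]$ dominates $[I]$ and~$[J]$, so it fixes every element of~$I$ and of~$J$, hence of~$I+J$, whence $[I+J]\le[I]\vee[J]$; thus $[I]\vee[J]=[I+J]$. For the meet, if $a\in I\cap J$ then $[I]a=a=[J]a$, so $[I][J]a=a$ and therefore $[I][J]\ge[I\cap J]$. The remaining point is the reverse inequality $[I][J]\le[I\cap J]$. Here I would take approximate units $(e_\alpha)$ of~$I$ and $(f_\beta)$ of~$J$; then $e_\alpha\to[I]$ and $f_\beta\to[J]$ strongly in~$\bid A$, so by joint strong continuity of multiplication on bounded sets the net $(e_\alpha f_\beta)$, indexed by the product directed set, converges strongly to $[I][J]$. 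Each $e_\alpha f_\beta$ lies in $I\cap J$ because $I$ and~$J$ are two-sided ideals, so $[I][J]$ lies in the weak closure $\bid{(I\cap J)}=[I\cap J]\bid A$; multiplying by the unit $[I\cap J]$ of this weakly closed ideal gives $[I\cap J][I][J]=[I][J]$, i.e.\ $[I][J]\le[I\cap J]$ since these projections commute. Combining, $[I][J]=[I\cap J]$, so $I\mapsto[I]$ is a lattice homomorphism.

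The ``in particular'' clause is then just the identity $p+q=(p\vee q)+(p\wedge q)$ for commuting projections: $[I]$ and~$[J]$ are central, hence commute, with $[I]\wedge[J]=[I][J]$ and $[I]\vee[J]=[I]+[J]-[I][J]$, so adding these gives $[I]+[J]=([I]\vee[J])+([I][J])=[I+J]+[I\cap J]$ by the two identities just proved. The only step needing a little care is the strong convergence $e_\alpha f_\beta\to[I][J]$ for products of approximate units, but this is exactly joint strong continuity of multiplication on the unit ball; so I do not expect a real obstacle here.
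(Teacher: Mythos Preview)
Your proof is correct. The approach differs from the paper's, however. The paper argues representation-theoretically: by Lemma~\ref{lem:I_tu_explicit}, in any representation~\(\pi\) the projection~\([K]\) acts as the orthogonal projection onto \(\pi(K)\Hils\), so the two identities reduce to \(\pi(I)\Hils+\pi(J)\Hils=\pi(I+J)\Hils\) and \(\pi(I)\Hils\cap\pi(J)\Hils=\pi(I\cap J)\Hils\); the first is obvious, and for the second the non-trivial inclusion follows because \(\pi(I)\) and~\(\pi(J)\) each act nondegenerately on \(\pi(I)\Hils\cap\pi(J)\Hils\), whence this space is contained in \(\pi(I)\pi(J)\Hils=\pi(I\cap J)\Hils\). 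You instead work intrinsically in~\(\bid A\), using monotonicity of \(K\mapsto[K]\) for the easy inequalities and an approximate-unit argument for the hard one \([I][J]\le[I\cap J]\). Your route is self-contained and avoids invoking the universal representation, at the cost of the small analytic verification (joint strong continuity on bounded sets) that you flag; the paper's route is shorter once Lemma~\ref{lem:I_tu_explicit} is in hand and makes the geometric content \(\pi(I)\Hils\cap\pi(J)\Hils=\pi(I\cap J)\Hils\) explicit.
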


  \begin{proof}
    The supremum \([I]\vee [J]\)
    and \([I+J]\)
    act in any representation~\(\pi\colon A\to \Bound(\Hils)\)
    by the orthogonal projection onto
    \(\pi(I)\Hils+\pi(J)\Hils = \pi(I+J)\Hils\)
    (see also Lemma~\ref{lem:I_tu_explicit}), hence they are equal
    in~\(\bid{A}\).
    The assertion \([I]\cdot [J] = [I\cap J]\)
    is equivalent to
    \(\pi(I)\Hils\cap \pi(J)\Hils = \pi(I\cap J)\Hils\)
    for any representation~\(\pi\)
    of~\(A\).
    The inclusion~\(\supseteq\)
    is obvious, and~\(\subseteq\)
    follows because both \(\pi(I)\)
    and~\(\pi(J)\)
    act nondegenerately on \(\pi(I)\Hils\cap \pi(J)\Hils\),
    giving
    \(\pi(I)\Hils\cap \pi(J)\Hils \subseteq
    \pi(I)\pi(J)\Hils = \pi(I\cap J)\Hils\).
  \end{proof}

\begin{proposition}
  \label{pro:faithful_rep_abstract}
  Let~\(\mathcal{I}\)
  be a lattice of ideals in a \(\Cst\)\nb-algebra~\(A\)
  and let \(A_\mathcal{I}\subseteq \bid{A}\)
  be the \(\Cst\)\nb-subalgebra
  generated by~\(\Mult(I)\)
  for all \(I\in\mathcal{I}\).
  Let \(\pi\colon A\to\Bound(\Hils)\)
  be a representation.  The restriction of~\(\bid\pi\)
  to~\(A_\mathcal{I}\)
  is faithful if \(a\in I\) whenever \(I,J\in\mathcal{I}\) and
  \(a\in J\) satisfy \(I\subseteq J\)
  and \(\pi(a) \pi(J)\Hils\subseteq \pi(I)\Hils\).
\end{proposition}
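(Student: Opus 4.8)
The plan is to prove that \(\bid{\pi}\) restricts to an isometry on~\(A_\mathcal{I}\) (being an isometric \Star{}homomorphism it is then automatically injective), and the first move is to reduce to finitely generated sublattices.  The lattice of closed ideals of a \(\Cst\)\nb-algebra is distributive, being isomorphic to the lattice of open subsets of~\(\Prim(A)\), so the sublattice of~\(\mathcal I\) generated by a finite set of ideals is finite.  Every element of~\(A_\mathcal I\) is a norm-limit of polynomials in finitely many multipliers \(m_i\in\Mult(I_i)\) and their adjoints, and such a polynomial lies in the \(\Cst\)\nb-subalgebra \(A_\mathcal F\subseteq\bid{A}\) generated by \(\{\Mult(I)\mid I\in\mathcal F\}\), where \(\mathcal F\) is the (finite) sublattice generated by the~\(I_i\).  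Hence \(A_\mathcal I=\overline{\bigcup A_\mathcal F}\), the union over finite sublattices \(\mathcal F\subseteq\mathcal I\) (which we may always take to contain the zero ideal).  So it suffices to show that \(\bid{\pi}\) is isometric on each~\(A_\mathcal F\).

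Fix a finite sublattice~\(\mathcal F\).  By Lemma~\ref{lem:central_projection_algebra}, the support projections \([I]\in\bid{A}\) for \(I\in\mathcal F\) are central and \(I\mapsto[I]\) is a lattice homomorphism; hence they generate a finite Boolean algebra of central projections whose atoms \(q_1,\dots,q_m\) add up to \(q\defeq\bigvee_{I\in\mathcal F}[I]\).  A short computation with the lattice homomorphism \(I\mapsto[I]\) shows that each atom has the form \(q_j=[K_j]-[M_j]\) for ideals \(M_j\subseteq K_j\) in~\(\mathcal F\): one may take \(K_j=\bigcap\{I\in\mathcal F\mid q_j\le[I]\}\), so that \(\prod_{q_j\le[I]}[I]=[K_j]\), and \(M_j=K_j\cap\sum\{I\in\mathcal F\mid q_j\not\le[I]\}\), so that \(\prod_{q_j\not\le[I]}(1-[I])=1-[K_j\cap\sum(\dots)]\) after multiplying by \([K_j]\).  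Since \([I]\le q\) for all \(I\in\mathcal F\) we have \(qx=x\) for \(x\in A_\mathcal F\), and since the~\(q_j\) are pairwise orthogonal central projections, \(\norm{x}=\max_j\norm{q_jx}\) and \(\norm{\bid{\pi}(x)}=\max_j\norm{\bid{\pi}(q_jx)}\).  Thus it is enough to prove that \(\bid{\pi}\) is isometric on each \(\Cst\)\nb-algebra \(q_jA_\mathcal F\).

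To analyse the block \(q_jA_\mathcal F\) I would invoke the elementary fact that a multiplier of an ideal~\(I\) of~\(A\) carries every closed ideal of~\(I\) into itself (an approximate-unit argument).  Applied to \(K_j\subseteq I\) and \(m\in\Mult(I)\), this gives \([K_j]m\in\Mult(K_j)\subseteq\bid{A}\), and combined with \(q_j[I]=0\) for \(I\not\supseteq K_j\) it yields \(q_jA_\mathcal F=q_j\Mult(K_j)\).  Now \(q_j\bid{A}q_j=(1-[M_j])\bid{K_j}\) is canonically the bidual \(\bid{(K_j/M_j)}\); the map \(b\mapsto q_jb\) identifies \(q_jK_j\) with the quotient \(K_j/M_j\), and \(q_j\Mult(K_j)\) with a \(\Cst\)\nb-subalgebra of \(\Mult(K_j/M_j)\) containing \(K_j/M_j\), so that \(q_jK_j\cong K_j/M_j\) is an essential ideal of \(q_jA_\mathcal F\).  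Moreover \(\bid{\pi}\) restricts on \(q_j\bid{A}q_j=\bid{(K_j/M_j)}\) to the normal extension of the representation \(\pi_{K_j/M_j}\) of \(K_j/M_j\) given by \(a+M_j\mapsto\bid{\pi}(q_ja)\), and the hypothesis of the proposition, applied to the pair \(M_j\subseteq K_j\) in~\(\mathcal I\), is exactly the statement that \(\pi_{K_j/M_j}\) is faithful.  Since a representation that is faithful on an essential ideal is faithful, \(\bid{\pi}\) is faithful — hence isometric — on \(q_j\Mult(K_j)=q_jA_\mathcal F\), which completes the proof.

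The hard part I anticipate is the middle step: pinning down the atoms~\(q_j\) as differences \([K_j]-[M_j]\) of support projections of members of~\(\mathcal F\), and then recognising that compressing the generating multiplier algebras by the central projection~\(q_j\) collapses \(A_\mathcal F\) onto the single algebra \(q_j\Mult(K_j)\), which is squeezed between \(K_j/M_j\) and \(\Mult(K_j/M_j)\).  A caveat worth flagging is that \([K]m\) for \(m\in\Mult(I)\), \(K\subseteq I\), need not lie in~\(A\) itself, only in \(\Mult(K)\subseteq\bid{A}\), so the identity \(q_jA_\mathcal F=q_j\Mult(K_j)\) has to be read inside~\(\bid{A}\); and one should keep the standing convention that the zero ideal belongs to~\(\mathcal I\), which is what makes the hypothesis applicable to the atoms~\(q_j\) with \(M_j=0\).
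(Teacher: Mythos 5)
Your proposal is correct and follows essentially the same route as the paper: reduction to finite sublattices, an orthogonal decomposition of \(A_{\mathcal F}\) by central projections, and an application of the hypothesis to the pair \(M_j\subseteq K_j\) — indeed your Boolean-algebra atoms \(q_j=[K_j]-[M_j]\) are exactly the paper's projections \([J]-[J^\circ]\) for irreducible \(J\in\mathcal F\), with \(K_j\) irreducible and \(M_j=K_j^\circ\). The finish via ``faithful on the essential ideal \(q_jK_j\cong K_j/M_j\) implies faithful on \(q_j\Mult(K_j)\)'' is just a repackaging of the paper's direct computation with the elements \(xa\), \(a\in J\), and the zero-ideal convention you flag is likewise implicit in the paper's treatment of minimal irreducible elements.
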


\begin{proof}
  The \(\Cst\)\nb-algebra~\(A_\mathcal{I}\)
  is the inductive limit of the subalgebras~\(A_\mathcal{F}\)
  for finite sublattices \(\mathcal{F}\subseteq \mathcal{I}\).
  Hence a representation on~\(A_\mathcal{I}\)
  is faithful if and only if it is faithful on~\(A_\mathcal{F}\)
  for each finite sublattice~\(\mathcal{F}\).
  Hence we may assume without loss of generality that the
  lattice~\(\mathcal{I}\)
  is finite.  We do this from now on.

  For \(J\in\mathcal{I}\),
  let \(J^\circ = \sum_{I\in\mathcal{I}, I<J} I\).
  Call~\(J\)
  \emph{irreducible} if \(J\neq J^\circ\), that is, \(J\) is not
  a sum of strictly smaller ideals in~\(\mathcal{I}\).
  We claim that the summands~\(\Mult(J)\)
  for irreducible~\(J\)
  already generate~\(A_\mathcal{I}\).
  If \(I\le J\)
  in~\(\mathcal{I}\),
  then \(I\idealin \Mult(J)\),
  which gives a unital \Star{}homomorphism
  \(\rho_{IJ}\colon \Mult(J)\to \Mult(I)\)
  such that \(\rho_{IJ}(x) = x\cdot [I]\)
  in~\(\bid{A}\)
  for all \(x\in\Mult(J)\).
  If \(J=I_1+I_2\),
  then we rewrite \(x\in\Mult(J)\)
  using Lemma~\ref{lem:central_projection_algebra}:
  \[
  x = x[I_1] + x[I_2] - x[I_1\cap I_2]
  = \rho_{I_1 J}(x) + \rho_{I_2 J}(x)
  - \rho_{(I_1\cap I_2) J}(x).
  \]
  Since the right hand side lies in
  \(\Mult(I_1) + \Mult(I_2) + \Mult(I_1\cap I_2)\),
  the generators \(x\in \Mult(J)\)
  are redundant if \(J=I_1+I_2\)
  for \(I_1,I_2\in\mathcal{I}\)
  with \(I_1,I_2\neq J\).
  Since~\(\mathcal{I}\)
  is finite, any \(J\in\mathcal{I}\)
  is a finite sum of irreducible ideals in~\(\mathcal{I}\).
  Repeating the above decomposition, we see that~\(A_\mathcal{I}\)
  is generated by~\(\Mult(J)\) for irreducible \(J\in\mathcal{I}\).

  If~\(J\)
  is irreducible, then \(J^\circ< J\)
  is the maximal element of~\(\mathcal{I}\)
  below~\(J\),
  and we may decompose any \(x\in\Mult(J)\)
  as \(\rho_{J^\circ J}(x)[J^\circ] + x\cdot ([J]-[J^\circ])\).
  The first term in \(\Mult(J^\circ)\)
  may be further decomposed using irreducible elements
  of~\(\mathcal{I}\)
  contained in~\(J^\circ\).
  Thus we may replace the generators \(x\in\Mult(J)\)
  by \(x\cdot ([J]-[J^\circ])\) for irreducible \(J\in\mathcal{I}\).

  If \(I\cap J\neq I\),
  then \(I\cap J\le I^\circ\),
  so that \(([I]-[I^\circ])[J]=0\)
  and hence also \(([I]-[I^\circ])([J]-[J^\circ])=0\).
  By symmetry, the same happens if \(I\cap J\neq J\).
  Hence \(([I]-[I^\circ])([J]-[J^\circ])=0\)
  whenever \(I \neq J\).

  Thus~\(A_\mathcal{I}\)
  is the \emph{orthogonal} direct sum of \(\Mult(J)([J]-[J^\circ])\)
  for all irreducible \(J\in\mathcal{I}\).
  The representation~\(\bid{\pi}\)
  is faithful on~\(A_\mathcal{I}\)
  if and only if it is faithful on each summand
  \(\Mult(J)([J]-[J^\circ])\).
  Let \(x\in\Mult(J)\)
  satisfy \(\bid{\pi}(x[J]-x[J^\circ])=0\),
  that is, \(\bid{\pi}(x[J]) = \bid{\pi}(x[J^\circ])\).
  Then \(\bid{\pi}(x)\pi(J)\Hils \subseteq \pi(J^\circ)\Hils\)
  and hence \(\pi(xa)\pi(J)\Hils \subseteq \pi(J^\circ)\Hils\)
  for each \(a\in J\).
  Since \(J,J^\circ\in\mathcal{I}\),
  the assumption in our proposition gives \(xa\in J^\circ\)
  for all \(a\in J\).
  Then \(\rho(xa)\rho(J)\Hils_\rho \subseteq \rho(J^\circ)\Hils_\rho\)
  for each representation~\(\rho\)  of~\(A\) and each \(a\in J\),
  giving \(\bid{\rho}(x)\rho(J)\Hils_\rho \subseteq \rho(J^\circ)\Hils_\rho\)
  for each representation~\(\rho\)
  and hence \(x[J] = x[J^\circ]\).
  Thus~\(\bid{\pi}\) is faithful on the summands
  \(\Mult(J)([J]-[J^\circ])\) and hence on~\(A_{\mathcal{I}}\).
\end{proof}

\begin{theorem}
  \label{the:E-faithful}
  Let~\(A\)
  be a \(\Cst\)\nb-algebra
  and let~\(S\)
  be a unital inverse semigroup acting on~\(A\)
  by Hilbert bimodules~\((\Hilm_t)_{t\in S}\).  Let~\(\mathcal{I}\)
  be a lattice of ideals in~\(A\)
  that contains the ideals~\(I_{1,t}\)
  for all \(t\in S\).
  Let \(\pi\colon A\to\Bound(\Hils)\)
  be a representation of~\(A\)
  such that, for all \(I,J\in\mathcal{I}\)
  and \(a\in J\)
  with \(I\subseteq J\)
  and \(\pi(a) \pi(J)\Hils\subseteq \pi(I)\Hils\),
  already \(a\in I\).
  Then~\(\pi\)
  is \(E\)\nb-faithful,
  so \(\Ind\pi\) is a faithful representation of~\(A\rtimes_\red S\).
\end{theorem}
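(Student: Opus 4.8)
The plan is to realise~\(\tilde A\) as a \(\Cst\)\nb-subalgebra of the algebra~\(A_\mathcal{I}\) from Proposition~\ref{pro:faithful_rep_abstract} and then chain together Propositions~\ref{pro:faithful_rep_abstract} and~\ref{pro:E-faithful}. By Definition~\ref{def:faithful_for_E} and Proposition~\ref{pro:E-faithful}, it is enough to prove that the restriction \(\tilde\pi = \bid\pi|_{\tilde A}\) is faithful; this gives that the one-element family~\(\{\pi\}\) is \(E\)\nb-faithful and hence that \(\Ind\pi\) is a faithful representation of~\(A\rtimes_\red S\).

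First I would show \(\tilde A\subseteq A_\mathcal{I}\), where \(A_\mathcal{I}\subseteq\bid A\) denotes the \(\Cst\)\nb-subalgebra generated by \(\Mult(I)\) for \(I\in\mathcal{I}\). By Lemma~\ref{lem:description-E}, the weak conditional expectation \(E\colon A\rtimes_\red S\to\bid A\) maps the copy of~\(\Hilm_t\) inside \(A\rtimes_\red S\) into \(\Mult(I_{1,t})\), and since \(I_{1,t}\in\mathcal{I}\) by hypothesis, this is contained in~\(A_\mathcal{I}\). The \Star{}algebra \(A\rtimes_\alg S\) is the linear span of these copies of the~\(\Hilm_t\), so \(E(A\rtimes_\alg S)\subseteq A_\mathcal{I}\). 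As \(E\) is contractive (being the restriction of the conditional expectation \(\bid A\barrtimes S\to\bid A\)), \(A\rtimes_\alg S\) is dense in \(A\rtimes_\red S\), and \(A_\mathcal{I}\) is norm-closed, we conclude \(E(A\rtimes_\red S)\subseteq A_\mathcal{I}\) and hence \(\tilde A\subseteq A_\mathcal{I}\). (Note \(I_{1,1}=\s(\Hilm_1)=A\) lies in~\(\mathcal{I}\), so \(A\subseteq\Mult(A)\subseteq A_\mathcal{I}\), consistent with \(A\subseteq\tilde A\).)

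The hypothesis imposed on~\(\pi\) in the theorem is precisely the condition appearing in Proposition~\ref{pro:faithful_rep_abstract}, so that proposition gives that \(\bid\pi\) restricts to a faithful representation of~\(A_\mathcal{I}\). Restricting a faithful \Star{}homomorphism to a \(\Cst\)\nb-subalgebra keeps it faithful, so \(\tilde\pi = \bid\pi|_{\tilde A}\) is faithful; thus \(\pi\) is \(E\)\nb-faithful, and Proposition~\ref{pro:E-faithful} yields that \(\Ind\pi\) is a faithful representation of~\(A\rtimes_\red S\).

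This is mostly a repackaging of the earlier results, so there is no serious obstacle; the only points that require attention are the inclusion \(\tilde A\subseteq A_\mathcal{I}\) — which rests on the concrete description of~\(E\) in Lemma~\ref{lem:description-E} together with contractivity of~\(E\) — and checking that the lattice~\(\mathcal{I}\) contains all the relevant ideals \(I_{1,t}\), which is exactly the standing hypothesis. The genuine content lives in Proposition~\ref{pro:faithful_rep_abstract}, whose proof decomposes \(A_\mathcal{I}\) into an orthogonal direct sum over the irreducible ideals of~\(\mathcal{I}\).
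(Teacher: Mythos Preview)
Your proof is correct and follows essentially the same route as the paper: use Lemma~\ref{lem:description-E} to see that \(E(\Hilm_t)\subseteq\Mult(I_{1,t})\subseteq A_{\mathcal I}\), hence \(\tilde A\subseteq A_{\mathcal I}\), then invoke Proposition~\ref{pro:faithful_rep_abstract} and Proposition~\ref{pro:E-faithful}. The paper's proof is just a three-sentence compression of exactly this argument; your additional remarks on contractivity of~\(E\) and density of \(A\rtimes_\alg S\) simply make explicit what the paper takes for granted.
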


\begin{proof}
  Lemma~\ref{lem:description-E} shows that \(E(\Hilm_t)\) is contained
  in~\(\Mult(I_{1,t})\) for all \(t\in S\).  Thus~\(\tilde A\) is
  contained in~\(A_{\mathcal{I}}\).
  Proposition~\ref{pro:faithful_rep_abstract} shows that~\(\pi\) is
  \(E\)\nb-faithful.  Hence \(\Ind\pi\) is faithful by
  Proposition~\ref{pro:E-faithful}.
\end{proof}

\begin{theorem}
  \label{the:pure_not_matter}
  The family of all irreducible representations of~\(A\)
  is \(E\)\nb-faithful.
\end{theorem}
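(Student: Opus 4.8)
The plan is to derive this from Theorem~\ref{the:E-faithful}. As noted in the proof of Proposition~\ref{pro:E-faithful}, a family of representations is \(E\)\nb-faithful exactly when the single representation obtained as its direct sum is \(E\)\nb-faithful (since the normal extension of a direct sum is the direct sum of the normal extensions, so that \(\widetilde{\bigoplus_i\pi_i}=\bigoplus_i\tilde{\pi}_i\) on \(\tilde{A}\)). Hence it suffices to verify the hypothesis of Theorem~\ref{the:E-faithful} for the single representation \(\pi\defeq\bigoplus_\rho \rho\), where~\(\rho\) ranges over a set of representatives for the unitary equivalence classes of irreducible representations of~\(A\). I would apply Theorem~\ref{the:E-faithful} with~\(\mathcal{I}\) taken to be the lattice of \emph{all} (closed, two-sided) ideals of~\(A\), which certainly contains every~\(I_{1,t}\). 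The theorem then reduces to the purely \(\Cst\)\nb-algebraic claim: whenever \(I\subseteq J\) are ideals of~\(A\) and \(a\in J\) satisfies \(\pi(a)\pi(J)\Hils\subseteq\pi(I)\Hils\), then \(a\in I\).

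To prove this claim I would use two standard facts: an element of a \(\Cst\)\nb-algebra vanishes if and only if every irreducible representation kills it, and the irreducible representations of~\(A\) that vanish on~\(I\) are precisely the pullbacks along the quotient map \(A\onto A/I\) of the irreducible representations of~\(A/I\). Consequently \(a\in I\) is equivalent to \(\rho(a)=0\) for every irreducible representation~\(\rho\) of~\(A\) with \(\rho(I)=0\). Fix such a~\(\rho\). Since~\(\rho\) appears (up to unitary equivalence) as a direct summand of~\(\pi\), the hypothesis specialises to \(\rho(a)\,\rho(J)\Hils_\rho\subseteq\rho(I)\Hils_\rho=\{0\}\).

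Now I would split into two cases. If \(\rho(J)=\{0\}\), then \(\rho(a)=0\) already because \(a\in J\). If \(\rho(J)\neq\{0\}\), then \(\cspn\rho(J)\Hils_\rho\) is a nonzero closed \(\rho(A)\)\nb-invariant subspace of~\(\Hils_\rho\) (invariant because~\(J\) is an ideal), hence equals~\(\Hils_\rho\) by irreducibility of~\(\rho\); as \(\rho(a)\) annihilates \(\rho(J)\Hils_\rho\), it annihilates all of~\(\Hils_\rho\), so \(\rho(a)=0\). In either case \(\rho(a)=0\), which yields \(a\in I\) and completes the verification; Theorem~\ref{the:E-faithful} then gives that~\(\pi\), and hence the family of all irreducible representations, is \(E\)\nb-faithful. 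The only mildly delicate ingredient is the elementary fact that an ideal acts nondegenerately in an irreducible representation unless it acts by zero; everything else is bookkeeping, so I do not anticipate a genuine obstacle.
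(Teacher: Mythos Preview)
Your proposal is correct and follows essentially the same approach as the paper: both apply Theorem~\ref{the:E-faithful} to the direct sum of all irreducible representations with~\(\mathcal{I}\) taken to be the lattice of all ideals, and both exploit that an irreducible representation either annihilates an ideal or acts nondegenerately on it. The only cosmetic difference is that the paper argues by contrapositive (if \(a\notin I\), exhibit an irreducible~\(\rho\) witnessing the failure), while you argue directly that every irreducible~\(\rho\) killing~\(I\) also kills~\(a\).
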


\begin{proof}
  We apply Theorem~\ref{the:E-faithful} to the direct sum~\(\Pi\)
  of all irreducible representations and the lattice of all ideals
  of~\(A\).  Let
  \(I\idealin J\idealin A\)
  be ideals and \(a\in J\).
  If \(a\notin I\),
  then there is an irreducible representation
  \(\pi\colon J/I\to\Bound(\Hils_\pi)\)
  of~\(J/I\)
  with \(\pi(a)\neq0\).
  This irreducible representation extends to an irreducible
  representation of~\(A\),
  again denoted~\(\pi\).
  Since \(\pi|_J\)
  is irreducible and \(\pi|_I=0\),
  we get \(\pi(J)\Hils_\pi=\Hils_\pi\)
  and \(\pi(I)\Hils_\pi=0\),
  so \(\pi(a)\pi(J)\Hils_\pi\)
  is not contained in~\(\pi(I)\Hils_\pi\)
  because \(\pi(a)\neq0\).
  Since~\(\pi\)
  is a direct summand in~\(\Pi\),
  the same happens for~\(\Pi\).
  Hence~\(\Pi\)
  verifies the assumption in Theorem~\ref{the:E-faithful}.
\end{proof}

Theorem~\ref{the:pure_not_matter} shows that the irreducible
representations of~\(A\) already give a faithful representation
of~\(\tilde{A}\) and hence of~\(A\rtimes_\red S\) by induction.
In~\cite{Exel:noncomm.cartan} Exel defines~\(A\rtimes_\red S\) by
inducing only irreducible representations of~\(A\).  More precisely,
Exel constructs a positive linear functional~\(\tilde\varphi\)
on~\(A\rtimes S\) from every pure state~\(\varphi\) of~\(A\) and
defines \(A\rtimes_\red S\) as the image of~\(A\rtimes S\) in the
direct sum of the GNS-representations of~\(\tilde\varphi\) for all
pure states~\(\varphi\).  The induced functional
\(\tilde\varphi\colon A\rtimes S\to\C\) is computed
in~\cite{Exel:noncomm.cartan}*{Proposition~7.4} as follows.  Let
\(t\in S\) and
\(\xi\in\Hils_t\).  First assume that~\(\varphi\) is supported
on~\(\s(\Hils_e)\) for some \(e\le 1,t\) (a linear functional
\(\varphi\in A'\) is \emph{supported on an ideal} \(I\idealin
A\) if \(\varphi(a) = \lim_i \varphi(a u_i)\) for all \(a\in A\) and
any approximate unit~\((u_i)\) of~\(I\),
see~\cite{Exel:noncomm.cartan}).  This is equivalent to the
existence of a linear functional \(\psi\in I'\) and \(b\in I\) with
\(\varphi(a)=\psi(ab)\) for all \(a\in A\) by
\cite{Exel:noncomm.cartan}*{Proposition~5.1}.  If~\(\varphi\) is
supported on~\(\s(\Hils_e)\) for some \(e\le 1,t\), then
\(\tilde\varphi(\xi\delta_t) \defeq \lim_i \varphi(\theta_{1,t}^e(\xi
u_{e,i}))\), where~\((u_{e,i})_{i\in I}\) is an approximate unit of
\(\s(\Hils_e)=I_{1,e}\) and~\(\theta_{1,t}^e\) is the
isomorphism \(\Hils_t \s(\Hils_e) \congto
\Hils_1\s(\Hils_e)=I_{1,e}\) in~\eqref{eq:Def-thetas}.
If~\(\varphi\) is not supported on~\(\s(\Hils_e)\) for
any \(e\le 1,t\), then \(\tilde\varphi(\xi\delta_t) \defeq 0\).  The
above definition of~\(\tilde\varphi\) is only reasonable
if~\(\varphi\) is pure, and~\cite{Exel:noncomm.cartan} only
considers pure states in the definition of the reduced crossed
product.

The following proposition shows that this induced
state~\(\tilde\varphi\) coincides with \(\bid\varphi\circ E\), where
\(\bid\varphi\colon \bid A\to \C\) denotes the unique normal
extension of~\(\varphi\).  Hence the GNS-representation
of~\(\tilde\varphi\) is the induced representation of the
GNS-representation of~\(\varphi\) for any pure state~\(\varphi\),
and so our reduced crossed product is the same one as
in~\cite{Exel:noncomm.cartan} by Theorem~\ref{the:pure_not_matter}.

\begin{proposition}
  Let \((\Hils_t)_{t\in S}\) be an action of~\(S\) on a
  \(\Cst\)\nb-algebra \(A\) by Hilbert bimodules.  Let \(\varphi\in
  A'\) be a bounded linear functional on~\(A\) and let \(t\in S\),
  \(\xi\in\Hilm_t\).  Let~\((u_{t,i})\) be an approximate unit for
  the ideal~\(I_{1,t}\).  Then
  \begin{equation}
    \label{eq:formula-Ind-functional}
    \bid{\varphi}\circ E(\xi\delta_t)
    = \lim_i\varphi(\theta_{1,t}(\xi u_{t,i})),
  \end{equation}
  If~\(\varphi\) is supported on \(\s(\Hils_e)=I_{1,e}\) for some
  \(e\le 1,t\) and~\((u_{e,i})\) is an approximate unit
  for~\(I_{1,e}\), then \(\bid{\varphi}\circ E(\xi\delta_t) = \lim_i
  \varphi(\theta_{1,t}^e(\xi u_{e,i}))\).  If~\(\varphi\) is pure, then
  \(\bid\varphi\circ E=\tilde\varphi\).
\end{proposition}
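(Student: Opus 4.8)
The plan is to prove the three assertions in turn, treating \eqref{eq:formula-Ind-functional} and the ``supported'' formula as essentially topological consequences of Lemma~\ref{lem:description-E} and reserving the real work for the comparison with Exel's functional~$\tilde\varphi$. For \eqref{eq:formula-Ind-functional}: by construction $\bid\varphi$ is $\sigma$\nb-weakly continuous on~$\bid A$, and by Lemma~\ref{lem:description-E} the element $E(\xi\delta_t)$ is the strict limit in $\Mult(I_{1,t})$ of the norm-bounded net $(\theta_{1,t}(\xi u_{t,i}))_i$. Since the inclusion $\Mult(I_{1,t})\hookrightarrow\bid{I_{1,t}}\hookrightarrow\bid A$ is continuous from the strict to the $\sigma$\nb-weak topology --- as recorded in the proof of Lemma~\ref{lem:description-E} --- this net converges $\sigma$\nb-weakly to $E(\xi\delta_t)$ in~$\bid A$, and since $\bid\varphi$ agrees with~$\varphi$ on $\theta_{1,t}(\xi u_{t,i})\in I_{1,t}\subseteq A$, applying $\bid\varphi$ yields \eqref{eq:formula-Ind-functional}.

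For the supported case: the hypothesis that $\varphi$ is supported on $\s(\Hils_e)=I_{1,e}$ says $\varphi(a)=\lim_i\varphi(au_{e,i})$ for all $a\in A$, which, as both sides extend $\sigma$\nb-weakly continuously to~$\bid A$, is equivalent to $\bid\varphi(x)=\bid\varphi(x[I_{1,e}])$ for all $x\in\bid A$. I apply this to $x=E(\xi\delta_t)$. By Lemma~\ref{lem:description-E} we have $E(\xi\delta_t)=\Theta_{1,t}(\xi[I_{1,t}])$ for the partial isometry $\Theta_{1,t}$ of Lemma~\ref{lem:intersect_Hilm_tu_self-dual}; since $\Theta_{1,t}$ is a right $\bid A$\nb-module map, $[I_{1,e}]$ is central in~$\bid A$, and $[I_{1,e}]\le[I_{1,t}]$ (because $\s(\Hils_e)\subseteq I_{1,t}$), this gives $E(\xi\delta_t)[I_{1,e}]=\Theta_{1,t}(\xi[I_{1,e}])$. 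On the weakly closed submodule $\bid{\Hils_t}\cdot[I_{1,e}]=\bid{(\Hils_t\cdot\s(\Hils_e))}$ the operator $\Theta_{1,t}$ coincides, by Lemma~\ref{lem:intersect_Hilm_tu_self-dual}, with the $\sigma$\nb-weakly continuous extension of $\theta^e_{1,t}\colon\Hils_t\cdot\s(\Hils_e)\congto\s(\Hils_e)$. As $\xi u_{e,i}\to\xi[I_{1,e}]$ $\sigma$\nb-weakly in~$\bid{\Hils_t}$ (the module action being separately $\sigma$\nb-weakly continuous on bounded sets), applying this extension and then the normal functional~$\bid\varphi$ gives $\bid\varphi\circ E(\xi\delta_t)=\bid\varphi(E(\xi\delta_t)[I_{1,e}])=\lim_i\varphi(\theta^e_{1,t}(\xi u_{e,i}))$.

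For pure~$\varphi$: let $(\pi_\varphi,\Hils_\varphi,\Omega)$ be its GNS triple, so $\pi_\varphi$ is irreducible. Since $[I_{1,t}]$ is a central projection of~$\bid A$, its image under $\bid{\pi_\varphi}$ is the projection onto the $\pi_\varphi$\nb-invariant subspace $\overline{\pi_\varphi(I_{1,t})\Hils_\varphi}$, hence is $0$ or~$1$, so $\bid\varphi([I_{1,t}])\in\{0,1\}$. If $\bid\varphi([I_{1,t}])=0$, then $E(\xi\delta_t)=[I_{1,t}]E(\xi\delta_t)$ because $E(\xi\delta_t)\in\Mult(I_{1,t})\subseteq\bid{I_{1,t}}$, so Cauchy--Schwarz for the state~$\bid\varphi$ forces $\bid\varphi\circ E(\xi\delta_t)=0$; moreover no $e\le1,t$ can have $\varphi$ supported on $\s(\Hils_e)$ (that would give $\bid\varphi([I_{1,t}])\ge\bid\varphi([\s(\Hils_e)])=1$), so $\tilde\varphi(\xi\delta_t)=0$ by definition, and both sides vanish. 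If $\bid\varphi([I_{1,t}])=1$, then $\pi_\varphi(I_{1,t})\Hils_\varphi=\overline{\sum_{v\le1,t}\pi_\varphi(\s(\Hils_v))\Hils_\varphi}$ is dense; each $\overline{\pi_\varphi(\s(\Hils_v))\Hils_\varphi}$ is $\pi_\varphi$\nb-invariant, hence $0$ or~$\Hils_\varphi$, and they cannot all be~$0$, so $\varphi$ is supported on $\s(\Hils_v)$ for some $v\le1,t$. Such a~$v$ is idempotent, so the supported case above applies with $e=v$ and, by the definition of~$\tilde\varphi$, gives $\bid\varphi\circ E(\xi\delta_t)=\lim_i\varphi(\theta^v_{1,t}(\xi u_{v,i}))=\tilde\varphi(\xi\delta_t)$. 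In both cases $\bid\varphi\circ E$ and $\tilde\varphi$ agree on every $\xi\delta_t$; since both are bounded linear functionals on~$A\rtimes S$ and the subspaces $\Hils_t\delta_t$ span a dense subalgebra, $\bid\varphi\circ E=\tilde\varphi$.

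The first two parts are bookkeeping with the $\sigma$\nb-weak and strict topologies, for which Lemmas~\ref{lem:description-E} and~\ref{lem:intersect_Hilm_tu_self-dual} already do most of the work. The delicate step is the case split in the third part: $\tilde\varphi$ is defined through the dichotomy ``is $\varphi$ supported on some $\s(\Hils_e)$ with $e\le1,t$?'', whereas $\bid\varphi\circ E$ is a single uniform formula, and reconciling the two relies on purity to collapse the central projection $[I_{1,t}]=\bigvee_{v\le1,t}[\s(\Hils_v)]$ to a scalar and to force a non-zero value of this supremum to be attained by a single~$v$. This is exactly where irreducibility of the GNS representation is indispensable.
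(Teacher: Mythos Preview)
Your proof is correct and follows essentially the same line as the paper's. The paper derives \eqref{eq:formula-Ind-functional} in one line from~\eqref{eq:formula-cond.exp} and strict continuity of bounded functionals, exactly as you do. For the supported case the paper uses the factorisation \(\varphi(a)=\psi(ab)\) with \(b\in I_{1,e}\) (from \cite{Exel:noncomm.cartan}*{Proposition~5.1}) and the identity \(\theta_{1,t}(\xi b)=\theta_{1,t}^e(\xi b)\), which is a slightly slicker route than your \(\sigma\)\nb-weak computation with \([I_{1,e}]\), but the content is the same: both arguments reduce to the fact that~\(\Theta_{1,t}\) restricts to~\(\theta_{1,t}^e\) on \(\Hilm_t\cdot\s(\Hilm_e)\). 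For the pure case the paper simply cites the supported/vanishing dichotomy from \cite{Exel:noncomm.cartan}*{Proposition~5.5}, whereas you reprove it via irreducibility of the GNS representation and the scalar value of \(\bid{\pi_\varphi}([I_{1,t}])\); your argument is self-contained but otherwise identical in spirit.
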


\begin{proof}
  Formula~\eqref{eq:formula-Ind-functional} follows
  from~\eqref{eq:formula-cond.exp} because bounded linear
  functionals on \(\Cst\)\nb-algebras are strictly continuous.
  Assume that~\(\varphi\) is supported on one of the ideals
  \(I_{1,e}=\s(\Hils_e)\) for \(e\le 1,t\) that generate~\(I_{1,t}\).
  So there is \(\psi\in I_{1,e}'\) and \(b\in I_{1,e}\) with
  \(\phi(a)=\psi(ab)\) for all \(a\in A\).
  Then~\eqref{eq:formula-Ind-functional} implies
  \[
  \bid\varphi\circ E(\xi \delta_t)
  = \psi(\theta_{1,t}(\xi b))
  = \psi(\theta_{1,t}^e(\xi b))
  = \lim_i\varphi(\theta_{1,t}^e(\xi u_{e,i})).
  \]
  If~\(\varphi\) is pure, then either it is supported on an ideal or
  it vanishes on this ideal (see
  \cite{Exel:noncomm.cartan}*{Proposition~5.5}).  Therefore,
  if~\(\varphi\) is not supported on any of the ideals~\(I_{1,e}\)
  with \(e\le 1,t\), then it vanishes on \(I_{1,t} = \overline{\sum
    I_{1,e}}\).  Then \(\bid\varphi\circ E(\xi \delta_t)=0\).
\end{proof}

\section{Iterated crossed products}
\label{sec:iterated_crossed}

We now study reduced crossed products associated to actions of inverse
semigroups on groupoids.  Let~\(G\)
be a locally compact, locally Hausdorff groupoid with Haar system.
Let~\(S\)
be a unital inverse semigroup acting on~\(G\)
by partial equivalences.  Let~\(G\rtimes S\)
be the transformation groupoid as defined
in~\cite{Buss-Meyer:Actions_groupoids}.  This comes with a canonical
\(S\)\nb-grading,
that is, with open subsets \(G_t\subseteq G\rtimes S\)
for \(t\in S\) that satisfy
\[
G_t\cdot G_u = G_{tu},\quad
G_t^{-1} = G_{t^*},\quad
G_t\cap G_u = \bigcup_{v\le t,u} G_v,\quad
(G\rtimes S)^1=\bigcup_{t\in S} G_t.
\]
We have \(G\cong G_1\),
so~\(G\) is an open subgroupoid of~\(G\rtimes S\).

For instance, if~\(G\) is a space, then an action of~\(S\) on~\(G\) by
partial equivalences is essentially the same as an action on the
space~\(G\) by partial homeomorphisms, and the transformation groupoid
is the usual one.  If~\(S\) is a group, then an action of~\(G\) by
topological groupoid automorphisms is an action by equivalences, and
\(G\rtimes S\) is the semidirect product groupoid.  More generally,
any group extension \(H\into L\onto G\) comes from an action by
equivalences with \(L = H\rtimes G\).  These and several other
examples are explained in~\cite{Buss-Meyer:Actions_groupoids}.
Recently, we have developed a similar iterated crossed product theorem
for actions of possibly non-étale groupoids instead of inverse
semigroup actions in~\cite{Buss-Meyer:Groupoid_fibrations}.  In this
context, we also give several more examples that come from inverse
semigroup actions.  In particular, we observe that actions of a
discrete group~\(G\) on groupoids by equivalences are equivalent to
``strongly surjective'' \(G\)\nb-valued cocycles.

Let~\(\B\)
be a Fell bundle over~\(G\rtimes S\).
Since~\(G\)
is a subgroupoid of~\(G\rtimes S\),
we may restrict~\(\B\)
to a Fell bundle over~\(G\),
which we still call~\(\B\).
Let \(\Cst(G\rtimes S,\B)\)
and \(\Cst(G,\B)\)
be the full section \(\Cst\)\nb-algebras
of these Fell bundles.  Let \(\Cred(G\rtimes S,\B)\)
and \(\Cred(G,\B)\)
be the reduced section \(\Cst\)\nb-algebras;
these are
defined as follows.  There is a canonical (``left regular'')
representation~\(\Lambda_x\)
of~\(\Cst(G,\B)\)
on the Hilbert \(\B_x\)\nb-module
\(L^2(G_x,\B)\)
for each \(x\in G^0\),
and \(\Cred(G,\B)\)
is the image of~\(\Cst(G,\B)\)
in the representation
\(\Lambda\defeq \bigoplus_{x\in G^0} \Lambda_x\).

The \(S\)\nb-action
on~\(G\)
induces an \(S\)\nb-action
on~\(\Cst(G,\B)\)
by Hilbert bimodules~\((\Hilm_t)_{t\in S}\),
that is, a Fell bundle over~\(S\)
with unit fibre~\(\Cst(G,\B)\),
see~\cite{Buss-Meyer:Actions_groupoids}.  The Hilbert bimodule
\(\Hilm_t\defeq \Cst(G_t,\B)\)
is a completion of~\(\Sect(G_t,\B)\),
the space of quasi-continuous sections of~\(\B\)
over~\(G_t\)
with compact support, where quasi-continuity means finite linear
combinations of compactly supported functions on Hausdorff, open
subsets, extended by~\(0\)
outside their domain.  The space~\(\Sect(G_t,\B)\)
is a full pre-Hilbert
\(\Sect(G_{tt^*},\B)\)-\(\Sect(G_{t^*t},\B)\)-bimodule,
and \(\Cst(G_t,\B)\)
is its completion to a full Hilbert
\(\Cst(G_{tt^*},\B)\)-\(\Cst(G_{t^*t},\B)\)-bimodule.
Here we assume the expected results for Morita--Rieffel equivalence of
Fell bundle crossed products to hold in the case at hand; this is so
far only proved for Fell bundles over Hausdorff groupoids and for
special Fell bundles (such as actions by automorphisms twisted by
scalar-valued cocycles)
over non-Hausdorff groupoids.
The two issues are the positivity of the inner product and the
existence of the left action, which at first is only defined on a
dense subspace.  The Disintegration Theorem, if it holds in the case
at hand, implies both claims as in \cite{Renault:Representations} or
\cite{Holkar:Thesis}*{Sections 2.2--2.3}.

The reduced version of the Fell bundle is much easier to construct,
using the following observation:

\begin{lemma}
  \label{lem:reduced_embed}
  The inclusion map \(\Sect(G,\B)\to\Sect(G\rtimes S,\B)\)
  extends to a faithful, nondegenerate \Star{}homomorphism
  \(\Cred(G,\B) \to \Cred(G\rtimes S,\B)\).
\end{lemma}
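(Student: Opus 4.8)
The plan is to produce the map concretely at the level of the dense subalgebras and then check it extends to the reduced completions. First I would observe that the inclusion $G \cong G_1 \hookrightarrow G\rtimes S$ identifies $\Sect(G,\B)$ with the sections in $\Sect(G\rtimes S,\B)$ supported on the open subgroupoid $G_1$; this is an embedding of $^*$-algebras because $G_1$ is open (so extension by zero is multiplicative and $^*$-preserving). Thus there is an obvious $^*$-homomorphism $\Sect(G,\B)\to\Sect(G\rtimes S,\B)$, and the real content is that it is bounded for the two reduced norms and has the claimed properties.

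The key step is to compare regular representations. For $x\in G^0$ the regular representation $\Lambda_x$ of $\Cst(G,\B)$ acts on $L^2(G_x,\B)$, and the regular representation $\Lambda'_x$ of $\Cst(G\rtimes S,\B)$ acts on $L^2((G\rtimes S)_x,\B)$. Since $G$ is an open subgroupoid, $G_x$ is an open subset of $(G\rtimes S)_x$, so $L^2(G_x,\B)$ embeds isometrically as a (closed, $\B_x$-complemented) Hilbert submodule of $L^2((G\rtimes S)_x,\B)$. I would check that for $f\in\Sect(G,\B)$ this submodule is invariant under $\Lambda'_x(f)$ and that $\Lambda'_x(f)$ restricted to $L^2(G_x,\B)$ is exactly $\Lambda_x(f)$; this is a direct computation with the convolution formula, using that a section supported on $G_1$ convolved with a section supported in $G_x\subseteq(G\rtimes S)_x$ stays supported in $G_x$ (because $G_1\cdot G_1 = G_1$ and more generally the grouping of arrows respects the grading). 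Consequently $\norm{\Lambda_x(f)}\le\norm{\Lambda'_x(f)}$, and taking the supremum over $x\in G^0$ gives $\norm{f}_{\Cred(G,\B)}\le\norm{f}_{\Cred(G\rtimes S,\B)}$. This yields a well-defined, contractive $^*$-homomorphism $\Cred(G,\B)\to\Cred(G\rtimes S,\B)$.

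For faithfulness I would argue in the reverse direction: the compression of $\Lambda'_x$ to the submodule $L^2(G_x,\B)$ is $\Lambda_x$, so $\norm{\Lambda_x(f)}\le\norm{f}_{\Cred(G\rtimes S,\B)}$ already shown, but also if $\Lambda'_x(f)=0$ for all $x$ then in particular its compression $\Lambda_x(f)=0$ for all $x$, i.e.\ $f$ vanishes in $\Cred(G,\B)$; hence the reduced norm of $f$ coming from $G\rtimes S$ is bounded \emph{above} by a multiple of --- in fact equals --- the one coming from $G$. Making this precise: for $f\in\Sect(G,\B)$, $\norm{f}_{\Cred(G\rtimes S,\B)} = \sup_x\norm{\Lambda'_x(f)}$, and since $\Lambda'_x(f)$ leaves $L^2(G_x,\B)$ and (using that $f$ is supported on $G_1$ and the grading identity $G_t\cap G_u=\bigcup_{v\le t,u}G_v$) also leaves the orthogonal complement invariant, with $\Lambda'_x(f)$ acting as $0$ on sections supported away from $G_x$ --- here I need that $\Lambda'_x(f)\xi$ for $\xi$ supported on a coset $G_t\cap(G\rtimes S)_x$ with $t\not\le 1$ lands back in the same piece --- one gets $\norm{\Lambda'_x(f)} = \norm{\Lambda_x(f)}$. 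Taking the sup over $x$ gives equality of the two reduced norms on $\Sect(G,\B)$, so the extension is isometric, hence faithful. Nondegeneracy follows because $\Sect(G,\B)$ already acts nondegenerately on each $L^2(G_x,\B)$ (approximate units), and these submodules span a nondegenerate part; alternatively, an approximate unit of $\Cred(G,\B)$ built from $\Sect(G,\B)$ remains an approximate unit in $\Cred(G\rtimes S,\B)$ for the image.

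The main obstacle I anticipate is the non-Hausdorff, merely locally compact setting: one must be careful that the submodule $L^2(G_x,\B)\subseteq L^2((G\rtimes S)_x,\B)$ really is $\B_x$-complemented and that the convolution computations identifying $\Lambda'_x(f)|_{L^2(G_x,\B)}$ with $\Lambda_x(f)$ are legitimate for quasi-continuous (not genuinely continuous) sections, extended by zero across the non-Hausdorff overlaps. The decomposition of $(G\rtimes S)_x$ into the pieces indexed by the grading and the verification that $\Lambda'_x(f)$ is block-diagonal with respect to this decomposition --- which is what forces $\norm{\Lambda'_x(f)}=\norm{\Lambda_x(f)}$ rather than just an inequality --- is the delicate point and should be done using the local structure: over a Hausdorff open bisection the computation is the classical one, and one patches.
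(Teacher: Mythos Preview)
Your argument has the two inequalities swapped. The compression inequality \(\norm{\Lambda_x(f)}\le\norm{\Lambda'_x(f)}\) and the resulting \(\norm{f}_{\Cred(G,\B)}\le\norm{f}_{\Cred(G\rtimes S,\B)}\) give \emph{faithfulness} of the map (once it exists), not well-definedness: to extend the \Star{}homomorphism from \(\Sect(G,\B)\) to \(\Cred(G,\B)\) you need the opposite bound \(\norm{f}_{\Cred(G\rtimes S,\B)}\le\norm{f}_{\Cred(G,\B)}\). You attempt this in the second paragraph, but the claim that \(\Lambda'_x(f)\) ``acts as \(0\) on sections supported away from \(G_x\)'' is false. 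Take any \(\xi\) supported on \((G_t)_x\) with \(t\not\le 1\) and \(f\in\Sect(G,\B)\): then \(f*\xi\) is supported on \(G_1\cdot G_t = G_t\), which is the \emph{same} block, not zero. Block-diagonality alone only gives \(\norm{\Lambda'_x(f)}=\max\bigl(\norm{\Lambda_x(f)},\norm{\Lambda'_x(f)|_{L^2(G_x,\B)^\perp}}\bigr)\), and you have no control over the second term.

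The missing idea, which the paper supplies, is to identify the action on each block. The \(G\)\nb-orbits in \((G\rtimes S)_x\) are indexed by equivalence classes \(h\) in \(S_x/{\sim_x}\) (where \(S_x=\{t\in S:x\in\s(G_t)\}\) and \(t\sim_x u\) iff \(v\le t,u\) for some \(v\in S_x\)); choosing \(\eta_h\in G_{t_h}\) with \(\s(\eta_h)=x\), the map \(g\mapsto g\eta_h\) is a \(G\)\nb-equivariant homeomorphism \(G_{\rg(\eta_h)}\cong (G_{t_h})_x\). Piecing these together gives \((G\rtimes S)_x\cong\bigsqcup_h G_{\rg(\eta_h)}\) as left \(G\)\nb-spaces, so \(\Lambda'_x|_{\Sect(G,\B)}\cong\bigoplus_h\Lambda^G_{\rg(\eta_h)}\). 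Each summand is a regular representation of~\(G\), hence \(\norm{\Lambda'_x(f)}=\sup_h\norm{\Lambda^G_{\rg(\eta_h)}(f)}\le\norm{f}_{\Cred(G,\B)}\), which is the inequality you need. Since \(1\in S_x\) always, the summand for \(h=[1]\) is \(\Lambda^G_x\), and this recovers your compression inequality as well. Your nondegeneracy argument is fine; the paper states it as \(\Sect(G,\B)\cdot\Sect(G\rtimes S,\B)=\Sect(G\rtimes S,\B)\).
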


The analogue of this lemma for full section algebras is true, see
\cite{Buss-Meyer:Actions_groupoids}*{Corollary 5.7}, but the only
proof we know uses the theorem about iterated crossed products.

\begin{proof}
  Fix \(x\in G^0 = (G\rtimes S)^0\)
  and let \(S_x = \{t\in S\mid x\in \s(G_t)\}\).
  Equivalently, \(t\in S_x\)
  if and only if the action of~\(t\)
  on~\(G^0/G\)
  is defined on the orbit of~\(x\).
  Say that~\(t\sim_x u\)
  if there is \(v\in S_x\)
  with \(v\le t,u\);
  this is an equivalence relation on~\(S_x\).
  We have \(t\sim_x u\)
  if and only if \([t,x]=[u,x]\)
  in \((G\rtimes S)^1\).
  For each equivalence class~\(h\)
  in~\(S_x/{\sim_x}\),
  pick a representative~\(t_h\)
  and pick \(\eta_h\in G_{t_h}\)
  with \(\s(g_h)=x\).
  Since~\(G_{t_h}\)
  is a partial equivalence of~\(G\),
  the source map induces an injective, open, continuous map
  \(G\backslash G_{t_h} \to G^0\).
  Thus the left \(G\)\nb-action
  gives a \(G\)\nb-equivariant
  homeomorphism \(G_{\rg(\eta_h)} \cong (G_{t_h})_x\),
  \(g\mapsto g\eta_h\).
  These maps piece together to a \(G\)\nb-equivariant
  homeomorphism
  \((G\rtimes S)_x \cong \bigsqcup_{h\in S_x/{\sim_x}} G_{\s(\eta_h)}\).
  Thus the restriction of the left regular representation
  \(\Lambda^{G\rtimes S}_x\)
  to~\(\Cst(G,\B)\)
  is unitarily equivalent to the sum of the representations
  \(\bigoplus_{h\in S_x/{\sim_x}} \Lambda^G_{\s(\eta_h)}\).
  Since \(1\in S_x\),
  this direct sum contains the regular representation~\(\Lambda^G_x\).

  By definition, \(\Cred(G\rtimes S,\B)\)
  is the completion of~\(\Sect(G\rtimes S,\B)\)
  in the \(\Cst\)\nb-seminorm
  coming from the regular representations at all \(x\in G^0\).
  As we just saw, the restriction of a regular representation
  of~\(\Sect(G\rtimes S,\B)\)
  to~\(\Sect(G,\B)\)
  is a sum of regular representations of~\(\Sect(G,\B)\),
  and every regular representation of~\(\Sect(G,\B)\)
  occurs in these sums.  Hence the reduced \(\Cst\)\nb-seminorm
  on~\(\Sect(G\rtimes S,\B)\)
  restricts to the reduced \(\Cst\)\nb-seminorm
  on~\(\Sect(G,\B)\),
  and the \Star{}homomorphism
  \(\Cred(G,\B) \to \Cred(G\rtimes S,\B)\)
  is faithful.
  It is nondegenerate because
  \(\Sect(G,\B)\cdot \Sect(G\rtimes S,\B) = \Sect(G\rtimes S,\B)\).
\end{proof}

Let~\(\Cred(G_t,\B)\)
be the closure of \(\Sect(G_t,\B)\)
in~\(\Cred(G\rtimes S,\B)\).
The space~\(\Sect(G_t,\B)\)
is a pre-Hilbert \(\Sect(G,\B)\)-\(\Sect(G,\B)\)-bimodule
using the convolution and involution in~\(\Sect(G\rtimes S,\B)\).
Thus the closure \(\Cred(G_t,\B)\)
becomes a Hilbert bimodule over the closure of~\(\Sect(G,\B)\)
in \(\Cred(G\rtimes S,\B)\),
which is~\(\Cred(G,\B)\)
by Lemma~\ref{lem:reduced_embed}.  Moreover,
\[
\Cred(G_t,\B) \otimes_{\Cred(G,\B)} \Cred(G_u,\B) \cong
\Cred(G_{t u},\B)
\]
because the convolution map
\[
\Sect(G_t,\B) \otimes_{\Sect(G,\B)} \Sect(G_u,\B) \to
\Sect(G_{t u},\B)
\]
in~\(\Sect(G\rtimes S,\B)\)
has dense image for all \(t,u\in S\).
Thus \(\Cred(G_t,\B)\)
for \(t\in S\)
is a saturated Fell bundle over~\(S\)
with unit fibre \(\Cred(G,\B)\).
Let \(\Cred(G,\B)\rtimes_\red S\)
be its reduced section \(\Cst\)\nb-algebra.
When is this \(\Cst\)\nb-algebra
isomorphic to \(\Cred(G\rtimes S,\B)\)?

We first attempted to prove this, until we found counterexamples.  The
problem is related to a possible failure of the following weak
exactness property:

\begin{definition}[see
  \cite{AnantharamanDelaroch:Weak_containment}*{Definition 2.6}]
  \label{def:inner_exact}
  Let~\(G\)
  be a locally compact, locally Hausdorff groupoid with Haar system.
  Let~\(\B\)
  be a Fell bundle over~\(G\).
  We say that~\(\B\)
  is \emph{inner exact}
  if, for each \(G\)\nb-invariant
  open subset \(U\subseteq G^0\)
  and \(F\defeq G^0\backslash G\), the sequence
  \begin{equation}
    \label{eq:Sequence-Definition-B-exact}
    \Cred(G_U,\B)\into \Cred(G,\B)\onto \Cred(G_{F},\B)
  \end{equation}
  is exact.  We call~\(G\)
  \emph{inner exact} if the Fell bundle~\(\B\)
  describing the canonical (or ``trivial'') action of~\(G\)
  on~\(G^0\) is inner exact, that is, if the sequence
  \[
  \Cred(G_U)\into \Cred(G)\onto \Cred(G_{F})
  \]
  is exact for each \(G\)\nb-invariant
  open subset \(U\subseteq G^0\).
\end{definition}

Minimal groupoids are inner exact for all Fell bundles because they
have no non-trivial \(G\)\nb-invariant open subsets \(U\subseteq
G^0\).

The analogue of the
sequence~\eqref{eq:Sequence-Definition-B-exact} with full
cross-section \cstar{}algebras of Fell bundles is always exact, at
least for Hausdorff locally compact groupoids,
see~\cite{Ionescu-Williams:Remarks_ideal_structure}.  Since amenable
groupoids always have isomorphic full and reduced Fell bundle
cross-section \cstar{}algebras
(see~\cite{Sims-Williams:Amenability_for_Fell_bundles_over_groupoids}),
all Fell bundles over amenable, locally compact, Hausdorff groupoids
(with Haar system) are inner exact.

It is easy to find examples of groupoids which are not inner exact and
non-Hausdorff (see Example~\ref{exa:Khoshkam-Skandalis}).  Hausdorff
examples are also known, but more subtle (see
Example~\ref{exa:Higson-Lafforgue-Skandalis}).  Inner exactness
of~\(G\)
does not imply inner exactness of all Fell bundles~\(\B\).
For instance, any trivial group bundle \(G=X\times \Gamma\)
is inner exact for the trivial Fell bundle, but not necessarily for
all Fell bundles if~\(\Gamma\)
is not exact (see Example~\ref{exa:KirchbergWasserman}).

\subsection{Counterexamples}
\label{sec:Examples}

Let~\(G\)
be a groupoid and let~\(\B\)
be a Fell bundle over~\(G\).
Assume that~\(\B\)
is \emph{not} inner exact.
Hence there is an open \(G\)\nb-invariant
subset \(U\subseteq G^0\)
such that the sequence~\eqref{eq:Sequence-Definition-B-exact}
is \emph{not} exact.  We are going to construct an action of an
inverse semigroup~\(S\)
on~\(G\)
with \(\Cred(G,\B)\rtimes_\red S\ncong \Cred(G\rtimes S,\B)\).

The inverse semigroup~\(S\)
and its action on~\(G\)
are embarrassingly trivial.  Let~\(S\)
be the inverse semigroup with three elements~\(0,1,-1\),
with usual number multiplication.  Thus~\(S\)
is the group \(\Z/2\cong \{1,-1\}\)
with a zero element added.
The same inverse semigroup is used
in~\cite{Buss-Meyer:Actions_groupoids} to give an example of an action
by partial Morita--Rieffel equivalences that is not equivalent to an
action by partial automorphisms.  We let both \(1\)
and~\(-1\)
in~\(S\)
act by the identity automorphism on~\(G\),
and we let~\(0\)
act by the identity on the open subgroupoid~\(G_U\),
where~\(U\)
witnesses the lack of inner exactness.  We use the obvious
multiplication isomorphisms.  We describe the transformation groupoid
and its \(S\)\nb-grading.
The transformation groupoid is the quotient of the product groupoid
\(G\times \Z/2\)
by the equivalence relation that is generated by \((g,1)\sim (g,-1)\)
for \(g\in G_U\).  The \(S\)\nb-grading is
\[
G_1 = \{[g,1] \mid g\in G^0\},\quad
G_{-1} = \{[g,-1] \mid g\in G^0\},\quad
G_0 = G_1\cap G_{-1} \cong G_U.
\]

The resulting action of~\(S\)
on \(A\defeq \Cred(G,\B)\)
is trivial on~\(\Z/2\),
and the idempotent \(0\in S\)
acts by the ideal \(I\defeq \Cred(G_U,\B)\)
in~\(A\).  We are going to prove:

\begin{lemma}
  \label{lem:reduced_crossed_products_counterexamples}
  In the situation above,
  \(\Cred(G,\B)\rtimes_\red S\ncong \Cred(G\rtimes S,\B)\)
  whenever inner exactness fails.  Explicitly,
  \[
  \Cred(G,\B)\rtimes_\red S
  \cong \Cred(G,\B) \oplus \frac{\Cred(G,\B)}{\Cred(G_U,\B)},\qquad
  \Cred(G\rtimes S,\B) \cong \Cred(G,\B) \oplus \Cred(G_F,\B).
  \]
\end{lemma}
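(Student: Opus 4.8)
The plan is to compute both $C^*$-algebras in the displayed formulas explicitly, and then to read off the failure of the canonical map between them from the failure of inner exactness.

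First I would identify the Fell bundle over $S$ attached to the action. Since $1$ and $-1$ act by the identity automorphism of $G$, its fibres are $\Hilm_1\cong\Hilm_{-1}\cong A\defeq\Cred(G,\B)$, the identity $A$-bimodule, and $\Hilm_0\cong I\defeq\Cred(G_U,\B)$, which sits in $A$ as a closed ideal; all multiplication maps are the evident ones. For $t\neq u$ in $S$ the only $v$ with $v\le t,u$ is $v=0$, so $I_{t,u}=I$ in each such case, while $I_{1,1}=I_{-1,-1}=A$ and $I_{0,0}=I$, and the maps $\theta_{u,t}$ are all the identity of $I$ under the obvious identifications. Using the relations $a\delta_0\equiv a\delta_1$ and $a\delta_0\equiv a\delta_{-1}$ ($a\in I$) to eliminate the $\Hilm_0$-summand, one finds $A\rtimes_\alg S\cong(A\delta_1\oplus A\delta_{-1})/\{a\delta_1-a\delta_{-1}:a\in I\}$ and that $a\delta_1+b\delta_{-1}\mapsto(a+b,\,\bar a-\bar b)$ is a $*$-algebra isomorphism onto $A\oplus A/I$ (checking the four products $\mu_{\pm1,\pm1}$ and the involution $J_{-1}$ is routine, since every bimodule in sight is $A$ or the ideal $I$). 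As $A\rtimes_\alg S$ embeds injectively into $A\rtimes_\red S$ by Proposition~\ref{pro:embed_alg_red} and is $*$-isomorphic to the $C^*$-algebra $A\oplus A/I$, its reduced $C^*$-seminorm is the unique $C^*$-norm of $A\oplus A/I$; hence $\Cred(G,\B)\rtimes_\red S\cong A\oplus A/I=\Cred(G,\B)\oplus\Cred(G,\B)/\Cred(G_U,\B)$, which is the first formula.

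Running the same computation with $A$ replaced by $\Cst(G,\B)$ and $I$ by $\Cst(G_U,\B)$ gives $\Cst(G\rtimes S,\B)=\Cst(G,\B)\rtimes S\cong\Cst(G,\B)\oplus\Cst(G,\B)/\Cst(G_U,\B)\cong\Cst(G,\B)\oplus\Cst(G_F,\B)$, the last step because the sequence $\Cst(G_U,\B)\into\Cst(G,\B)\onto\Cst(G_F,\B)$ of full section algebras is exact~\cite{Ionescu-Williams:Remarks_ideal_structure}. Then $\Cred(G\rtimes S,\B)$ is the image of this under $\Lambda\defeq\bigoplus_{x\in G^0}\Lambda^{G\rtimes S}_x$, which I would analyse using the description of $(G\rtimes S)_x$ from the proof of Lemma~\ref{lem:reduced_embed}: for $x\in U$ one has $(G\rtimes S)_x\cong G_x$, and since every arrow out of $x\in U$ lies over $G_U$, where $1$ and $-1$ are glued, both $\xi\delta_1$ and $\xi\delta_{-1}$ act on $L^2(G_x,\B)$ as $\Lambda^G_x(\xi)$, so $\Lambda^{G\rtimes S}_x$ is $\Lambda^G_x$ composed with the projection onto the $\Cst(G,\B)$-summand; for $x\in F\defeq G^0\setminus U$ one has $(G\rtimes S)_x\cong G_x\sqcup G_x$, on $L^2(G_x,\B)^{\oplus2}$ the element $\xi\delta_1$ acts diagonally and $\xi\delta_{-1}$ off-diagonally, so a $\Z/2$-Fourier transform turns $\Lambda^{G\rtimes S}_x$ into $\xi\delta_1+\eta\delta_{-1}\mapsto\Lambda^G_x(\xi+\eta)\oplus\Lambda^G_x(\xi-\eta)$, whose second summand factors through $\Cst(G,\B)\onto\Cst(G_F,\B)$ and yields $\Lambda^{G_F}_x$. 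Matching with the splitting $\Cst(G\rtimes S,\B)\cong\Cst(G,\B)\oplus\Cst(G_F,\B)$, the representation $\Lambda$ becomes $\bigl(\bigoplus_{x\in G^0}\Lambda^G_x\bigr)$ on the first coordinate plus $\bigl(\bigoplus_{x\in F}\Lambda^{G_F}_x\bigr)$ on the second, so its image is $\Cred(G,\B)\oplus\Cred(G_F,\B)$; this is the second formula.

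Finally, the canonical surjection $\Cred(G,\B)\rtimes_\red S\onto\Cred(G\rtimes S,\B)$ (induced from the regular representations of $\Cred(G,\B)$, which induce to the regular representations of $G\rtimes S$) becomes, under the two identifications above, the map $\Id_{\Cred(G,\B)}\oplus\,q_0$, where $q_0\colon\Cred(G,\B)/\Cred(G_U,\B)\to\Cred(G_F,\B)$ is the canonical quotient map; this is injective if and only if $q_0$ is, i.e.\ if and only if $\B$ is inner exact. Since inner exactness was assumed to fail, $\Cred(G,\B)\rtimes_\red S\ncong\Cred(G\rtimes S,\B)$. The main obstacle is the third paragraph: one must carefully unwind the fibres $(G\rtimes S)_x$ in the two cases $x\in U$ and $x\in F$ and check that the resulting decomposition of $\Lambda$ is compatible with the direct-sum splitting of $\Cst(G\rtimes S,\B)$ coming from the crossed-product computation; everything else is bookkeeping with the three-element inverse semigroup $S$.
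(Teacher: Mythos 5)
Your computation of the first formula is essentially the paper's own argument: the paper first proves a general structure result (Proposition~\ref{pro:crossed_by_01-1}) for actions of \(S=\{0,1,-1\}\) encoded by a triple \((I,\alpha,u)\), showing \(A\rtimes S = A\rtimes_\red S = A\rtimes_\alg S \cong (A\rtimes_\alpha\Z/2)/J\), and then specialises to \(\alpha=\Id\), \(u=1\) via the Fourier isomorphism to get \(A\oplus A/I\); you carry out the specialisation directly, with the same appeal to Proposition~\ref{pro:embed_alg_red} and to uniqueness of the \(\Cst\)\nb-norm on a \Star{}algebra admitting a complete one. Your closing identification of the canonical map as \(\Id\oplus q_0\), non-injective exactly when inner exactness fails, matches the intended meaning of ``\(\ncong\)'' (the induced quotient map is not faithful), which the paper's proof leaves implicit.

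The one step I would not accept as written is your route to the second formula through the \emph{full} algebras. You invoke \(\Cst(G\rtimes S,\B)\cong \Cst(G,\B)\rtimes S\) together with exactness of \(\Cst(G_U,\B)\into \Cst(G,\B)\onto \Cst(G_F,\B)\) from \cite{Ionescu-Williams:Remarks_ideal_structure}. But the lemma is used for locally Hausdorff, possibly non-Hausdorff \(G\) (Example~\ref{exa:Khoshkam-Skandalis}), and the paper only asserts full exactness ``at least for Hausdorff locally compact groupoids''; likewise, the full iterated crossed-product identification for Fell bundles over such transformation groupoids is precisely what the paper treats with caution (see the caveat about the Disintegration Theorem before Lemma~\ref{lem:reduced_embed}, and the remark that the full analogue of that lemma is only known via the iterated crossed-product theorem). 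The paper sidesteps all of this by never leaving the dense section algebra: it uses the surjection \(\Sect(G,\B)\oplus\Sect(G,\B)\cong \Sect(G\rtimes\Z/2,\B)\onto \Sect(G\rtimes S,\B)\) and computes the regular representations of \(G\rtimes S\) at \(x\in U\) and \(x\in F\) on this \Star{}algebra, so that \(\Cred(G\rtimes S,\B)\cong \Cred(G,\B)\oplus\Cred(G_F,\B)\) is obtained directly as a completion, with no full-algebra input. Your own analysis of \(\Lambda_x^{G\rtimes S}\) in the two cases is exactly that computation, so the repair is minor: drop the matching against the splitting of \(\Cst(G\rtimes S,\B)\) and instead read off that the reduced norm of \((c,d)\in\Sect(G,\B)\oplus\Sect(G,\B)\) is the maximum of the reduced norm of \(c\) over \(G\) and the reduced norm of \(d\) over \(G_F\).
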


Although the proof only requires rather trivial actions of~\(S\),
we describe the reduced crossed product~\(A\rtimes_\red S\)
for a general \(S\)\nb-action
on a \(\Cst\)\nb-algebra~\(A\),
and we also discuss the weak conditional expectation and the
\(\Cst\)\nb-subalgebra
of~\(\bid{A}\)
that it generates in order to illustrate our theory.

Let~\(S\)
act on a \(\Cst\)\nb-algebra~\(A\).
The idempotents \(0,1\)
act by \(A_0=\Id_I\)
and \(A_1=\Id_A\)
for some ideal \(I\idealin A\).
For the counterexamples, it is important to choose \(I\neq 0\).
The element \(-1\in S\)
acts by a full Hilbert bimodule~\(\Hilm_{-1}\).
The action also contains isomorphisms of Hilbert bimodules
\[
\Hilm_{-1}\otimes_A \Hilm_{-1} \cong A,\qquad
I\otimes_A \Hilm_{-1} \cong I \cong \Hilm_{-1} \otimes_A I;
\]
the remaining isomorphisms are canonical.  The multiplication
isomorphisms above have to be associative as well.  Since this implies
compatibility with the involution, the two isomorphisms
\(I\otimes_A \Hilm_{-1} \cong I\)
and \(\Hilm_{-1} \otimes_A I\cong I\)
determine each other, so we have to specify only one of them.

The Hilbert bimodule~\(\Hilm_{-1}\)
and the isomorphism \(\Hilm_{-1}\otimes_A \Hilm_{-1} \cong A\)
give a (saturated) Fell bundle over the group~\(\Z/2\).
Any such Fell bundle may be turned into an ordinary action by
automorphisms by a stabilisation.  The quickest way to see this uses
Takesaki--Takai duality.  The section algebra of the Fell bundle
over~\(\Z/2\)
carries a dual action of the dual group (which is again~\(\Z/2\)),
and the crossed product for that dual action carries an action
of~\(\Z/2\)
by automorphisms and is \(\Z/2\)\nb-equivariantly
isomorphic to \(A\otimes \Mat_2(\C)\)
by Takesaki--Takai Duality.  Hence it is no serious loss of generality
to replace the full Hilbert bimodule~\(\Hilm_{-1}\)
and the isomorphism \(\Hilm_{-1}\otimes_A \Hilm_{-1} \cong A\)
by an automorphism~\(\alpha\)
of~\(A\)
with \(\alpha^2=\Id_A\).
Then \(\Hilm_{-1}=A\)
with the usual right Hilbert module structure and the left action
through~\(\alpha\).

Since \(\Hilm_{-1}\cong A\) both as a left and a right Hilbert
module, there are canonical isomorphisms \(\Hilm_{-1} \otimes_A I\cong I\)
as a right Hilbert module and \(I \otimes_A \Hilm_{-1}\cong I\) as a
left Hilbert module; the first is of the form \(a\otimes b\mapsto a
b\), the second of the form \(a\otimes b\mapsto \alpha(a) b\).  Any
other such isomorphisms are obtained from these ones by composing with
unitary multipliers of~\(I\).  Thus the isomorphisms \(\Hilm_{-1}
\otimes_A I\cong I\) and \(I \otimes_A \Hilm_{-1}\cong I\) are of
the form \(a\otimes b\mapsto u a b\) and \(a\otimes b\mapsto u'
\alpha(a) b\) for unitary multipliers \(u,u'\) of~\(I\);
compatibility with the bimodule structure gives \(\alpha|_I^{-1} =
\Ad(u)=\Ad(u')\).  In particular, the ideal~\(I\) is
\(\alpha\)\nb-invariant.  The associativity conditions for a Fell
bundle are equivalent to the conditions \(u=u'\) and \(u^2=1\),
which are reasonable requests because \(\Ad(u^2)= \alpha^2|_I =
\Id_A\).  Thus after some simplifications, an action of~\(S\)
on~\(A\) becomes a triple \((I,\alpha,u)\) with \(I\idealin
A\), \(\alpha\in\Aut(A)\), \(u\in\Mult(I)\), \(\alpha^2=\Id_A\),
\(\alpha|_I = \Ad(u)\), and \(u^2=1\).

\begin{proposition}
  \label{pro:crossed_by_01-1}
  The subset \(J\defeq \{\delta_1 a - \delta_{-1} ua \mid a\in I\}\)
  in~\(A\) is a closed ideal and there are canonical isomorphisms
  \[
  A\rtimes S
  \cong A\rtimes_\red S
  \cong A\rtimes_\alg S
  \cong (A\rtimes_\alpha\Z/2)/J.
  \]
\end{proposition}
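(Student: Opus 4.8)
The plan is to identify all three crossed products appearing in the statement with the single $\Cst$\nb-algebra $(A\rtimes_\alpha\Z/2)/J$. In view of the reduction carried out just above the statement, I may assume the $S$\nb-action on~$A$ is described by a triple $(I,\alpha,u)$ with $\alpha^2=\Id_A$, $\alpha|_I=\Ad(u)$ and $u^2=1$; thus $\Hilm_0=I$, $\Hilm_1=A$, $\Hilm_{-1}=A$ with the usual right module structure and left action through~$\alpha$, and the multiplication isomorphisms are the canonical ones together with $\xi\otimes a\mapsto u\xi a$ on $\Hilm_{-1}\otimes_A I\to I$. Since $\Z/2$ is finite, $A\rtimes_\alpha\Z/2$ equals its dense $\Star{}$subalgebra $\delta_1 A\oplus\delta_{-1}A$, so full and reduced $\Z/2$\nb-crossed products coincide. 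First I would check that $J$ is a closed, two-sided $\Star{}$ideal of $A\rtimes_\alpha\Z/2$, so that $(A\rtimes_\alpha\Z/2)/J$ is a $\Cst$\nb-algebra: that it is a two-sided $\Star{}$ideal is a short direct computation from $\alpha|_I=\Ad(u)$, $u^2=1$ and the facts that $I$ is an ideal and $\alpha$\nb-invariant, and closedness follows from the canonical conditional expectation $P\colon A\rtimes_\alpha\Z/2\to A$ onto the $\delta_1$\nb-summand, since $P$ is contractive and $P(\delta_1 a-\delta_{-1}ua)=\delta_1 a$, so $a\mapsto\delta_1 a-\delta_{-1}ua$ is bounded below by~$\norm a$ and hence $J$, the isomorphic image of the complete space~$I$, is closed.

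The heart of the argument is a $\Star{}$isomorphism $A\rtimes_\alg S\cong(A\rtimes_\alpha\Z/2)/J$. The key computation is that of the inclusion maps of the Fell bundle over~$S$: from the defining identity $j_{u,t}(\mu_{u,t^*t}(\xi\otimes a))=\xi\cdot a$ one reads off that $j_{1,0}\colon\Hilm_0\to\Hilm_1$ is the inclusion $I\hookrightarrow A$, while $j_{-1,0}\colon\Hilm_0\to\Hilm_{-1}$ is the map $c\mapsto uc$ --- this is where the multiplier~$u$ enters and dictates the precise shape of~$J$. Consequently the defining relations of $A\rtimes_\alg S$ identify $a\delta_0$ with $a\delta_1$ and with $(ua)\delta_{-1}$ for $a\in I$, so the summand $\Hilm_0$ is absorbed into~$\Hilm_1$ and the only surviving relation among $\Hilm_1\oplus\Hilm_{-1}$ is $a\delta_1\equiv(ua)\delta_{-1}$, $a\in I$; that no further relations appear follows from the description of the relation submodule in the proof of Proposition~\ref{pro:embed_alg_red}. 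Since on the fibres $\Hilm_1,\Hilm_{-1}$ the multiplication and involution of the Fell bundle over~$S$ are, by construction of the triple $(I,\alpha,u)$, those of the crossed product $A\rtimes_\alpha\Z/2$, this identifies $A\rtimes_\alg S$ with $(\delta_1 A\oplus\delta_{-1}A)/J$. I would make this rigorous by exhibiting mutually inverse $\Star{}$homomorphisms: in one direction, the canonical maps $\Hilm_{\pm1}\to A\rtimes_\alg S$ form a representation of the $\Z/2$\nb-Fell bundle, hence a $\Star{}$homomorphism $A\rtimes_\alpha\Z/2\to A\rtimes_\alg S$, which is surjective because $\Hilm_0$ sits inside $\Hilm_1$ in $A\rtimes_\alg S$ by Proposition~\ref{pro:crossed_product_section_algebra} and which annihilates~$J$ by the relations above; in the other direction the assignment $a\delta_t\mapsto\delta_1 a+J$ for $t\in\{0,1\}$ and $a\delta_{-1}\mapsto\delta_{-1}a+J$, which one checks vanishes on each defining relation of $A\rtimes_\alg S$ (the cases $(0,1)$, $(0,-1)$, $(1,-1)$) and is multiplicative and $\Star{}$preserving.

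Finally the remaining identifications are soft. Because $(A\rtimes_\alpha\Z/2)/J$ is a $\Cst$\nb-algebra, $A\rtimes_\alg S$ is already complete in a $\Cst$\nb-norm; since every $\Cst$\nb-seminorm on a $\Cst$\nb-algebra is dominated by its norm, this norm is the maximal $\Cst$\nb-seminorm on $A\rtimes_\alg S$, whence $A\rtimes S=A\rtimes_\alg S$. By Proposition~\ref{pro:embed_alg_red} the canonical map $A\rtimes_\alg S\to A\rtimes_\red S$ is injective; being a $\Star{}$homomorphism of $\Cst$\nb-algebras it is then isometric with closed range, and its range is dense by the definition of $A\rtimes_\red S$, so it is onto. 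Hence $A\rtimes S\cong A\rtimes_\red S\cong A\rtimes_\alg S\cong(A\rtimes_\alpha\Z/2)/J$. I expect the main obstacle to be the second paragraph: computing $j_{-1,0}$ correctly and verifying that the a priori three-fibre crossed product $A\rtimes_\alg S$ collapses to exactly the quotient of the two-fibre crossed product $A\rtimes_\alpha\Z/2$ by~$J$, with no spurious further relations; the closedness of~$J$ is the only other point requiring a genuine, if brief, argument.
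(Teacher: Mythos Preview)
Your proposal is correct and follows essentially the same route as the paper: compute the inclusion maps \(j_{1,0}(c)=c\) and \(j_{-1,0}(c)=uc\), observe that the \(\Hilm_0\)-summand is absorbed and the remaining relation is exactly~\(J\), and then use Proposition~\ref{pro:embed_alg_red} to pass from the algebraic to the reduced crossed product. The one notable difference is in how you show \(J\) is closed: you bound the map \(a\mapsto \delta_1 a-\delta_{-1}ua\) from below via the conditional expectation onto the \(\delta_1\)-summand, whereas the paper observes that \(c\mapsto\tfrac12(\delta_1 c-\delta_{-1}uc)\) is a \Star{}homomorphism \(I\to A\rtimes_\alpha\Z/2\) (a short check using \(\alpha|_I=\Ad(u)\) and \(u^2=1\)), so its image~\(J\) is automatically a closed \Star{}subalgebra and you get the ideal property for free once you know \(A\rtimes_\alg S\) is the quotient by~\(J\). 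Your direct verification that \(J\) is a two-sided \Star{}ideal is fine but is made redundant by the paper's trick.
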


\begin{proof}
  The algebraic crossed product~\(A\rtimes_\alg\Z/2\)
  consists of elements of the form \(\delta_1 a+ \delta_{-1} b\),
  \(a,b\in A\),
  with the usual relations of a crossed product, generated by
  \(\delta_{-1}^2=\delta_1\)
  and \(\delta_{-1} a\delta_{-1} = \alpha(a)\).
  It is already a \(\Cst\)\nb-algebra
  because~\(\Z/2\)
  is finite.  The algebraic crossed product~\(A\rtimes_\alg S\)
  consists of equivalence classes of sums of the form
  \(\delta_1 a+ \delta_{-1} b+ \delta_0 c\)
  with \(a,b\in A\),
  \(c\in I\),
  where we divide out the relations
  \(\delta_0 c \equiv \delta_1 c \equiv \delta_{-1} uc\)
  for all \(c\in I\)
  because \(j_{1,0}(c) = c\)
  and \(j_{-1,0}(c) = u c\) for all \(c\in I\).  In our case,
  \(j_{1,0} = \theta_{1,0}\) and \(j_{-1,0} = \theta_{-1,0}\).

  Thus the obvious \Star{}homomorphism \(A\rtimes_\alg \Z/2\to
  A\rtimes_\alg S\) is surjective, and \(A\rtimes_\alg S\) is the
  quotient of~\(A\rtimes_\alg\Z/2\) by~\(J\), forcing it to be a
  \Star{}ideal.  The map
  \[
  I \to A\rtimes \Z/2,\qquad c\mapsto \frac{1}{2}(\delta_1 c - \delta_{-1} u c),
  \]
  is a \Star{}homomorphism and~\(J\) is its image.  So~\(J\) is closed
  and \(A\rtimes_\alg S = (A\rtimes_\alg \Z/2)/J\) is a
  \(\Cst\)\nb-algebra.  Thus the full \(\Cst\)\nb-algebra is
  \(A\rtimes_\alg S\) with its quotient norm.  So is \(A\rtimes_\red
  S\) because the map \(A\rtimes_\alg S \to A\rtimes_\red S\) is
  injective by Proposition~\ref{pro:embed_alg_red}.
\end{proof}

We describe the weak conditional expectation
\(E\colon A\rtimes S\to \bid{A}\)
and \(\tilde{A}\subseteq \bid{A}\).
The ideal~\(I_{1,-1}\)
is~\(I\), and the proof of Proposition~\ref{pro:A_tilde_A} implies
\[
E(\delta_1 a+ \delta_{-1} b) = a + ub[I] = a + \alpha(b) u[I]
= a + \alpha(b) u
\]
for all \(a,b\in A\), where the products take place in~\(\bid{A}\).
The last step uses \(u^* u = u u^* = [I]\), which holds because
\(u\in\Mult(I)\) is unitary.  Notice that \(E|_J=0\).  The element
\(u\in\bid{A}\) is a self-adjoint partial isometry with \(u^2=[I]\).
It satisfies \(\alpha(b) u c = E(\delta_{-1} b)c = E(\delta_{-1} b
c) = \alpha(b c) u\) for all \(b,c\in A\), which gives \(u c =
\alpha(c) u\) in~\(\bid{A}\) for \(c\in A\).  So elements of the
form~\(b[I]\) also belong to~\(\tilde{A}\).  The subset
\(\{a(1-[I])+b u + c[I]\mid a,b,c\in A\}\) is a \Star{}subalgebra
of~\(\bid{A}\).  We will show below that it is closed; hence this
is~\(\tilde{A}\).

The elements of the form \(a(1-[I])\) and \(b u + c[I]\) are
orthogonal, and \(a(1-[I])=0\) if and only if \(a=a[I]\), that is,
\(a\in I\).  Thus \(\{a (1-[I]) \mid a\in A\}\) gives a summand
isomorphic to~\(A/I\) in~\(\tilde{A}\).  If \(b\in I^\bot = \{a\in
A\mid a\cdot b=0\) for all \(b\in I\}\), then \(b u=0\) and \(b
[I]=0\), and vice versa.  Hence there is a well-defined map
\[
\varphi\colon A/I^\bot\rtimes_{\alpha} \Z/2 \to \bid{A},\qquad
[b]\delta_{-1}+[c]\delta_1\mapsto b u+c[I].
\]
This is a \Star{}homomorphism because \(u^2=[I]\) and \(u b =
\alpha(b) u\) for all \(b\in A\).  Hence its image is closed.  This
image together with the other summand~\(A/I\) is~\(\tilde{A}\).  The
\Star{}homomorphism~\(\varphi\) may fail to be injective, for
instance, if \(u=[I]\).  We do not describe the kernel
of~\(\varphi\).

After this illustration of our general theory, we return to the
proof of
Lemma~\ref{lem:reduced_crossed_products_counterexamples}.  Here
\(u=1\)
and \(\alpha=\Id\),
that is, the only non-trivial aspect of the action is the ideal~\(I\).
The Fourier isomorphism \(\Cst(\Z/2)\cong \C^2\)
induces an isomorphism
\[
A\rtimes_\alpha \Z/2 = A\otimes \Cst(\Z/2)\cong A\oplus A,\qquad
\delta_1 a+\delta_{-1} b\mapsto (a+b,a-b).
\]
This isomorphism maps the ideal \(J\subseteq A\rtimes_\alpha\Z/2\)
to \(0\oplus I\).  Therefore,
\begin{equation}
  \label{eq:IsoTrivialActionWithIdealI}
  A\rtimes S = A\rtimes_\red S = A\rtimes_\alg S \cong A\oplus (A/I)
\end{equation}
in this ``trivial'' special case.  This proves the assertion about
\(\Cred(G,\B)\rtimes_\red S\)
in Lemma~\ref{lem:reduced_crossed_products_counterexamples}.

Now we turn to the reduced groupoid \(\Cst\)\nb-algebra
of~\(G\rtimes S\).  We use the surjection
\[
\Sect(G,\B)\oplus \Sect(G,\B) \cong
\Sect(G,\B)\otimes \C[\Z/2] \cong
\Sect(G\rtimes \Z/2,\B) \onto
\Sect(G\rtimes S,\B),
\]
where the first isomorphism is the inverse Fourier isomorphism
for~\(\Z/2\)
and the last map is the obvious one,
\(a\delta_1 + b\delta_{-1}\mapsto a\delta_1 + b\delta_{-1}\).
Thus~\(\Cred(G\rtimes S,\B)\)
is isomorphic to the completion of \(\Sect(G,\B)\oplus \Sect(G,\B)\)
for the family of regular representations of~\(G\rtimes S\),
viewed as representations of \(\Sect(G,\B)\oplus \Sect(G,\B)\)
through the surjection above.  If \(x\in U\),
then \((G\rtimes S)_x = G_x\).
Hence the resulting regular representation is the standard regular
representation for~\(x\)
on one summand~\(\Sect(G,\B)\)
(corresponding to the trivial character on~\(\Z/2\))
and kills the other summand.  If \(x\in F\),
then \((G\rtimes S)_x = G_x\times\Z/2\),
and \(\Z/2\)
acts freely on the second factor.  Hence the resulting regular
representation is the standard regular representation for~\(x\)
on both summands~\(\Sect(G,\B)\).
Thus
\(\Cred(G\rtimes S,\B) \cong \Cred(G,\B) \oplus \Cred(G_F,\B)\).
This finishes the proof of
Lemma~\ref{lem:reduced_crossed_products_counterexamples}.

We still have to exhibit examples where inner exactness fails.  Here
we may use counterexamples produced for other purposes in the
literature.  The first example comes from~\cite{Khoshkam-Skandalis:Regular}.

\begin{example}
  \label{exa:Khoshkam-Skandalis}
  For a discrete group~\(\Gamma\),
  let~\(G^\Gamma\)
  be the (étale) group bundle over~\([0,1]\)
  with fibres \(G^\Gamma(1)=\Gamma\)
  and the trivial fibre at \(x\neq1\);
  this is the quotient of the group bundle \([0,1]\times\Gamma\)
  by the relation \((x,\gamma)\sim (x,\gamma')\)
  for all \(x\in [0,1)\), \(\gamma,\gamma'\in \Gamma\).

  We assume that~\(\Gamma\) is not amenable, say, a free group.  Then
  \[
  \Cred(G^\Gamma)\cong \Cont[0,1]\oplus \Cred(\Gamma),
  \]
  see \cite{Khoshkam-Skandalis:Regular}*{p.~53}.  Let \(U=[0,1)\)
  and \(F=\{1\}\).
  Then \(G^\Gamma_U \cong [0,1)\)
  and \(G^\Gamma_F = \Gamma\),
  so \(\Cred(G^\Gamma_U) \cong \Cont_0([0,1))\)
  and \(\Cred(G^\Gamma_F) \cong \Cred(\Gamma)\).
  Hence
  \(\Cred(G^\Gamma)/\Cred(G^\Gamma_U) \cong \C\oplus \Cred(\Gamma)
  \ncong \Cred(G^\Gamma_F)\),
  so inner exactness fails here.  Hence the ``trivial'' action of
  \(S=\{0,1,-1\}\)
  given by~\(U\)
  satisfies
  \(\Cred(G^\Gamma\rtimes S)\ncong \Cred(G^\Gamma)\rtimes_\red S\)
  by Lemma~\ref{lem:reduced_crossed_products_counterexamples}.
\end{example}

In the above example, the groupoid~\(G\)
on which~\(S\)
acts is non-Hausdorff.  Now we give counterexamples where~\(G\)
is a Hausdorff (étale) groupoid, but we also put a rather trivial Fell
bundle~\(\B\) on~\(G\).

\begin{example}
  \label{exa:KirchbergWasserman}
  Let~\(\Gamma\) be a non-exact group.  Such groups exist by an
  argument of Gromov (see~\cites{Gromov:Spaces_questions,
    Gromov:Random}).  The \(\Cst\)\nb-algebra \(\Cred(\Gamma)\) is
  not exact, that is, there is a \(\Cst\)\nb-algebra~\(A\) and an
  ideal \(I\subseteq A\) such that the sequence
  \begin{equation}
    \label{eq:NonExactKirchbergWasserman}
    I\otimes \Cred(\Gamma)\into
    A\otimes \Cred(\Gamma)\onto
    (A/I)\otimes \Cred(\Gamma)
  \end{equation}
  is not exact (see
  \cite{Kirchberg-Wassermann:Exact_groups}*{Theorem~5.2}).  Here and
  throughout, \(\otimes\) denotes the minimal \(\Cst\)\nb-algebra
  tensor product.  Kirchberg and
  Wassermann~\cite{Kirchberg-Wassermann:Operations} show that~\(A\)
  may be chosen to be the section \(\Cst\)\nb-algebra of a
  continuous field~\((A_x)_{x\in X}\) over the one-point
  compactification \(X\defeq \N^+ = \N\cup\{\infty\}\) and \(I=
  \Cont_0(\N)\cdot A\).  Then \(A/I=A_\infty\) is the fibre at
  \(\infty\in\N\).  Let \(G=X\times\Gamma\) be the trivial group
  bundle over~\(X\) with fibre~\(\Gamma\) everywhere.
  Let~\(\Gamma\) act trivially on~\(A\) and form the Fell
  bundle~\(\B\) over~\(G\) with fibres \((A_x)_{x\in X}\) from this
  trivial action.  Then inner exactness of~\(\B\) fails for the
  open subset \(\N\subseteq X\) by construction.  Since commutative
  \(\Cst\)\nb-algebras are nuclear, the sequence
  \[
  \Cont_0(V)\otimes \Cred(\Gamma)
  \into \Cont(X)\otimes \Cred(\Gamma)
  \onto \Cont(X\setminus V)\otimes \Cred(\Gamma)
  \]
  is exact for any open subset \(V\subseteq X\).  So our
  groupoid~\(G\) is inner exact but a certain Fell bundle~\(\B\) is
  not inner exact.
\end{example}

\begin{example}
  \label{exa:Higson-Lafforgue-Skandalis}
  This example is used
  in~\cite{Higson-Lafforgue-Skandalis:Counterexamples} as a
  counterexample for the Baum--Connes conjecture for groupoids.  It
  has also been used recently by Willett~\cite{Willett:Non-amenable}
  as examples of Hausdorff, étale, non-amenable groupoids~\(G\) with
  \(\Cst(G)\cong \Cred(G)\).

  Let~\(\Gamma\)
  be a countable discrete group and let \(\Gamma_n\subseteq \Gamma\)
  be a sequence of normal subgroups of finite index.  Let
  \(F_n\defeq \Gamma/\Gamma_n\)
  be the quotient groups and let \(\pi_n\colon \Gamma\to F_n\)
  be the quotient homomorphisms.  Let \(\bar \N=\N\sqcup \{\infty\}\)
  be the one-point compactification of~\(\N\)
  and let~\(G\)
  be the quotient of \(\bar\N\times \Gamma\)
  by the equivalence relation:
  \[
  (n,g)\sim (m,h)\Longleftrightarrow m=n\in \N
  \quad\text{and}\quad
  \pi_n(g)=\pi_n(h).
  \]
  We write~\([n,g]\)
  for the equivalence class of~\((n,g)\).
  The resulting étale group bundle~\(G\)
  has fibres \(G_n\cong F_n\)
  for \(n\in \N\)
  and \(G_\infty\cong \Gamma\).
  It is Hausdorff if and only if \(\bigcap \Gamma_n = \{1\}\).

  If \(\Gamma_n=\Gamma\) for all~\(n\), then~\(G\) is a
  (non-Hausdorff) group bundle of the same type as in
  Example~\ref{exa:Khoshkam-Skandalis}.  However, we want the
  groupoid~\(G\) to be Hausdorff.  So we assume that~\(\Gamma\) is
  residually finite and choose the subgroups above with \(\bigcap
  \Gamma_n=\{1\}\); for instance, \(\Gamma\) may be a free group or
  \(\textup{SL}_n(\Z)\).  If~\(\Gamma\) is not amenable, then~\(G\)
  is not inner exact because the sequence
  \[
  \Cred(G_\N)\into \Cred(G)\onto \Cred(\Gamma)
  \]
  is not exact in the middle, not even at the level of K\nb-theory,
  see~\cite{Higson-Lafforgue-Skandalis:Counterexamples}.
\end{example}

\subsection{Isomorphism of reduced iterated crossed products}
\label{sec:iso_reduced}

Our counterexamples show that there is no isomorphism
\(\Cred(G,\B)\rtimes_\red S\cong \Cred(G\rtimes S,\B)\)
in general.  There are, however, many cases where this isomorphism
holds.  A first result of this nature is proved
in~\cite{Deaconu-Kumjian-Ramazan:Fell_groupoid_morphism} for groupoid
fibrations of \emph{Hausdorff} \'etale groupoids with amenable kernel.

\begin{theorem}
  \label{the:iterated_crossed_1}
  Let~\(G\)
  be a locally Hausdorff, locally compact groupoid with Haar
  system.  Let~\(S\)
  be an inverse semigroup acting on~\(G\)
  by partial equivalences.  Let~\(\B\)
  be a Fell bundle over the transformation
  groupoid~\(G\rtimes S\) that is
  inner exact over~\(G\).
  Then \(\Cred(G,\B)\rtimes_\red S \cong \Cred(G\rtimes S,\B)\).
\end{theorem}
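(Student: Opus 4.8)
The plan is to compare the two reduced \(\Cst\)-algebras by means of the weak conditional expectation \(E\colon \Cred(G,\B)\rtimes S\to\bid{\Cred(G,\B)}\) and the regular representations of the transformation groupoid, using the induction formalism of Section~\ref{sec:reduced_Cstar-crossed}. Write \(A\defeq\Cred(G,\B)\) and recall from Lemma~\ref{lem:reduced_embed} that \(A\) sits as a nondegenerate \(\Cst\)-subalgebra of \(\Cred(G\rtimes S,\B)\), that the closures \(\Cred(G_t,\B)\) of \(\Sect(G_t,\B)\) form a saturated Fell bundle over~\(S\) with unit fibre~\(A\), and that its full section algebra maps onto \(\Cred(G\rtimes S,\B)\). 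One always has a surjection \(A\rtimes_\red S\onto\Cred(G\rtimes S,\B)\): inducing the regular representation \(\Lambda^G_x\) of \(A=\Cred(G)\) at \(x\in G^0\) along the correspondence \(\ell^2(S,\bid A)\) produces, by the identification \(\Cst(G,\B)\rtimes S\cong\Cst(G\rtimes S,\B)\) of~\cite{Buss-Meyer:Actions_groupoids} and the proof of Lemma~\ref{lem:reduced_embed}, the regular representation \(\Lambda^{G\rtimes S}_x\) of \(\Cred(G\rtimes S,\B)\). Since \(\bigoplus_{x\in G^0}\Lambda^{G\rtimes S}_x\) is faithful on \(\Cred(G\rtimes S,\B)\) and factors through all induced representations, we get the quotient map. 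So the content is the \emph{injectivity} of this map, equivalently the \(E\)-faithfulness of the family \((\Lambda^G_x)_{x\in G^0}\) in the sense of Definition~\ref{def:faithful_for_E}.

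First I would identify \(\tilde A\subseteq\bid A\) explicitly. By Lemma~\ref{lem:description-E}, \(E(\Hilm_t)\subseteq\Mult(I_{1,t})\), and the ideal \(I_{1,t}\idealin A\) is exactly \(\Cred(G_{U_t},\B)\) for the \(G\)-invariant open subset \(U_t\subseteq G^0\) that is the image of the bisection \(G_1\cap G_t\subseteq G\). Thus \(\tilde A\) is contained in the \(\Cst\)-subalgebra of \(\bid A\) generated by \(\Mult\bigl(\Cred(G_U,\B)\bigr)\) for \(G\)-invariant open \(U\subseteq G^0\), and I would aim to apply Theorem~\ref{the:E-faithful} with \(\mathcal I\) the lattice generated by these ideals \(I_{1,t}\) (which is a lattice of \(G\)-invariant ideals of \(A\), since the \(U_t\) are invariant). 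The criterion to verify is: for \(I,J\in\mathcal I\) with \(I\subseteq J\) and \(a\in J\), if \(\Lambda(a)\Lambda(J)\Hils\subseteq\Lambda(I)\Hils\) then \(a\in I\), where \(\Hils=\bigoplus_x L^2(G_x,\B)\). Here inner exactness of~\(\B\) over~\(G\) enters: an ideal \(\Cred(G_U,\B)\idealin\Cred(G,\B)\) coming from an invariant open \(U\) is, by Definition~\ref{def:inner_exact}, the kernel of the restriction map \(\Cred(G,\B)\onto\Cred(G_F,\B)\) with \(F=G^0\setminus U\); equivalently \(\Cred(G_U,\B)\cdot\Hils\) is exactly the closed span of \(L^2(G_x,\B)\) for \(x\in U\). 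With this identification of the support subspaces, the condition \(\Lambda(a)\Lambda(J)\Hils\subseteq\Lambda(I)\Hils\) forces the image of \(a\) in \(\Cred(G_{F_I},\B)\) to kill the corresponding direct summand, and since the regular representations at the fibres over \(F_I\) separate \(\Cred(G_{F_I},\B)\), this gives \(a\in I\). That is precisely the hypothesis of Theorem~\ref{the:E-faithful}, so \(\Lambda\) is \(E\)-faithful, \(\Ind\Lambda\) is faithful on \(A\rtimes_\red S\), and the quotient map \(A\rtimes_\red S\onto\Cred(G\rtimes S,\B)\) is an isomorphism.

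The main obstacle is the precise matching of the two sides at the level of Hilbert modules and support projections. Concretely: (i) one must check that the Fell bundle over~\(S\) built from \(\Cred(G_t,\B)\) really is the reduced version of the one built from \(\Cst(G_t,\B)\), i.e.\ that taking reduced completions commutes with the bimodule structure and the multiplication isomorphisms \(\Hilm_t\otimes_A\Hilm_u\cong\Hilm_{tu}\); this rests on the Morita--Rieffel equivalence statements for Fell bundle crossed products flagged before Lemma~\ref{lem:reduced_embed} and on the disintegration-theorem caveats there. (ii) One must verify that inducing \(\Lambda^G_x\) along \(\ell^2(S,\bid{\Cred(G,\B)})\) genuinely reproduces \(\Lambda^{G\rtimes S}_x\) — the cleanest route is to first establish it for the \emph{full} crossed product, using \(\Cst(G,\B)\rtimes S\cong\Cst(G\rtimes S,\B)\) together with the decomposition of \((G\rtimes S)_x\) as \(\bigsqcup_{h\in S_x/\sim_x}G_{\s(\eta_h)}\) from the proof of Lemma~\ref{lem:reduced_embed}, and then pass to reduced algebras — and to confirm that the weak conditional expectation \(E\) on \(\Cred(G,\B)\rtimes S\) is compatible with the canonical conditional expectation \(\Cred(G\rtimes S,\B)\to\tilde A\) afforded by restriction of sections to the unit bisection. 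Once these identifications are in place, the exactness hypothesis is used only through the single clean statement ``\(\Cred(G_U,\B)\cdot\Hils=\cspn\{L^2(G_x,\B):x\in U\}\) for \(G\)-invariant open~\(U\)'', and the rest is an application of Theorem~\ref{the:E-faithful}.
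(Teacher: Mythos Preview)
Your proposal is correct and follows essentially the same route as the paper: verify the hypothesis of Theorem~\ref{the:E-faithful} for the lattice of ideals \(\Cred(G_U,\B)\) coming from \(G\)\nb-invariant open \(U\subseteq G^0\), using inner exactness of~\(\B\), and then show that inducing the regular representations \(\Lambda^G_x\) yields the regular representations \(\Lambda^{G\rtimes S}_x\) (the paper isolates this second step as Lemma~\ref{lem:regular_E-faithful}, using the decomposition of \((G\rtimes S)_x\) from the proof of Lemma~\ref{lem:reduced_embed} exactly as you suggest).

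One small correction worth making in the write-up: the identification \(\Cred(G_U,\B)\cdot\Hils=\bigoplus_{x\in U}L^2(G_x,\B)\) holds \emph{without} inner exactness, simply because for \(x\in U\) the restriction of~\(\Lambda_x\) to \(\Cred(G_U,\B)\) is the nondegenerate regular representation of~\(G_U\) at~\(x\), while for \(x\notin U\) it vanishes.  Inner exactness enters precisely where you use it in your main paragraph --- to pass from \(\Lambda_x(a)=0\) for all \(x\in V\setminus U\) (equivalently, from \(a\mapsto 0\) in \(\Cred(G_{V\setminus U},\B)\)) to \(a\in\Cred(G_U,\B)\) --- and not for the support identification, contrary to your closing summary sentence.
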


\begin{proof}
  First we show that the regular representation
  \(\Lambda = \bigoplus_{x\in G^0}\Lambda_x\)
  of~\(\Cred(G,\B)\)
  on~\(L^2(G^x)\)
  is \(E\)\nb-faithful.

  Let~\(\mathcal{I}\)
  be the lattice of ideals of \(\Cred(G,\B)\)
  of the form \(\Cred(G_U,\B)\)
  for open, \(G\)\nb-invariant
  subsets \(U\subseteq G^0\);
  this is a lattice because
  \(\Cred(G_U,\B)\cap \Cred(G_V,\B) = \Cred(G_{U\cap V},\B)\)
  and \(\Cred(G_U,\B)+ \Cred(G_V,\B) = \Cred(G_{U\cup V},\B)\).
  The ideals~\(I_{1,t}\)
  in the construction of the conditional expectation~\(E\)
  belong to~\(\mathcal{I}\).
  Thus we may use the criterion in Theorem~\ref{the:E-faithful}.  If
  \(U\subseteq G^0\) is open and \(G\)\nb-invariant, then
  \[
  \Cred(G_U,\B) \cdot \Lambda = \bigoplus_{x\in U} \Lambda_x.
  \]
  Let \(U\subseteq V\subseteq G^0\)
  be open, \(G\)\nb-invariant
  subsets.  Then \(a\in\Cred(G_V,\B)\)
  satisfies
  \(\Lambda(a)\Lambda(\Cred(G_V,\B)) \subseteq
  \Lambda(\Cred(G_U,\B))\)
  if and only if \(\Lambda_x(a)=0\)
  for all \(x\in V\setminus U\).
  This means that~\(a\)
  is mapped to~\(0\)
  in \(\Cred(G_{V\setminus U},\B)\).
  Since~\(\B\)
  is inner exact,
  this implies \(a\in\Cred(G_U,\B)\).
  Thus~\(\Lambda\)
  verifies the criterion in Theorem~\ref{the:E-faithful} and is
  \(E\)\nb-faithful.  Now the following lemma finishes the proof.
\end{proof}

\begin{lemma}
  \label{lem:regular_E-faithful}
  If the family of regular representations of~\(\Cred(G,\B)\)
  is \(E\)\nb-faithful,
  then \(\Cred(G,\B)\rtimes_\red S \cong \Cred(G\rtimes S,\B)\).
\end{lemma}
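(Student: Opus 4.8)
The isomorphism will be the canonical quotient map $q\colon \Cred(G,\B)\rtimes_\red S \to \Cred(G\rtimes S,\B)$; writing $A\defeq\Cred(G,\B)$, the plan is to show that the hypothesis forces $q$ to be injective, surjectivity being automatic. Both the existence of $q$ and its injectivity rest on a single fact: for each $x\in G^0$ the induced representation $\Ind\Lambda^G_x$ of $A\rtimes_\red S$ on $\ell^2(S,\bid A)\otimes_{\bid{\Lambda^G_x}} L^2(G_x,\B)$ is unitarily equivalent to the regular representation $\Lambda^{G\rtimes S}_x$ of $\Cred(G\rtimes S,\B)$ on $L^2((G\rtimes S)_x,\B)$. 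Granting this, and using that induction commutes with direct sums of representations, $\bigoplus_{x\in G^0}\Lambda^{G\rtimes S}_x$ is an induced representation of $A\rtimes S$, hence factors through $A\rtimes_\red S$ by the definition of the reduced crossed product; since its image is by definition $\Cred(G\rtimes S,\B)$, this yields $q$, and $\Sect(G\rtimes S,\B)$ lies in its image, so the $\Star$homomorphism $q$ is surjective.

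For injectivity I would invoke Proposition~\ref{pro:E-faithful}: by hypothesis the family $(\Lambda^G_x)_{x\in G^0}$ is $E$\nb-faithful, so $\Ind\bigl(\bigoplus_x \Lambda^G_x\bigr) = \bigoplus_x \Ind\Lambda^G_x$ is a \emph{faithful} representation of $A\rtimes_\red S$. By the fact above it is unitarily equivalent to $\bigoplus_x \Lambda^{G\rtimes S}_x$, which equals the composition of $q$ with the defining faithful representation of $\Cred(G\rtimes S,\B)$ on $\bigoplus_x L^2((G\rtimes S)_x,\B)$. A faithful representation of $A\rtimes_\red S$ that factors through $q$ can only do so if $q$ is injective; hence $q$ is an isomorphism. (Together with the proof of Theorem~\ref{the:iterated_crossed_1}, which supplies precisely the $E$\nb-faithfulness of the family of regular representations, this completes that theorem.)

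The main obstacle is the fact $\Ind\Lambda^G_x \cong \Lambda^{G\rtimes S}_x$. I would establish it by building an explicit unitary $U\colon \ell^2(S,\bid A)\otimes_{\bid{\Lambda^G_x}} L^2(G_x,\B) \congto L^2((G\rtimes S)_x,\B)$. The decomposition of $(G\rtimes S)_x$ into a disjoint union of source fibres of $G$ from the proof of Lemma~\ref{lem:reduced_embed} identifies the target with a direct sum of copies of spaces $L^2(G_y,\B)$ indexed by the equivalence classes in $S_x$, while the source is the closed linear span of the elementary tensors $\xi\delta_t\otimes v$ with $t\in S$, $\xi\in\Sect(G_t,\B)$, $v\in L^2(G_x,\B)$. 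I would let $U$ send $\xi\delta_t\otimes v$ to the convolution of $\xi$ --- regarded as a quasi-continuous section of $\B$ over the bisection $G_t\subseteq G\rtimes S$ --- with $v$, landing in the summand indexed by the class of the unit. What makes $U$ isometric is that the inner product on $\ell^2(S,\bid A)$ is given by the weak conditional expectation, for which the proof of Proposition~\ref{pro:E_positive} records the formula $E\bigl((\zeta\delta_t)^*\cdot\eta\delta_u\bigr)=\zeta^*\cdot\Theta_{t,u}(\eta)$; this is exactly the groupoid inner product of the corresponding convolution vectors, because $\Theta_{t,u}$ restricts to the gluing isomorphism on the overlap $G_t\cap G_u=\bigcup_{v\le t,u}G_v$. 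One then checks that $U$ kills the defining relations $\theta_{u,t}(\xi)\delta_u-\xi\delta_t$ of $A\rtimes_\alg S$ (again the overlap-gluing relations, cf.\ Proposition~\ref{pro:crossed_product_section_algebra}), that it has dense range, and that it intertwines the left multiplication actions of $A\rtimes_\alg S$. Some care is needed because $L^2(G_x,\B)$ is a Hilbert $\B_x$\nb-module rather than a Hilbert space, so one argues with the normal extensions and self-dual completions of Section~\ref{sec:act_Wstar}, and because $G$ is only locally Hausdorff, so one works throughout with quasi-continuous sections as in Section~\ref{sec:iterated_crossed}; these points are routine once the algebraic core above is in place.
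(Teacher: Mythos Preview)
Your approach is essentially the paper's own: both hinge on the identification \(\Ind\Lambda^G_x\cong\Lambda^{G\rtimes S}_x\) (the paper appeals to ``as in the proof of Lemma~\ref{lem:reduced_embed}'' for this, while you spell out the unitary), then invoke Proposition~\ref{pro:E-faithful} on the \(E\)\nb-faithful family to conclude that the two reduced norms on \(\Cst(G,\B)\rtimes S\cong\Cst(G\rtimes S,\B)\) coincide. One small slip: the convolution \(\xi*v\) with \(\xi\in\Sect(G_t,\B)\) lands in the summand of \(L^2((G\rtimes S)_x,\B)\) indexed by the class of~\(t\) in \(S_x/{\sim_x}\), not the unit class; this does not affect the argument.
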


\begin{proof}
  The isomorphism \(\Cst(G,\B)\rtimes S \cong \Cst(G\rtimes S,\B)\)
  is proved in~\cite{Buss-Meyer:Actions_groupoids}.  The sum of the
  induced representations \(\Ind\Lambda_x\)
  for \(x\in G^0\)
  is a faithful representation of the quotient
  \(\Cred(G,\B)\rtimes_\red S\)
  of \(\Cst(G,\B)\rtimes S\)
  by Proposition~\ref{pro:E-faithful} and our assumption.
  As in the proof
  of Lemma~\ref{lem:reduced_embed}, the induced
  representation \(\Ind \Lambda_x\)
  is the regular representation of
  \(\Cst(G,\B)\rtimes S\cong \Cst(G\rtimes S,\B)\)
  on \(L^2((G\rtimes S)^x,\mu^x,\B)\),
  where~\(\mu\)
  is the unique Haar system on~\(G\rtimes S\)
  extending the given Haar system on~\(G\)
  (see \cite{Buss-Meyer:Actions_groupoids}*{Proposition~5.1}).  Hence
  the reduced norm that gives \(\Cred(G\rtimes S,\B)\)
  is defined by the same family of representations that gives the norm
  on \(\Cred(G,\B)\rtimes_\red S\).
\end{proof}

\begin{corollary}
  \label{cor:exact=>iterated-crossed-product}
  If~\(G\)
  is inner exact, then
  \(\Cred(G)\rtimes_\red S\cong \Cred(G\rtimes S)\).
\end{corollary}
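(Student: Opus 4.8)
The plan is to deduce this as the special case of Theorem~\ref{the:iterated_crossed_1} in which the Fell bundle is trivial. First I would let $\B \defeq \C \times (G\rtimes S)^1$ denote the trivial complex line bundle over the transformation groupoid $G\rtimes S$, equipped with fibrewise multiplication; this is a Fell line bundle whose full and reduced section \(\Cst\)\nb-algebras are $\Cst(G\rtimes S)$ and $\Cred(G\rtimes S)$ by definition. Restricting $\B$ to the open subgroupoid $G\cong G_1$ yields again the trivial line bundle over $G$, so that $\Cst(G,\B)=\Cst(G)$ and $\Cred(G,\B)=\Cred(G)$, and the $S$\nb-action on $\Cred(G,\B)$ by Hilbert bimodules built at the beginning of Section~\ref{sec:iterated_crossed} is precisely the action appearing in the statement of the corollary.

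Next I would check that the hypothesis ``$\B$ is inner exact over $G$'' of Theorem~\ref{the:iterated_crossed_1} amounts, in this case, to nothing more than the assumption that $G$ is inner exact. Indeed, ``$\B$ inner exact over $G$'' means that the restriction of $\B$ to $G$ makes the sequence \(\Cred(G_U,\B)\into \Cred(G,\B)\onto \Cred(G_F,\B)\) exact for every $G$\nb-invariant open subset $U\subseteq G^0$ (with $F = G^0\setminus G\cdot U$); as observed above this restriction is the trivial line bundle over $G$, and for the trivial bundle this is exactly the defining condition of inner exactness of~$G$ in Definition~\ref{def:inner_exact}. Hence Theorem~\ref{the:iterated_crossed_1} applies and gives
\[
\Cred(G)\rtimes_\red S = \Cred(G,\B)\rtimes_\red S \cong \Cred(G\rtimes S,\B) = \Cred(G\rtimes S).
\]

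I do not expect a genuine obstacle here: the argument is a direct specialisation of Theorem~\ref{the:iterated_crossed_1}, and the only points that need (routine) verification are the identification of the trivial Fell bundle over $G\rtimes S$ with its restriction to~$G$ and the matching of the two descriptions of the induced $S$\nb-action on $\Cred(G)$, both of which are immediate from the constructions recalled in Section~\ref{sec:iterated_crossed}.
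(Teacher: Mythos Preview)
Your proposal is correct and is exactly the intended argument: the paper states the corollary immediately after Theorem~\ref{the:iterated_crossed_1} without proof, precisely because it is the special case of that theorem in which~\(\B\) is the trivial Fell line bundle, for which Definition~\ref{def:inner_exact} makes ``\(\B\) inner exact over~\(G\)'' synonymous with ``\(G\) inner exact.'' Your identifications \(\Cred(G,\B)=\Cred(G)\) and \(\Cred(G\rtimes S,\B)=\Cred(G\rtimes S)\) are immediate, and the rest is Theorem~\ref{the:iterated_crossed_1}.
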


The criteria above are not optimal.  Of course, it suffices to require
inner exactness only for the lattice generated by the open
\(G\)\nb-invariant
subsets of~\(G^0\)
corresponding to the ideals~\(I_{1,t}\).
A more serious limitation of our proof is that we only use
\(E(\Hilm_t)\subseteq \Mult(I_{1,t})\).
For instance, it does not use that \(E(\Hilm_t)\subseteq A\)
if~\(t\)
is idempotent.  This is why the following theorem is not a special
case of Theorem~\ref{the:iterated_crossed_1}:

\begin{theorem}
  \label{the:conditional_expect_groupoid_crossed}
  Let~\(G\)
  be a locally compact, locally Hausdorff groupoid with an action of a
  unital inverse semigroup~\(S\),
  and let~\(\B\)
  be a Fell bundle over the transformation groupoid~\(G\rtimes S\).
  If~\(G\)
  is closed in~\(G\rtimes S\),
  then the canonical conditional expectation
  \(\Cred(G,\B)\rtimes_\red S \to \bid{\Cred(G,\B)}\)
  takes values in~\(\Cred(G,\B)\)
  and \(\Cred(G,\B)\rtimes_\red S \cong \Cred(G\rtimes S,\B)\).
\end{theorem}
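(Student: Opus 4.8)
The plan is to set $A\defeq\Cred(G,\B)$, equipped with the $S$\nb-action by the Hilbert bimodules $\Hilm_t\defeq\Cred(G_t,\B)$, and to obtain both assertions from Proposition~\ref{pro:A_tilde_A} together with Lemma~\ref{lem:regular_E-faithful}. First I would identify the two ideals appearing in Proposition~\ref{pro:A_tilde_A}. Since $\Hilm_t=\Cred(G_t,\B)$ is a full Hilbert bimodule whose right coefficient algebra is $\Cred(G_{t^*t},\B)$, the ideal $\s(\Hilm_t)\idealin A$ equals $\Cred(G_{V_t},\B)$, where $V_t\defeq\s(G_t)\subseteq G^0$ is the $G$\nb-invariant open subset corresponding to this ideal. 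The ideal $I_{1,t}$ is generated by the $\s(\Hilm_e)=\Cred(G_e,\B)$ for idempotent $e\le t$; using that sums of the ideals $\Cred(G_U,\B)$ correspond to unions of $G$\nb-invariant open subsets (as in the proof of Theorem~\ref{the:iterated_crossed_1}), it equals $\Cred(G_{W_t},\B)$ for the $G$\nb-invariant open subset $W_t\defeq\bigcup_{e\le t}\s(G_e)\subseteq V_t$.

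The key step is to show that \emph{$W_t$ is relatively closed in $V_t$ for every $t\in S$} whenever $G=G_1$ is closed in $G\rtimes S$. Here I would argue as follows. For an idempotent $e\le t$, write $e=tf$ with $f\defeq t^*e\le t^*t$, which is again idempotent; then $G_e=G_{tf}=G_t\cdot G_f$, and since $f$ is idempotent, $G_f$ is the restriction $G|_{Y_f}$ of $G$ to the invariant open set $Y_f\defeq\s(G_f)$. A short computation then gives $G_e=\{g\in G_t\mid \s(g)\in Y_f\}$ with $\s(G_e)=Y_f$, so that $G_1\cap G_t=\bigcup_{e\le t}G_e=(\s|_{G_t})^{-1}(W_t)$. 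Because $G\rtimes S$ carries a Haar system (the unique one extending that of $G$), its source map is open; restricting it to the open subset $G_t$ produces an open continuous surjection $\s|_{G_t}\colon G_t\to V_t$, hence a quotient map. Since $G_1$ is closed in $G\rtimes S$, the preimage $(\s|_{G_t})^{-1}(W_t)=G_1\cap G_t$ is closed in $G_t$; by the quotient property $W_t$ is closed in $V_t$, and it is open as a union of the $\s(G_e)$.

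Granting this, $V_t=W_t\sqcup(V_t\setminus W_t)$ is a decomposition into subsets that are clopen in $V_t$ and $G$\nb-invariant, so $G_{V_t}$ is the disjoint union of the groupoids $G_{W_t}$ and $G_{V_t\setminus W_t}$ and therefore $\s(\Hilm_t)=\Cred(G_{V_t},\B)=\Cred(G_{W_t},\B)\oplus\Cred(G_{V_t\setminus W_t},\B)=I_{1,t}\oplus I_{1,t}^\bot$. Thus $I_{1,t}$ is complemented in $\s(\Hilm_t)$ for every $t$, and Proposition~\ref{pro:A_tilde_A} yields $\tilde A=A$; equivalently, $E$ maps $A\rtimes_\red S$ into $A=\Cred(G,\B)$, which is the first assertion. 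For the second, note that $\tilde A=A$ makes $\tilde\pi=\pi$ for every representation $\pi$ of $A$, so every faithful family of representations of $A$ is automatically $E$\nb-faithful. In particular the family of regular representations $(\Lambda_x)_{x\in G^0}$ is $E$\nb-faithful, since $\bigoplus_{x}\Lambda_x$ is faithful on $\Cred(G,\B)$ by definition; Lemma~\ref{lem:regular_E-faithful} then gives $\Cred(G,\B)\rtimes_\red S\cong\Cred(G\rtimes S,\B)$.

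The main obstacle is the closedness claim for $W_t$ in $V_t$, and within it the reduction identity $G_e=(\s|_{G_t})^{-1}(\s(G_e))$: it is exactly here that the internal structure of the transformation groupoid $G\rtimes S$, and the behaviour of its source map in the locally Hausdorff, non-étale setting, must be controlled. Once that is in place, the rest is bookkeeping with results already established in the paper.
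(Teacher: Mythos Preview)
Your argument is correct, but it takes a genuinely different route from the paper's proof. The paper argues directly at the level of sections: on the dense \Star{}subalgebra \(\Sect(G\rtimes S,\B)\subseteq \Cst(G,\B)\rtimes_\alg S\), the weak conditional expectation~\(E\) is simply restriction of sections from~\(G\rtimes S\) to~\(G\); when~\(G\) is closed in~\(G\rtimes S\), this restriction sends quasi-continuous compactly supported sections to quasi-continuous compactly supported sections, so \(E(\Sect(G\rtimes S,\B))\subseteq \Sect(G,\B)\), and by continuity \(E(A\rtimes_\red S)\subseteq A\). This is a two-line argument that bypasses Proposition~\ref{pro:A_tilde_A} entirely.

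Your approach instead passes through the abstract criterion of Proposition~\ref{pro:A_tilde_A}, translating the closedness of~\(G\) in~\(G\rtimes S\) into the statement that \(W_t=\bigcup_{e\le t}\s(G_e)\) is relatively closed in~\(V_t=\s(G_t)\), via the identification \(G_1\cap G_t=(\s|_{G_t})^{-1}(W_t)\) and the quotient property of the open source map. This is longer but has the virtue of showing that Proposition~\ref{pro:A_tilde_A} is sharp enough to recover the result, and it makes the connection to the ideal structure of~\(\Cred(G,\B)\) explicit. The paper's direct approach, by contrast, never needs to identify~\(I_{1,t}\) or invoke the complementation criterion: it exploits the concrete model of~\(E\) as restriction, which is special to the groupoid setting. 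Both proofs finish identically via Lemma~\ref{lem:regular_E-faithful}.
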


\begin{proof}
  The conditional expectation~\(E\)
  for the \(S\)\nb-action
  on~\(\Cst(G,\B)\)
  on the dense subalgebra
  \(\Sect(G\rtimes S,\B) \subseteq \Cst(G,\B)\rtimes_\alg S\)
  simply restricts a function on~\(G\rtimes S\)
  to~\(G\).
  This is a map to~\(\Sect(G,\B)\)
  if~\(G\)
  is closed in~\(G\rtimes S\).  (This works also for non-saturated
  Fell bundles, as considered
  in~\cite{Deaconu-Kumjian-Ramazan:Fell_groupoid_morphism}.)
  If~\(G\) is closed in~\(G\rtimes S\),
  then \(E(\Cred(G,\B)\rtimes_\red S)\subseteq \Cred(G,\B)\)
  and therefore any faithful representation of~\(\Cred(G,\B)\)
  is \(E\)\nb-faithful.
  Now Lemma~\ref{lem:regular_E-faithful} finishes the proof.
\end{proof}

\begin{remark}
  Conversely, if~\(G\)
  is not closed in~\(G\rtimes S\),
  then \(\tilde{A}\neq A\)
  because we may find \(\xi\in\Sect(G\rtimes S)\)
  for which \(E(\xi)\)
  lives on a single bisection of~\(G\)
  and is not continuous.  And such a function cannot belong
  to~\(\Cred(G,\B)\).
\end{remark}

\begin{remark}
  \label{rem:G_not_closed_but_Hausdorff}
  It may happen that~\(G\) is not closed in~\(G\rtimes S\) although
  \(G\rtimes S\) is Hausdorff.  For instance, let~\(S\) be the
  inverse semigroup of bisections of a non-Hausdorff \'etale
  groupoid~\(H\) with Hausdorff, locally compact unit space~\(H^0\),
  and let~\(G\) be the \v{C}ech groupoid of a Hausdorff open cover
  of the arrow space of~\(H\) (see
  \cite{Buss-Meyer:Actions_groupoids}*{Example~A.9}).  A canonical
  action of~\(S\) on~\(G\) that corresponds to the left translation
  action of a groupoid on its arrow space is described
  in~\cite{Buss-Meyer:Actions_groupoids}*{Corollary~3.21}, and it is
  shown that the transformation groupoid~\(G\rtimes S\) is Morita
  equivalent to the space~\(H^0\), which is the orbit space of the
  action of~\(H\) on~\(H^1\) by left multiplication.  Thus
  \(G\rtimes S\) is a free and proper groupoid, forcing it to have
  Hausdorff arrow space (see
  \cite{Buss-Meyer:Actions_groupoids}*{Proposition~A.7}).
  There is a canonical open and continuous functor \(G\rtimes S\to
  H\), such that the arrow space of~\(G\) is the preimage of the
  unit subspace \(H^0\subseteq H^1\).  Since \(H^0\subseteq H^1\) is
  open, but not closed, the preimage \(G^1\subseteq (G\rtimes S)^1\)
  is open, but not closed.

  Theorem~\ref{the:conditional_expect_groupoid_crossed} does not
  apply to this example.  But Theorem~\ref{the:iterated_crossed_1}
  does.  Indeed, the \v{C}ech groupoid~\(G\) is amenable, say,
  because it is étale and its \cstar{}algebra is nuclear (see
  \cite{Brown-Ozawa:Approximations}*{Theorem~5.6.18}).  Hence it is
  inner exact.  The transformation groupoid \(G\rtimes S\) is also a
  \v{C}ech groupoid and hence amenable; even more, it is equivalent
  to the space~\(H^0\).  Hence Theorem~\ref{the:iterated_crossed_1}
  (or Corollary~\ref{cor:exact=>iterated-crossed-product}) implies
  \(\Cred(G)\rtimes_\red S\cong \Cred(G\rtimes S) \cong
  \Cst(G\rtimes S)\sim \Cont_0(H^0)\).  The isomorphism
  \(\Cst(G)\rtimes S \cong \Cst(G\rtimes S)\) is shown
  in~\cite{Buss-Meyer:Actions_groupoids}.  Hence we get
  \(\Cst(G)\rtimes S \cong \Cred(G)\rtimes_\red S\) in this example.
\end{remark}

\begin{bibdiv}
  \begin{biblist}
    \bibselect{references}
  \end{biblist}
\end{bibdiv}
\end{document}